\newtheorem{thm}{Theorem}
\newtheorem{prop}[thm]{Proposition}
\newtheorem{lem}[thm]{Lemma}
\newtheorem{cor}[thm]{Corollary}
\newtheorem{rem}[thm]{Remark}
\newtheorem{df}[thm]{Definition}
\renewcommand{\epsilon}{\varepsilon}
\renewcommand{\phi}{\varphi}
\renewcommand{\deg}{\operatorname{deg}}
\newcommand{\BB}{\mathbb}
\newcommand{\tr}{\operatorname{Tr}}
\newcommand{\HC}{\BB H_{\BB C}}
\newcommand{\ddd}{\operatorname{D}_n}
\begin{document}

\title{\bf $n$-Regular Functions in Quaternionic Analysis}
\author{Igor Frenkel and Matvei Libine}
\maketitle

\begin{abstract}
In this paper we study left and right $n$-regular functions that originally
were introduced in \cite{FL4}.
When $n=1$, these functions are the usual quaternionic left and right regular
functions.
We show that $n$-regular functions satisfy most of the properties of
the usual regular functions, including
the conformal invariance under the fractional linear transformations
by the conformal group and the Cauchy-Fueter type reproducing formulas.
Arguably, these Cauchy-Fueter type reproducing formulas for $n$-regular
functions are quaternionic analogues of Cauchy's integral formula for the
$n$-th order pole
$$
f^{(n-1)}(w) = \frac{(n-1)!}{2\pi i} \oint \frac {f(z)\,dz}{(z-w)^n}.
$$
We also find two expansions of the Cauchy-Fueter kernel for $n$-regular
functions in terms of certain basis functions,
we give an analogue of Laurent series expansion for $n$-regular functions,
we construct an invariant pairing between left and right $n$-regular functions
and we describe the irreducible representations associated to the spaces of
left and right $n$-regular functions of the conformal group and its Lie algebra.

\end{abstract}

\section{Introduction}

The foundational result in quaternionic analysis is the Cauchy-Fueter integral
formulas for left and right regular functions. Thus it is natural to ask about
quaternionic analogue of Cauchy's integral formula for the $n$-th order pole
for all positive integers $n$
\begin{equation}  \label{Cauchy}
f^{(n-1)}(w) = \frac{(n-1)!}{2\pi i} \oint \frac {f(z)\,dz}{(z-w)^n}.
\end{equation}
For $n=2$ we suggested an answer to this question in \cite{FL1} with the
first derivative replaced by the Maxwell equations in the quaternionic case.
In a recent paper \cite{FL4} we proposed a different quaternionic counterpart
of (\ref{Cauchy}), for general $n$, introducing left and right $n$-regular
functions.
For $n=1$ we get the usual left and right regular functions;
when $n=2$ we call them doubly left and right regular functions,
and in the doubly regular case the first derivative in (\ref{Cauchy})
is replaced by the degree operator plus two.

In this paper we study left and right $n$-regular functions in more detail.
Let $n$ be a positive integer and $V_{\frac n2}$ the irreducible representation
of $SU(2)$ of dimension $n+1$.
Then $n$-regular functions are functions on the space of quaternions $\BB H$
or $\BB H^{\times} = \BB H \setminus \{0\}$ with values in $V_{\frac n2}$
and satisfying $n$ regularity conditions.
The spaces of left and right $n$-regular functions form the most degenerate
series of unitary representations of $SU(2,2)$ that are often called the
spin $\frac{n}2$ representations of positive and negative helicities and
play important role in physics.
In the context of quaternionic analysis, $n$-regular functions first appeared
briefly in \cite{FL4}.
In the present paper we study these spaces in more detail as natural
generalizations of quaternionic regular functions.
The spaces of $n$-regular functions provide a class of irreducible
representations of the conformal group that were considered before,
for example, by H.~P.~Jakobsen and M.~Vergne in \cite{JV1} and in a more
general case by S.~T.~Lee \cite{Le}.

We show that $n$-regular functions satisfy most of the properties of
the usual regular functions and doubly regular functions, including
the conformal invariance under the fractional linear transformations
by the conformal group $SL(2,\HC) \simeq SL(4,\BB C)$ and the
Cauchy-Fueter type reproducing formulas.
Arguably, these Cauchy-Fueter type reproducing formulas for $n$-regular
functions are quaternionic analogues of Cauchy's integral formula for the
$n$-th order pole (\ref{Cauchy}).
%$$
%f^{(n-1)}(w) = \frac{(n-1)!}{2\pi i} \oint \frac {f(z)\,dz}{(z-w)^n}.
%$$
We also study in detail the $K$-type bases of the spaces of $n$-regular
functions, the duality between left and right regular functions and
the $\mathfrak{u}(2,2)$-invariant inner products on these spaces.

In \cite{FL4} we constructed an algebra of quaternionic functions using the
product of spaces of left and right regular functions.
We expect this construction to have a straightforward generalization to
$n$-regular functions,
thus yielding an explicit realization of an infinite family of certain
non-highest, non-lowest weight representations of the conformal group,
parametrized by positive integers $n$ with an intrinsic algebra structure.

The conformal groups of the quaternions, Minkowski space and split quaternions
are locally isomorphic to $SO(5,1)$, $SO(4,2)$ and $SO(3,3)$ respectively.
Thus our constructions complement a thorough study of minimal representations
of the indefinite orthogonal groups $O(p,q)$ by T.~Kobayashi and B.~{\O}rsted
\cite{KobO} (see references therein for the previous work on this subject).
Their work also uses the space of solutions of the ultrahyperbolic wave
equation on $\BB R^{p-1,q-1}$ to give concrete realizations of these minimal
representations.
Many results of quaternionic analysis extend to higher dimensions in the form
of Clifford analysis (see, for example, \cite{BDS, DSS, R}).
It is interesting to see how results of this paper are generalized to
Clifford analysis.

The paper is organized as follows. In Section \ref{def-section}
we define left and right $n$-regular functions
(Definitions \ref{nr-definition}, \ref{nr-definition-C}) and prove conformal
invariance under the fractional linear transformations by group $GL(2,\HC)$
(Theorem \ref{dr-action-thm}).
In Section \ref{Cauchy-Fueter-section} we derive the Cauchy-Fueter type
formulas for $n$-regular functions (Theorem \ref{Fueter-n-reg}).
In Section \ref{expansion-section} we find two expansions of the
Cauchy-Fueter kernel $k_{n/2}(Z-W)$ for $n$-regular functions
in terms of certain basis functions $F^{(n)}_{l,\mu,\nu}(Z)$,
$F'^{(n)}_{l,\mu,\nu}(Z)$, $G^{(n)}_{l,\mu,\nu}(Z)$ and $G'^{(n)}_{l,\mu,\nu}(Z)$
(Proposition \ref{expansion-prop}).
In Section \ref{deg-inverse-section} we prove a technical result that a certain
differential operator $\ddd$ that enters the Cauchy-Fueter formulas for
$n$-regular functions can be inverted for (left or right) $n$-regular functions
defined on all of $\BB H^{\times}$ (Proposition \ref{ddd-inverse-prop}).
We also give an analogue of Laurent series expansion for $n$-regular functions
on $\BB H^{\times}$ (Corollary \ref{Laurent-expansion}).
In Section \ref{bilinear-pairing-section} we use the technical result from
the previous section to construct a $\mathfrak{gl}(2,\HC)$-invariant pairing
(\ref{pairing1}) between left and right $n$-regular functions on
$\BB H^{\times}$.
We also prove orthogonality relations between the basis functions
$F^{(n)}_{l,\mu,\nu}(Z)$, $F'^{(n)}_{l,\mu,\nu}(Z)$, $G^{(n)}_{l,\mu,\nu}(Z)$ and
$G'^{(n)}_{l,\mu,\nu}(Z)$ (Proposition \ref{orthogonality-nreg-prop}).
In Section \ref{Harish-Chandra-section} we consider
$\mathfrak{gl}(2,\HC)$-modules
\begin{center}
\begin{tabular}{lcl}
${\cal F}_n^+ = \BB C\text{-span of } \bigl\{ F^{(n)}_{l,\mu,\nu}(Z) \bigr\}$,
& \qquad &
${\cal F}_n^- = \BB C\text{-span of } \bigl\{ F'^{(n)}_{l,\mu,\nu}(Z) \bigr\}$, \\
${\cal G}_n^+ = \BB C\text{-span of } \bigl\{ G^{(n)}_{l,\mu,\nu}(Z) \bigr\}$,
& \qquad &
${\cal G}_n^- = \BB C\text{-span of } \bigl\{ G'^{(n)}_{l,\mu,\nu}(Z) \bigr\}$
\end{tabular}
\end{center}
associated to left and right $n$-regular functions.
We prove that these $\mathfrak{gl}(2,\HC)$-modules are irreducible
(Theorem \ref{irreducible-thm}) and
identify their $K$-types, where $K$ is the maximal compact subgroup
$U(2) \times U(2)$ of $U(2,2)$ (Proposition \ref{K-types-prop}).
In Section \ref{unitary-section} we give explicit descriptions of the
$\mathfrak{u}(2,2)$-invariant inner products on the
$\mathfrak{gl}(2,\HC)$-modules ${\cal F}_n^{\pm}$ and ${\cal G}_n^{\pm}$
(Theorem \ref{unitary-thm}).

Since this paper is a continuation of \cite{FL1,FL3,FL4},
we follow the same notations and instead of introducing those
notations again we direct the reader to Section 2 of \cite{FL3}.

\section{Definitions and Conformal Invariance}  \label{def-section}

We continue to use notations established in \cite{FL1}.
In particular, $e_0$, $e_1$, $e_2$, $e_3$ denote the units of the classical
quaternions $\BB H$ corresponding to the more familiar $1$, $i$, $j$, $k$
(we reserve the symbol $i$ for $\sqrt{-1} \in \BB C$).
Thus $\BB H$ is an algebra over $\BB R$ generated by $e_0$, $e_1$, $e_2$, $e_3$,
and the multiplicative structure is determined by the rules
$$
e_0 e_i = e_i e_0 = e_i, \qquad
(e_i)^2 = e_1e_2e_3 = -e_0, \qquad
e_ie_j=-e_je_i, \qquad 1 \le i< j \le 3,
$$
and the fact that $\BB H$ is a division ring.
Next we consider the algebra of complexified quaternions
(also known as biquaternions) $\HC = \BB C \otimes_{\BB R} \BB H$ and
write elements of $\HC$ as
$$
Z = z^0e_0 + z^1e_1 + z^2e_2 + z^3e_3, \qquad z^0,z^1,z^2,z^3 \in \BB C,
$$
so that $Z \in \BB H$ if and only if $z^0,z^1,z^2,z^3 \in \BB R$:
$$
\BB H = \{ X = x^0e_0 + x^1e_1 + x^2e_2 + x^3e_3; \: x^0,x^1,x^2,x^3 \in \BB R \}.
$$
For $Z = z^0e_0 + z^1e_1 + z^2e_2 + z^3e_3 \in \HC$, we use
$$
Z^+ = z^0e_0 - z^1e_1 - z^2e_2 - z^3e_3
$$
to denote the quaternionic conjugation and
$$
N(Z) = Z^+Z = ZZ^+ = (z^0)^2 + (z^1)^2 + (z^2)^2 + (z^3)^2 \in \BB C
$$
to denote the quadratic norm.
Let
$$
\BB H^{\times} = \BB H \setminus \{0\} \qquad \text{and} \qquad
\HC^{\times} = \{ Z \in \HC;\: N(Z) \ne 0 \}
$$
be the sets of invertible elements in $\BB H$ and $\HC$ respectively.
The algebra $\HC$ can be naturally identified with the algebra of
$2 \times 2$ matrices with complex entries.
Recall that we denote by $\BB S$ (respectively $\BB S'$)
the irreducible 2-dimensional left (respectively right) $\HC$-module,
as described in Subsection 2.3 of \cite{FL1}.
The spaces $\BB S$ and $\BB S'$ can be realized as respectively
columns and rows of complex numbers.
Then
\begin{equation}  \label{SotimesS}
\BB S \otimes \BB S' \simeq \HC.
\end{equation}
Note that $\BB S \otimes \BB S$ and $\BB S' \otimes \BB S'$
are respectively left and right modules over $\HC \otimes \HC$.

Fix a positive integer $n$. The $n$-fold tensor product
$$
\underbrace{\BB S \otimes \cdots \otimes \BB S}_{\text{$n$ times}}
$$
is a left module over
\begin{equation}  \label{HC-n-tensor}
\underbrace{\HC \otimes \cdots \otimes \HC}_{\text{$n$ times}}
\end{equation}
and contains the $n$-fold symmetric product which we denote
$$
\underbrace{\BB S \odot \cdots \odot \BB S}_{\text{$n$ times}}
$$
as a subspace. Similarly, the $n$-fold tensor product
$$
\underbrace{\BB S' \otimes \cdots \otimes \BB S'}_{\text{$n$ times}}
$$
is a right module over (\ref{HC-n-tensor}) and contains the subspace of
$n$-fold symmetric tensors
$$
\underbrace{\BB S' \odot \cdots \odot \BB S'}_{\text{$n$ times}}.
$$
We have a natural bilinear pairing between $\BB S'$ and $\BB S$:
$$
\BB S' \times \BB S \to \BB C, \qquad
(s'_1,s'_2) \times \begin{pmatrix} s_1 \\ s_2 \end{pmatrix} \mapsto
(s'_1,s'_2) \begin{pmatrix} s_1 \\ s_2 \end{pmatrix} = s'_1s_1 + s'_2s_2.
$$
This pairing extends to a multilinear pairing
$$
\Bigl( \underbrace{\BB S' \times \cdots \times \BB S'}_{\text{$n$ times}} \Bigr)
\times
\Bigl( \underbrace{\BB S \times \cdots \times \BB S}_{\text{$n$ times}} \Bigr)
\to \BB C
$$
by taking the product of pairings of the respective components, and then to
\begin{equation}  \label{S-pairing}
\Bigl( \underbrace{\BB S' \otimes \cdots \otimes \BB S'}_{\text{$n$ times}} \Bigr)
\times
\Bigl( \underbrace{\BB S \otimes \cdots \otimes \BB S}_{\text{$n$ times}} \Bigr)
\to \BB C
\end{equation}
by multilinearity.
For convenience, we restate Lemma 5 from \cite{FL4}:

\begin{lem}  \label{antisymmetric}
Let $t \in \underbrace{\BB S \odot \cdots \odot \BB S}_{\text{$n$ times}}$ and
$t' \in \underbrace{\BB S' \odot \cdots \odot \BB S'}_{\text{$n$ times}}$,
then, for any $1 \le j,k \le n$, $j \ne k$,
$$
\sum_{i=0}^3 (1 \otimes \cdots
\underset{\text{$j$-th place}}{\otimes e_i \otimes} \cdots 
\underset{\text{$k$-th place}}{\otimes e_i \otimes} \cdots \otimes 1)t =0 \quad
\text{in $\underbrace{\BB S \otimes \cdots \otimes \BB S}_{\text{$n$ times}}$}
$$
and
$$
\sum_{i=0}^3 t'(1 \otimes \cdots
\underset{\text{$j$-th place}}{\otimes e_i \otimes} \cdots 
\underset{\text{$k$-th place}}{\otimes e_i \otimes} \cdots \otimes 1) =0 \quad
\text{in $\underbrace{\BB S' \otimes \cdots \otimes \BB S'}_{\text{$n$ times}}$}.
$$
\end{lem}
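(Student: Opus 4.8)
The plan is to reduce the full $n$-fold statement to a two-factor identity and then settle that identity by a short computation with the quaternion units. First I would note that the displayed operator, say $A_{jk} = \sum_{i=0}^3 (1 \otimes \cdots \otimes e_i \otimes \cdots \otimes e_i \otimes \cdots \otimes 1)$ with $e_i$ inserted in the $j$-th and $k$-th slots, acts as the identity on every tensor factor other than the $j$-th and $k$-th. Permuting factors by the symmetric group (which fixes every element of $\BB S^{\odot n}$ and only relabels the slots of $A_{jk}$), I may assume $j=1$, $k=2$ and regroup the product as $(\BB S \otimes \BB S) \otimes R$, where $R$ collects the remaining $n-2$ copies of $\BB S$. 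Then $A_{12} = A \otimes \operatorname{id}_R$ with $A = \sum_{i=0}^3 e_i \otimes e_i \in \End(\BB S \otimes \BB S)$.

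Next I would use the symmetry of $t$ to land inside the symmetric part of the first two factors. Since $\BB S \otimes \BB S = (\BB S \odot \BB S) \oplus (\BB S \wedge \BB S)$ and the transposition $\tau_{12}$ acts as $+1$ on the symmetric summand and $-1$ on the antisymmetric one, any $\tau_{12}$-invariant tensor lies in $(\BB S \odot \BB S) \otimes R$. As $t \in \BB S^{\odot n}$ is totally symmetric it is in particular $\tau_{12}$-invariant, hence $t \in (\BB S \odot \BB S) \otimes R$. Thus it suffices to prove that $A$ annihilates $\BB S \odot \BB S$.

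This two-factor vanishing is the heart of the matter, and I would settle it either by a brute-force check in the $2 \times 2$ matrix realization of $\HC \simeq \End(\BB S)$, or more conceptually via the $\mathfrak{sl}_2$-Casimir. In the standard realization $e_0 = \operatorname{id}$ while $e_1, e_2, e_3$ are, up to the scalar $i$, the Pauli matrices, so $A = e_0 \otimes e_0 + \sum_{l=1}^3 e_l \otimes e_l = \operatorname{id} \otimes \operatorname{id} - \sum_{l} \sigma_l \otimes \sigma_l$. Expanding the square of the total-spin operator $\vec{\sigma} \otimes 1 + 1 \otimes \vec{\sigma}$ (whose square equals $4\,S(S+1)$ on the spin-$S$ component) shows that $\sum_l \sigma_l \otimes \sigma_l$ has eigenvalue $+1$ on the symmetric (spin-$1$) part and $-3$ on the antisymmetric (spin-$0$) part. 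Hence $A = 1 - 1 = 0$ on $\BB S \odot \BB S$ (and $A = 1 + 3 = 4$ on $\BB S \wedge \BB S$), which yields $A_{12}t = 0$ and the first assertion.

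For the second assertion, involving $t' \in (\BB S')^{\odot n}$ and the right action, I would run the identical argument with rows in place of columns: the relevant two-factor operator is right multiplication by $\sum_i e_i \otimes e_i$ on $\BB S' \otimes \BB S'$, and the same Casimir computation (equivalently, the transpose of the first one under the pairing (\ref{S-pairing})) shows it kills the symmetric tensors $\BB S' \odot \BB S'$. The only delicate point is the bookkeeping of the reduction to two factors; once that is arranged, the vanishing is the short linear-algebra fact above, so I do not expect a genuine obstacle.
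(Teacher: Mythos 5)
Your proof is correct. Note that this paper does not actually prove the lemma: it is restated verbatim from Lemma 5 of [FL4], so there is no in-paper argument to compare against; your write-up supplies a complete self-contained proof. The reduction is sound: since the operator acts as the identity outside slots $j,k$ and a totally symmetric tensor is in particular invariant under the transposition $\tau_{jk}$, everything comes down to showing that $A=\sum_{i=0}^3 e_i\otimes e_i$ annihilates $\BB S\odot\BB S$, and your Casimir computation does exactly that. In fact what you have shown is the cleaner statement $\sum_{i=0}^3 e_i\otimes e_i = 4P_{\wedge}$, where $P_{\wedge}$ is the projector onto the one-dimensional space $\BB S\wedge\BB S$ (your eigenvalues $0$ on the triplet and $4$ on the singlet); this is worth recording, as it makes both assertions of the lemma, for columns and for rows, immediate and is independent of the sign conventions relating the $e_l$ to the Pauli matrices, since $e_l\otimes e_l=(\pm i)^2\,\sigma_l\otimes\sigma_l$ in any such convention. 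The only stylistic caveat is that one could bypass the Casimir entirely by checking $A(u\otimes u)=A(v\otimes v)=A(u\otimes v+v\otimes u)=0$ on the standard basis of $\BB C^2$, which is a three-line verification; but your argument is complete as it stands.
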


We consider spaces of functions
$$
\hat {\cal F}_n = \bigl\{ f: \HC \to
\underbrace{\BB S \otimes \cdots \otimes \BB S}_{\text{$n$ times}} \bigr\}
\qquad \text{and} \qquad
\hat {\cal G}_n = \bigl\{ g: \HC \to
\underbrace{\BB S' \otimes \cdots \otimes \BB S'}_{\text{$n$ times}} \bigr\}
$$
(possibly with singularities), and let the group $GL(2,\HC)$ act on these
spaces as follows:
\begin{equation}  \label{pi_nl}
\pi_{nl}(h): \: f(Z) \: \mapsto \: \bigl( \pi_{nl}(h)f \bigr)(Z)
= \frac{(cZ+d)^{-1} \otimes \cdots \otimes (cZ+d)^{-1}}{N(cZ+d)}
\cdot f\bigl( (aZ+b)(cZ+d)^{-1} \bigr),  \\
\end{equation}
\begin{multline}  \label{pi_nr}
\pi_{nr}(h): \: g(Z) \: \mapsto \: \bigl( \pi_{nr}(h)g \bigr)(Z) \\
= g \bigl( (a'-Zc')^{-1}(-b'+Zd') \bigr)
\cdot \frac{(a'-Zc')^{-1} \otimes \cdots \otimes (a'-Zc')^{-1}}{N(a'-Zc')},
\end{multline}
where
$f \in \hat {\cal F}_n$, $g \in \hat {\cal G}_n$,
$h = \bigl( \begin{smallmatrix} a' & b' \\ c' & d' \end{smallmatrix} \bigr)
\in GL(2,\HC)$ with
$h^{-1} = \bigl( \begin{smallmatrix} a & b \\ c & d \end{smallmatrix} \bigr)$.
Clearly, these two actions preserve the subspaces of functions with values
in $n$-fold symmetric products $\BB S \odot \cdots \odot \BB S$ and
$\BB S' \odot \cdots \odot \BB S'$ respectively.

Differentiating $\pi_{nl}$ and $\pi_{nr}$, we obtain actions of the Lie algebra
$\mathfrak{gl}(2,\HC)$, which we still denote by $\pi_{nl}$ and $\pi_{nr}$
respectively. Using notations
$$
\partial = \begin{pmatrix} \partial_{11} & \partial_{21} \\
\partial_{12} & \partial_{22} \end{pmatrix} = \frac 12 \nabla, \qquad
\partial^+ = \begin{pmatrix} \partial_{22} & -\partial_{21} \\
-\partial_{12} & \partial_{11} \end{pmatrix} = \frac 12 \nabla^+,
\qquad \partial_{ij} = \frac{\partial}{\partial z_{ij}},
$$
we can describe these actions of the Lie algebra (cf. Lemma 4 in \cite{FL4}).

\begin{lem}  \label{Lie-alg-action}
The Lie algebra action $\pi_{nl}$ of $\mathfrak{gl}(2,\HC)$ on
$\hat {\cal F}_n$ is given by
\begin{align*}
\pi_{nl} \bigl( \begin{smallmatrix} A & 0 \\ 0 & 0 \end{smallmatrix} \bigr) &:
f(Z) \mapsto - \tr (AZ \partial) f,  \\
\pi_{nl} \bigl( \begin{smallmatrix} 0 & B \\ 0 & 0 \end{smallmatrix} \bigr) &:
f(Z) \mapsto - \tr (B \partial) f,  \\
\pi_{nl} \bigl( \begin{smallmatrix} 0 & 0 \\ C & 0 \end{smallmatrix} \bigr) &:
f(Z) \mapsto \tr (ZCZ \partial +CZ) f \\
& \phantom{: f(Z) \mapsto \tr (}
+ (CZ \otimes 1 \otimes \cdots \otimes 1 + 1 \otimes CZ \otimes \cdots \otimes 1
+ \cdots + 1 \otimes \cdots \otimes 1 \otimes CZ) f,  \\
\pi_{nl} \bigl( \begin{smallmatrix} 0 & 0 \\ 0 & D \end{smallmatrix} \bigr) &:
f(Z) \mapsto \tr (ZD \partial +D) f \\
& \phantom{: f(Z) \mapsto \tr (}
+ (D \otimes 1 \otimes \cdots \otimes 1 + 1 \otimes D \otimes \cdots \otimes 1
+ \cdots + 1 \otimes \cdots \otimes 1 \otimes D) f.
\end{align*}

Similarly, the Lie algebra action $\pi_{nr}$ of $\mathfrak{gl}(2,\HC)$ on
$\hat {\cal G}_n$ is given by
\begin{align*}
\pi_{nr} \bigl( \begin{smallmatrix} A & 0 \\ 0 & 0 \end{smallmatrix} \bigr) &:
g(Z) \mapsto - \tr (AZ \partial +A) g \\
& \phantom{: g(Z) \mapsto - \tr (}
-g (A \otimes 1 \otimes \cdots \otimes 1 + 1 \otimes A \otimes \cdots \otimes 1
+ \cdots + 1 \otimes \cdots \otimes 1 \otimes A),  \\
\pi_{nr} \bigl( \begin{smallmatrix} 0 & B \\ 0 & 0 \end{smallmatrix} \bigr) &:
g(Z) \mapsto - \tr (B \partial) g,  \\
\pi_{nr} \bigl( \begin{smallmatrix} 0 & 0 \\ C & 0 \end{smallmatrix} \bigr) &:
g(Z) \mapsto \tr (ZCZ \partial +ZC) g \\
& \phantom{: g(Z) \mapsto \tr (}
+g (ZC \otimes 1 \otimes \cdots \otimes 1 + 1 \otimes ZC \otimes \cdots \otimes 1
+ \cdots + 1 \otimes \cdots \otimes 1 \otimes ZC),  \\
\pi_{nr} \bigl( \begin{smallmatrix} 0 & 0 \\ 0 & D \end{smallmatrix} \bigr) &:
g(Z) \mapsto \tr (ZD \partial) g.
\end{align*}
\end{lem}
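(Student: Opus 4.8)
The plan is to differentiate the group actions (\ref{pi_nl}) and (\ref{pi_nr}) along one-parameter subgroups. For $X \in \mathfrak{gl}(2,\HC)$ I would set $h(t) = \exp(tX)$ and compute $\frac{d}{dt}\big|_{t=0}\pi_{nl}(h(t))f$; since (\ref{pi_nl}) is written in terms of the entries of $h^{-1}$, this means using $h(t)^{-1} = \exp(-tX)$, whereas for (\ref{pi_nr}) I use the entries of $h(t) = \exp(tX)$ directly. By linearity of the Lie algebra action it suffices to treat the four generator blocks $\bigl(\begin{smallmatrix} A & 0 \\ 0 & 0\end{smallmatrix}\bigr)$, $\bigl(\begin{smallmatrix} 0 & B \\ 0 & 0\end{smallmatrix}\bigr)$, $\bigl(\begin{smallmatrix} 0 & 0 \\ C & 0\end{smallmatrix}\bigr)$, $\bigl(\begin{smallmatrix} 0 & 0 \\ 0 & D\end{smallmatrix}\bigr)$ separately.

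For the upper-triangular blocks the computation is short. With $X = \bigl(\begin{smallmatrix} A & 0 \\ 0 & 0\end{smallmatrix}\bigr)$ one has $h(t)^{-1} = \bigl(\begin{smallmatrix} e^{-tA} & 0 \\ 0 & 1\end{smallmatrix}\bigr)$, so $cZ+d = 1$, the prefactor in (\ref{pi_nl}) is trivial, and one is left with $f(e^{-tA}Z)$; the chain rule together with the index bookkeeping $\tr(MZ\partial)f = \sum_{i,j}(MZ)_{ij}\,\partial_{ij}f$, which follows from the transpose arrangement of the entries of $\partial$, yields $-\tr(AZ\partial)f$. Likewise $X = \bigl(\begin{smallmatrix} 0 & B \\ 0 & 0\end{smallmatrix}\bigr)$ gives $f(Z-tB)$ and hence $-\tr(B\partial)f$.

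The lower-triangular blocks are where the prefactors become active, and this is the main work. For $X = \bigl(\begin{smallmatrix} 0 & 0 \\ C & 0\end{smallmatrix}\bigr)$ one finds $h(t)^{-1} = \bigl(\begin{smallmatrix} 1 & 0 \\ -tC & 1\end{smallmatrix}\bigr)$, so $cZ+d = 1 - tCZ$ and the argument becomes $Z(1-tCZ)^{-1}$. I would use two elementary derivative identities: $\frac{d}{dt}\big|_{0}(1-tM)^{-1} = M$, and, under the identification of $\HC$ with the algebra of $2\times 2$ complex matrices where $N(Z) = \det Z$ so that $\det(1-tM) = 1 - t\,\tr M + O(t^2)$, the identity $\frac{d}{dt}\big|_{0}N(1-tM)^{-1} = \tr M$. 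The product rule then splits into three contributions: differentiating the tensor prefactor produces $CZ\otimes 1\otimes\cdots\otimes 1 + \cdots + 1\otimes\cdots\otimes 1\otimes CZ$, differentiating $1/N(1-tCZ)$ produces $\tr(CZ)f$, and differentiating $f(Z(1-tCZ)^{-1})$ produces $\tr(ZCZ\partial)f$; collecting the two scalar traces as $\tr(ZCZ\partial + CZ)f$ gives exactly the stated formula. The block $X = \bigl(\begin{smallmatrix} 0 & 0 \\ 0 & D\end{smallmatrix}\bigr)$ is handled identically with $cZ+d = e^{-tD}$, argument $Ze^{tD}$, tensor contribution $D\otimes 1\otimes\cdots\otimes 1 + \cdots + 1\otimes\cdots\otimes 1\otimes D$, norm contribution $\tr(D)f$, and chain-rule contribution $\tr(ZD\partial)f$.

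The right action $\pi_{nr}$ is established by the same differentiation, now using the entries of $h(t) = \exp(tX)$, with the transformation $(a'-Zc')^{-1}(-b'+Zd')$ and with the tensor and norm prefactors acting by right multiplication on $g$. The ordering and sign differences between (\ref{pi_nl}) and (\ref{pi_nr}) are precisely what move the $\tr A$ term into the $A$-block and remove the $\tr D$ term from the $D$-block: for $\pi_{nr}$ the prefactors depend on $(a'-Zc')$, which is nontrivial for the $A$-block but trivial for the $D$-block, opposite to the $(cZ+d)$-dependence in $\pi_{nl}$. This reproduces the asymmetry in the statement. The only real subtlety to watch is consistency of the matrix-index conventions throughout — the transpose placement of the entries of $\partial$ and the order of factors in the non-commuting products $ZCZ$ and $CZ$ — so that the scalar traces assemble correctly; once these are fixed, the eight computations are routine.
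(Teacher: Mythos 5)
Your proposal is correct and is exactly the paper's approach: the paper's proof consists of the single sentence that the formulas are obtained by differentiating (\ref{pi_nl}) and (\ref{pi_nr}), and your write-up supplies precisely those differentiations, with the block-by-block bookkeeping (including the use of $h(t)^{-1}=\exp(-tX)$ for $\pi_{nl}$, the identity $N(1-tM)^{-1}=1+t\tr M+O(t^2)$, and the transpose convention for $\partial$) all handled correctly.
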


\begin{proof}
These formulas are obtained by differentiating (\ref{pi_nl}) and (\ref{pi_nr}).
\end{proof}

We introduce $2n$ first order differential operators
\begin{multline*}
\underset{k}{\nabla^+} = (1 \otimes \cdots
\underset{\text{$k$-th place}}{\otimes e_0 \otimes} \cdots \otimes 1)
\frac{\partial}{\partial x^0}
+ (1 \otimes \cdots \underset{\text{$k$-th place}}{\otimes e_1 \otimes}
\cdots \otimes 1) \frac{\partial}{\partial x^1}  \\
+ (1 \otimes \cdots \underset{\text{$k$-th place}}{\otimes e_2 \otimes}
\cdots \otimes 1) \frac{\partial}{\partial x^2}
+ (1 \otimes \cdots \underset{\text{$k$-th place}}{\otimes e_3 \otimes}
\cdots \otimes 1) \frac{\partial}{\partial x^3},
\end{multline*}
\begin{multline*}
\underset{k}{\nabla} = \frac{\partial}{\partial x^0} (1 \otimes \cdots
\underset{\text{$k$-th place}}{\otimes e_0 \otimes} \cdots \otimes 1)
- \frac{\partial}{\partial x^1} (1 \otimes \cdots
\underset{\text{$k$-th place}}{\otimes e_1 \otimes} \cdots \otimes 1)  \\
- \frac{\partial}{\partial x^2}(1 \otimes \cdots
\underset{\text{$k$-th place}}{\otimes e_2 \otimes} \cdots \otimes 1)
- \frac{\partial}{\partial x^3}(1 \otimes \cdots
\underset{\text{$k$-th place}}{\otimes e_3 \otimes} \cdots \otimes 1),
\end{multline*}
$k=1,\dots,n$, which can be applied to functions with values in $n$-fold
tensor products $\BB S \otimes \cdots \otimes \BB S$ or
$\BB S' \otimes \cdots \otimes \BB S'$ as follows.
If $U$ is an open subset of $\BB H$ or $\HC$ and
$f: U \to \BB S \otimes \cdots \otimes \BB S$ is a differentiable function,
then these operators can be applied to $f$ on the left. For example,
\begin{multline*}
\underset{1}{\nabla^+} f = (e_0 \otimes 1 \otimes \cdots \otimes 1)
\frac{\partial f}{\partial x^0}
+ (e_1 \otimes 1 \otimes \cdots \otimes 1) \frac{\partial f}{\partial x^1}  \\
+ (e_2 \otimes 1 \otimes \cdots \otimes 1) \frac{\partial f}{\partial x^2}
+ (e_3 \otimes 1 \otimes \cdots \otimes 1) \frac{\partial f}{\partial x^3}.
\end{multline*}
Similarly, these operators can be applied on the right to
differentiable functions $g: U \to \BB S' \otimes \cdots \otimes \BB S'$;
we often indicate this with an arrow above the operator. For example,
\begin{multline*}
g \overleftarrow{\underset{n}{\nabla^+}}
= \frac{\partial g}{\partial x^0} (1 \otimes \cdots \otimes 1 \otimes e_0)
+ \frac{\partial g}{\partial x^1} (1 \otimes \cdots \otimes 1 \otimes e_1)  \\
+ \frac{\partial g}{\partial x^2} (1 \otimes \cdots \otimes 1 \otimes e_2)
+ \frac{\partial g}{\partial x^3} (1 \otimes \cdots \otimes 1 \otimes e_3).
\end{multline*}

\begin{df}  \label{nr-definition}
Let $U$ be an open subset of $\BB H$.
A ${\cal C}^1$-function $f: U \to \BB S \odot \cdots \odot \BB S$ is
{\em left $n$-regular} if it satisfies $n$ differential equations
$$
\underset{k}{\nabla^+} f =0, \qquad k=1,\dots,n,
$$
for all points in $U$.
Similarly, a ${\cal C}^1$-function $g: U \to \BB S' \odot \cdots \odot \BB S'$
is {\em right $n$-regular} if
$$
g \overleftarrow{\underset{k}{\nabla^+}} =0, \qquad k=1,\dots,n,
$$
for all points in $U$.
\end{df}

Since
$$
\underset{k}{\nabla^+} \underset{k}{\nabla}
= \underset{k}{\nabla} \underset{k}{\nabla^+}
=\square, \qquad k=1,\dots,n,
$$
where
$$
\square = \frac{\partial^2}{(\partial x^0)^2}+
\frac{\partial^2}{(\partial x^1)^2} + \frac{\partial^2}{(\partial x^2)^2}+
\frac{\partial^2}{(\partial x^3)^2},
$$
left and right $n$-regular functions are harmonic.

One way to construct $n$-regular functions is to start with a harmonic
function $\phi: \BB H \to \BB S \odot \cdots \odot \BB S$, then
$(\nabla \otimes \cdots \otimes \nabla) \phi$ is left $n$-regular.
Similarly, if $\phi: \BB H \to \BB S' \odot \cdots \odot \BB S'$ is harmonic,
then $\phi (\overleftarrow{\nabla \otimes \cdots \otimes \nabla})$
is right $n$-regular.

We also can talk about $n$-regular functions defined on open subsets of
$\HC$. In this case we require such functions to be holomorphic.

\begin{df}  \label{nr-definition-C}
Let $U$ be an open subset of $\HC$.
A holomorphic function $f: U \to \BB S \odot \cdots \odot \BB S$ is
{\em left $n$-regular} if it satisfies $n$ differential equations
$$
\underset{k}{\nabla^+} f =0, \qquad k=1,\dots,n,
$$
for all points in $U$.

Similarly, a holomorphic function $g: U \to \BB S' \odot \cdots \odot \BB S'$
is {\em right $n$-regular} if
$$
g \overleftarrow{\underset{k}{\nabla^+}} =0, \qquad k=1,\dots,n,
$$
for all points in $U$.
\end{df}

Let ${\cal R}_n$ and ${\cal R}'_n$ denote respectively the spaces of
(holomorphic) left and right $n$-regular functions on $\HC$,
possibly with singularities.

\begin{thm}  \label{dr-action-thm}
\begin{enumerate}
\item
The space ${\cal R}_n$ of left $n$-regular functions
$\HC \to \BB S \odot \cdots \odot \BB S$ (possibly with singularities)
is invariant under the $\pi_{nl}$ action (\ref{pi_nl}) of $GL(2,\HC)$.
\item
The space ${\cal R}'_n$ of right $n$-regular functions
$\HC \to \BB S' \odot \cdots \odot \BB S'$ (possibly with singularities)
is invariant under the $\pi_{nr}$ action (\ref{pi_nr}) of $GL(2,\HC)$.
\end{enumerate}
\end{thm}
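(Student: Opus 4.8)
The plan is to argue infinitesimally. Since $GL(2,\HC)\cong GL(4,\BB C)$ is connected and $\pi_{nl}$ is a representation, it suffices to prove that the $\mathfrak{gl}(2,\HC)$-action of Lemma \ref{Lie-alg-action} preserves the solution space of the system $\underset{k}{\nabla^+}f=0$, $k=1,\dots,n$; if one prefers to avoid the passage from the Lie algebra to the group, one can instead check finite covariance directly on the generators (translations, the diagonal torus, and the inversion $Z\mapsto Z^{-1}$). In either formulation, I would show, for each of the four block generators $X$ and each $k$, that $\underset{k}{\nabla^+}\bigl(\pi_{nl}(X)f\bigr)$ lies in the left ideal generated by $\underset{1}{\nabla^+},\dots,\underset{n}{\nabla^+}$, so that it vanishes whenever $f$ is left $n$-regular. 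Holomorphy is preserved automatically because the automorphy factors are holomorphic, so only the regularity equations need attention. Concretely this reduces to computing the commutators $[\underset{k}{\nabla^+},\pi_{nl}(X)]$ and checking that, modulo that ideal, $\underset{k}{\nabla^+}$ commutes with the action.

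I would dispose of the two generators without slot terms first. The translation generator $\bigl(\begin{smallmatrix}0&B\\0&0\end{smallmatrix}\bigr)$ acts by the constant-coefficient operator $-\tr(B\partial)$, which commutes with every $\underset{k}{\nabla^+}$. For $\bigl(\begin{smallmatrix}A&0\\0&0\end{smallmatrix}\bigr)$, acting by $-\tr(AZ\partial)$, the commutator $[\underset{k}{\nabla^+},\tr(AZ\partial)]$—evaluated using $\partial_{x^i}Z=e_i$—lands in the left ideal, so $\underset{k}{\nabla^+}(\pi_{nl}(X)f)$ vanishes on $n$-regular $f$; this is the infinitesimal form of the weight covariance already present for $n=1$ in \cite{FL1}. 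For the diagonal generator $\bigl(\begin{smallmatrix}0&0\\0&D\end{smallmatrix}\bigr)$, acting by $\tr(ZD\partial+D)f+\sum_m(1\otimes\cdots\otimes D\otimes\cdots\otimes1)f$, a spectator slot term ($m\ne k$) merely carries the constant factor $(D)_m$ past $\underset{k}{\nabla^+}$, giving $(D)_m\,\underset{k}{\nabla^+}f=0$, while the active term ($m=k$) combines with $\tr(ZD\partial)$ and $\tr(D)$ exactly as in the $n=1$ computation.

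The heart of the proof will be the special conformal generator $\bigl(\begin{smallmatrix}0&0\\C&0\end{smallmatrix}\bigr)$, acting by $\tr(ZCZ\partial+CZ)f+\sum_m(1\otimes\cdots\otimes CZ\otimes\cdots\otimes1)f$, whose leading coefficient is quadratic in $Z$ and whose slot terms now depend on $Z$. Here I would compute $[\underset{k}{\nabla^+},\tr(ZCZ\partial)]$ using $\partial_{x^i}(ZCZ)=e_iCZ+ZCe_i$ and match it against the contributions of $\tr(CZ)$ and of the active slot term $m=k$; this bookkeeping is the quaternionic counterpart of the classical covariance of the Cauchy–Fueter operator under $Z\mapsto Z^{-1}$, and it should reduce slot by slot to the $n=1$ case. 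The genuinely new feature for $n>1$ is the spectator slot terms: for $m\ne k$, differentiating $(CZ)_m f$ produces the cross term $\sum_i(e_i)_k(Ce_i)_m f=C_m\bigl(\sum_i(e_i)_k(e_i)_m f\bigr)$, which vanishes identically by Lemma \ref{antisymmetric} since $f$ takes values in $\BB S\odot\cdots\odot\BB S$. Thus every cross-slot term drops out and the slots decouple.

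Finally, the right-regular statement for $\pi_{nr}$ would follow by the mirror computation, using the second set of formulas in Lemma \ref{Lie-alg-action}, the right operators $g\overleftarrow{\underset{k}{\nabla^+}}$, and the second identity of Lemma \ref{antisymmetric}; alternatively it transfers from the left case through the pairing (\ref{S-pairing}) between $\BB S'$ and $\BB S$. I expect the one real obstacle to be the commutator bookkeeping for the special conformal generator—arranging the quadratic term $\tr(ZCZ\partial)$ together with the $m=k$ slot term so that the total collapses onto the known $n=1$ covariance—whereas Lemma \ref{antisymmetric} handles all the $m\ne k$ cross terms cleanly.
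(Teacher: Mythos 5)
Your proposal is correct and follows essentially the same route as the paper: reduce to the Lie algebra by connectedness, group the quadratic/active-slot terms so they collapse to the known $n=1$ covariance, absorb the spectator-slot terms carrying $\underset{k}{\nabla^+}f$, and kill the cross-slot terms $\sum_i(e_i)_j(e_i)_k$ via Lemma \ref{antisymmetric}. The paper writes out only the $\bigl(\begin{smallmatrix}0&0\\C&0\end{smallmatrix}\bigr)$ case, which you correctly identify as the only nontrivial one.
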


\begin{proof}
Since the Lie group $GL(2,\HC) \simeq GL(4,\BB C)$ is connected, it is
sufficient to show that, if $f \in {\cal R}_n$, $g \in {\cal R}'_n$ and
$\bigl( \begin{smallmatrix} A & B \\ C & D \end{smallmatrix} \bigr) \in
\mathfrak{gl}(2,\HC)$, then
$\pi_{nl} \bigl( \begin{smallmatrix} A & B \\ C & D \end{smallmatrix} \bigr)f
\in {\cal R}_n$ and
$\pi_{nr} \bigl( \begin{smallmatrix} A & B \\ C & D \end{smallmatrix} \bigr)g
\in {\cal R}'_n$.
Consider, for example, the case of 
$\pi_{nl} \bigl( \begin{smallmatrix} 0 & 0 \\ C & 0 \end{smallmatrix} \bigr)f$,
the other cases are similar. For $k=1,\dots,n$, we have:
\begin{multline*}
\underset{k}{\nabla^+} \pi_{nl}
\bigl( \begin{smallmatrix} 0 & 0 \\ C & 0 \end{smallmatrix} \bigr)f
= \underset{k}{\nabla^+} \Bigl( \tr(ZCZ \partial +CZ)f
+ \sum_{j=1}^n (1 \otimes \cdots
\underset{\text{$j$-th place}}{\otimes CZ \otimes} \cdots \otimes 1)f \Bigr)  \\
= \underset{k}{\nabla^+} \Bigl( \tr(ZCZ \partial +CZ)f +
(1 \otimes \cdots
\underset{\text{$k$-th place}}{\otimes CZ \otimes} \cdots \otimes 1)f \Bigr)  \\
+ \sum_{j \ne k} (1 \otimes \cdots
\underset{\text{$j$-th place}}{\otimes CZ \otimes} \cdots \otimes 1)
\underset{k}{\nabla^+}f \\
+ \sum_{j \ne k} (1 \otimes \cdots
\underset{\text{$j$-th place}}{\otimes C \otimes} \cdots \otimes 1)
\sum_{i=0}^3 (1 \otimes \cdots
\underset{\text{$j$-th place}}{\otimes e_i \otimes} \cdots 
\underset{\text{$k$-th place}}{\otimes e_i \otimes} \cdots \otimes 1)f,
\end{multline*}
the first summand is zero essentially because the space of left regular
functions is invariant under the action $\pi_l$ (equation (22) in \cite{FL1}),
the second summand is zero because $f$ satisfies $\underset{k}{\nabla^+} f=0$,
and the third summand is zero by Lemma \ref{antisymmetric}.
\end{proof}

\section{Cauchy-Fueter Formulas for $n$-Regular Functions}  \label{Cauchy-Fueter-section}

In this section we derive the Cauchy-Fueter type formulas for $n$-regular
functions from the classical Cauchy-Fueter formulas for left and right
regular functions.

\begin{lem}  \label{zf-regular}
Let $f(Z)$ be a left $n$-regular function, then the
$\BB S \otimes \cdots \otimes \BB S$-valued functions
$$
(Z \otimes \cdots \underset{\text{$k$-th place}}{\otimes 1 \otimes}
\cdots \otimes Z) f(Z), \qquad k=1,\dots,n,
$$
are ``left regular in $k$-th place'' in the sense that they satisfy
$$
\underset{k}{\nabla^+} \Bigl[
(Z \otimes \cdots \underset{\text{$k$-th place}}{\otimes 1 \otimes}
\cdots \otimes Z) f(Z) \Bigr] = 0.
$$

Similarly, if $g(Z)$ is a right $n$-regular function, then the
$\BB S' \otimes \cdots \otimes \BB S'$-valued functions
$$
g(Z) (Z \otimes \cdots \underset{\text{$k$-th place}}{\otimes 1 \otimes}
\cdots \otimes Z), \qquad k=1,\dots,n,
$$
are ``right regular in $k$-th place'' in the sense that they satisfy
$$
\Bigl[ g(Z) (Z \otimes \cdots \underset{\text{$k$-th place}}{\otimes 1 \otimes}
\cdots \otimes Z) \Bigr] \overleftarrow{\underset{k}{\nabla^+}} =0.
$$
\end{lem}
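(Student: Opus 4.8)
The plan is to expand $\underset{k}{\nabla^+}$ acting on the product by the Leibniz rule, exploiting that this operator acts only in the $k$-th tensor slot, which is precisely where the product carries the constant factor $1$. Writing $P = Z \otimes \cdots \underset{\text{$k$-th place}}{\otimes 1 \otimes} \cdots \otimes Z$ and using $\partial Z/\partial x^i = e_i$, I would write
$$
\underset{k}{\nabla^+}(Pf) = \sum_{i=0}^3 (1 \otimes \cdots \underset{\text{$k$-th place}}{\otimes e_i \otimes} \cdots \otimes 1) \Bigl( \frac{\partial P}{\partial x^i} f + P \frac{\partial f}{\partial x^i} \Bigr),
$$
splitting into a piece where the derivative lands on $f$ and a piece where it lands on $P$, and show that each piece vanishes on its own.

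First I would dispose of the piece where the derivative hits $f$. Left-multiplying $P$ by the factor carrying $e_i$ in slot $k$ simply replaces the $1$ in slot $k$ by $e_i$ and leaves the $Z$'s in the remaining slots unchanged, so after factoring those $Z$'s back out this piece equals
$$
(Z \otimes \cdots \underset{\text{$k$-th place}}{\otimes 1 \otimes} \cdots \otimes Z) \Bigl( \sum_{i=0}^3 (1 \otimes \cdots \underset{\text{$k$-th place}}{\otimes e_i \otimes} \cdots \otimes 1) \frac{\partial f}{\partial x^i} \Bigr) = P \cdot \underset{k}{\nabla^+} f,
$$
which is zero because $f$ is left $n$-regular.

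Next I would treat the piece where the derivative hits $P$. Since slot $k$ of $P$ is the constant $1$, differentiation gives $\partial P/\partial x^i = \sum_{j \ne k}(Z \otimes \cdots \underset{\text{$j$-th}}{\otimes e_i \otimes} \cdots \underset{\text{$k$-th}}{\otimes 1 \otimes} \cdots \otimes Z)$; premultiplying by the $e_i$-in-slot-$k$ factor inserts an $e_i$ into slot $k$ as well, and after factoring out the $Z$'s from all slots other than $j$ and $k$ this piece becomes
$$
\sum_{j \ne k} (Z \otimes \cdots \underset{\text{$j$-th}}{\otimes 1} \cdots \underset{\text{$k$-th}}{\otimes 1} \cdots \otimes Z) \Bigl[ \sum_{i=0}^3 (1 \otimes \cdots \underset{\text{$j$-th}}{\otimes e_i \otimes} \cdots \underset{\text{$k$-th}}{\otimes e_i \otimes} \cdots \otimes 1) f \Bigr].
$$
Because $f$ takes values in the symmetric product $\BB S \odot \cdots \odot \BB S$, each inner bracket vanishes by Lemma \ref{antisymmetric}, so the whole piece is zero. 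Combining the two vanishing pieces gives $\underset{k}{\nabla^+}(Pf) = 0$.

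The right-regular assertion follows from the mirror-image computation, with $P$ now multiplying $g$ on the right and $\underset{k}{\nabla^+}$ acting from the right; the required vanishing is then supplied by the second identity in Lemma \ref{antisymmetric} for $\BB S' \odot \cdots \odot \BB S'$-valued functions. The only genuine bookkeeping hazard is the Leibniz expansion of $\partial P/\partial x^i$: one must recognize that the derivative-on-$P$ contribution reproduces exactly the diagonal insertion $\sum_i e_i \otimes e_i$ in the pair of slots $(j,k)$ that Lemma \ref{antisymmetric} annihilates. Once the slots are tracked correctly, no further difficulty remains.
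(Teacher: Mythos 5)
Your proof is correct and follows essentially the same route as the paper: expand $\underset{k}{\nabla^+}(Pf)$ by the Leibniz rule, kill the derivative-on-$f$ piece using $\underset{k}{\nabla^+}f=0$, and kill the derivative-on-$P$ piece by recognizing the diagonal insertion $\sum_i e_i \otimes e_i$ in slots $(j,k)$ and invoking Lemma \ref{antisymmetric}. Your version merely makes explicit the factoring of the spectator $Z$'s via $(A\otimes B)(C\otimes D)=(AC)\otimes(BD)$, which the paper leaves implicit.
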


\begin{proof}
We have:
\begin{multline*}
\underset{k}{\nabla^+} \Bigl[
(Z \otimes \cdots \underset{\text{$k$-th place}}{\otimes 1 \otimes}
\cdots \otimes Z) f(Z) \Bigr]
= (Z \otimes \cdots \underset{\text{$k$-th place}}{\otimes 1 \otimes}
\cdots \otimes Z) \Bigl[ \underset{k}{\nabla^+} f(Z) \Bigr]  \\
+ \sum_{j \ne k} \sum_{i=0}^3 (Z \otimes \cdots
\underset{\text{$j$-th place}}{\otimes e_i \otimes} \cdots 
\underset{\text{$k$-th place}}{\otimes e_i \otimes} \cdots \otimes Z) f(Z),
\end{multline*}
the first term is zero because $f$ satisfies $\underset{k}{\nabla^+} f=0$
and the second term is zero by Lemma \ref{antisymmetric}.
Proof of the second part of the lemma is similar.
\end{proof}

Recall the quaternionic valued holomorphic $3$-form $Dz$ on $\HC$:
$$
Dz = e_0 dz^1 \wedge dz^2 \wedge dz^3 - e_1 dz^0 \wedge dz^2 \wedge dz^3
+ e_2 dz^0 \wedge dz^1 \wedge dz^3 - e_3 dz^0 \wedge dz^1 \wedge dz^2.
$$
We also consider holomorphic $3$-forms
$$
Z \otimes \cdots \underset{\text{$k$-th place}}{\otimes Dz \otimes}
\cdots \otimes Z, \qquad k=1,\dots,n,
$$
on $\HC$ with values in $\HC \otimes \cdots \otimes \HC$.
We have an analogue of Cauchy's integral theorem for $n$-regular functions.

\begin{lem}  \label{Cauchy-thm}
Let $U \subset \BB H$ be an open bounded subset with piecewise ${\cal C}^1$
boundary $\partial U$. Suppose that $f(Z)$ is left $n$-regular and
$g(Z)$ is right $n$-regular on a neighborhood of the closure $\overline{U}$.
Then, for each $k=1,\dots,n$,
$$
\int_{\partial U} g(Z) \cdot (Z \otimes \cdots
\underset{\text{$k$-th place}}{\otimes Dz \otimes} \cdots \otimes Z)
\cdot f(Z) =0.
$$
\end{lem}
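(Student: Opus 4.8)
The plan is to reduce the statement to the classical Cauchy-Fueter integral theorem for single-variable left and right regular functions (proved in \cite{FL1} via Stokes' theorem), using Lemma \ref{zf-regular} to absorb the spectator factors $Z$ in the slots $j \ne k$.

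First I would factor the middle $3$-form. Because $\HC \otimes \cdots \otimes \HC$ is a tensor product of algebras and the two factors below are supported on complementary slots (hence commute), I can write
$$
Z \otimes \cdots \otimes Dz \otimes \cdots \otimes Z
= (1 \otimes \cdots \otimes Dz \otimes \cdots \otimes 1)\cdot(Z \otimes \cdots \otimes 1 \otimes \cdots \otimes Z),
$$
where in each factor the distinguished entry sits in the $k$-th place. Setting $\hat f(Z) = (Z \otimes \cdots \otimes 1 \otimes \cdots \otimes Z) f(Z)$ (with $1$ in the $k$-th slot), the integrand becomes $g(Z)\cdot(1 \otimes \cdots \otimes Dz \otimes \cdots \otimes 1)\cdot \hat f(Z)$, with $Dz$ in slot $k$ and the identity in every other slot. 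By Lemma \ref{zf-regular}, $\hat f$ is regular in the $k$-th place, i.e. $\underset{k}{\nabla^+}\hat f = 0$, while $g$ is right $n$-regular, so $g\,\overleftarrow{\underset{k}{\nabla^+}} = 0$.

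Next I would apply Stokes' theorem, $\int_{\partial U}\Phi = \int_U d\Phi$, to the $\BB C$-valued $3$-form $\Phi = g\cdot(1 \otimes \cdots \otimes Dz \otimes \cdots \otimes 1)\cdot \hat f$. Since $Dz$ has constant coefficients, only $g$ and $\hat f$ are differentiated, and the computation rests on the standard identity $dx^\mu \wedge Dz = e_\mu\,dx^0 \wedge dx^1 \wedge dx^2 \wedge dx^3$, now read off in the $k$-th slot. This reproduces exactly the two-term expression of the classical case, the remaining slots being contracted along as spectators:
$$
d\Phi = \Bigl[\bigl(g\,\overleftarrow{\underset{k}{\nabla^+}}\bigr)\cdot \hat f + g\cdot\bigl(\underset{k}{\nabla^+}\hat f\bigr)\Bigr] dx^0 \wedge dx^1 \wedge dx^2 \wedge dx^3.
$$
The first term vanishes by right $n$-regularity of $g$ and the second by Lemma \ref{zf-regular}, so $d\Phi = 0$ and the boundary integral is zero.

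The only genuine subtlety is the tensor-slot bookkeeping: one must check that inserting the extra $Z$'s in the spectator slots neither destroys the slot-$k$ regularity required by the classical argument nor generates additional cross terms in $d\Phi$. This is precisely what Lemma \ref{zf-regular} guarantees; internally, the would-be cross terms (where a derivative lands on a spectator $Z$) reorganize via Lemma \ref{antisymmetric} into the combination $\sum_\mu e_\mu \otimes e_\mu$ acting in two distinct slots on a symmetric tensor, which is zero. Once this reduction is in place, the argument is identical to the $n=1$ Cauchy-Fueter theorem, and the second (right-regular) claim follows symmetrically, absorbing the spectator $Z$'s into $g$ instead of $f$.
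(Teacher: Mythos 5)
Your proposal is correct and follows essentially the same route as the paper: write the integrand as $g\cdot(1\otimes\cdots\otimes Dz\otimes\cdots\otimes 1)\cdot\hat f$ with $\hat f=(Z\otimes\cdots\otimes 1\otimes\cdots\otimes Z)f$, apply Stokes' theorem, and kill the two resulting terms using the right $n$-regularity of $g$ and Lemma \ref{zf-regular} (which in turn rests on Lemma \ref{antisymmetric}). The paper's proof is exactly this computation of $d\bigl[g\cdot(Z\otimes\cdots\otimes Dz\otimes\cdots\otimes Z)\cdot f\bigr]$, so no further comparison is needed.
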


Note that the expression inside the integral is a $\BB C$-valued function
obtained by applying the pairing (\ref{S-pairing}).

\begin{proof}
Essentially by the definition of $Dz$,
\begin{multline*}
d \Bigl[ g(Z) \cdot (Z \otimes \cdots
\underset{\text{$k$-th place}}{\otimes Dz \otimes} \cdots \otimes Z)
\cdot f(Z) \Bigr]
= \Bigl[ g(Z) \overleftarrow{\underset{k}{\nabla^+}} \cdot (Z \otimes \cdots
\underset{\text{$k$-th place}}{\otimes 1 \otimes} \cdots \otimes Z)
\cdot f(Z) \\
+ g(Z) \cdot \underset{k}{\nabla^+} \Bigl( (Z \otimes \cdots
\underset{\text{$k$-th place}}{\otimes 1 \otimes} \cdots \otimes Z) \cdot f(Z)
\Bigr) \Bigr] dz^0 \wedge dz^1 \wedge dz^2 \wedge dz^3.
\end{multline*}
By Lemma \ref{zf-regular}, both summands are zero, and the result follows.
\end{proof}

Recall the degree operator $\deg$ acting on functions on $\BB H$:
$$
\deg f = x^0\frac{\partial f}{\partial x^0} +
x^1\frac{\partial f}{\partial x^1} + x^2\frac{\partial f}{\partial x^2}
+ x^3\frac{\partial f}{\partial x^3},
$$
and let $(\deg+m)$, $m \in \BB Z$,  denote the degree operator plus
$m$ times the identity:
$$
(\deg+m) f = \deg f + mf.
$$
Similarly, we can define operators $\deg$ and $(\deg+m)$ acting on functions
on $\HC$. For convenience we recall Lemma 8 from \cite{FL2}
(it applies to both cases).

\begin{lem}
\begin{equation}  \label{deg-nabla}
2(\deg+2) = Z^+\nabla^+ + \nabla Z = \nabla^+Z^+ + Z\nabla.
\end{equation}
\end{lem}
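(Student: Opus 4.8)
The plan is to prove the operator identity $2(\deg+2) = Z^+\nabla^+ + \nabla Z = \nabla^+Z^+ + Z\nabla$ by direct computation, expanding both sides in terms of the real (or complex) coordinates and the quaternionic units $e_i$. First I would expand the right-hand side operators. Writing $Z = \sum_i x^i e_i$ and recalling $\nabla^+ = \sum_i e_i \partial_{x^i}$ while $\nabla = e_0\partial_{x^0} - e_1\partial_{x^1} - e_2\partial_{x^2} - e_3\partial_{x^3}$ (so that $Z^+ = \sum_i x^i e_i^+$ with $e_0^+ = e_0$, $e_j^+ = -e_j$ for $j=1,2,3$), I would compute $Z^+\nabla^+$ and $\nabla Z$ as operators acting on a function $f$. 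The key observation is that each of these composite operators splits into a \emph{first-order} part, coming from differentiating $f$, plus a part where $\nabla$ or $\nabla^+$ hits the coordinate factors inside $Z$ or $Z^+$ producing multiplication-type terms.

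The main steps are as follows. In $\nabla Z$, applying the product rule gives $\nabla Z\, f = (\nabla Z)f + $ (terms where $\nabla$ differentiates $f$ through $Z$); more precisely one must be careful that $Z$ here is the operator of left multiplication, so $\nabla(Zf) = \sum_{i,j} e_i^{\,-} e_j \,\partial_{x^i}(x^j f)$, using the sign convention appropriate to $\nabla$. The diagonal terms $i=j$ produce, via $\partial_{x^i}(x^i f) = f + x^i\partial_{x^i}f$, exactly the degree operator $\deg f$ together with a multiple-of-identity term counting the four coordinates; the products $e_i e_j$ with the correct signs collapse to scalars $\pm 1$ on the diagonal. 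The off-diagonal terms $i\ne j$ must cancel: this is where the quaternionic relations $e_ie_j = -e_je_i$ enter, pairing up $(i,j)$ and $(j,i)$ contributions with opposite signs. I would organize this cancellation by noting that $Z^+\nabla^+ + \nabla Z$ is manifestly the ``symmetrized'' combination whose off-diagonal pieces annihilate one another.

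After assembling the diagonal contributions I expect to obtain $Z^+\nabla^+ f + \nabla Z f = 2\deg f + 4f = 2(\deg+2)f$, the constant $4$ arising from the four coordinate directions and the factor $2$ from the two summands each contributing a copy of $\deg$. The second equality $\nabla^+Z^+ + Z\nabla = 2(\deg+2)$ follows by the \emph{same} computation with the roles of $\nabla,Z$ and $\nabla^+,Z^+$ interchanged, or more slickly by applying quaternionic conjugation: conjugating the identity $Z^+\nabla^+ + \nabla Z = 2(\deg+2)$ and using that $(\deg+2)$ is conjugation-invariant (it is a scalar differential operator) together with $(Z^+\nabla^+)^+ = \nabla^+ Z^+$-type relations sends the first form to the second. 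I would state that both the real case (functions on $\BB H$) and the holomorphic case (functions on $\HC$) are handled identically, since the computation is purely formal in the coordinates $x^i$ or $z^i$.

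The main obstacle I anticipate is bookkeeping of the signs: the operators $\nabla$ and $\nabla^+$ carry different sign patterns on the imaginary units, and $Z^+$ flips the sign of $e_1,e_2,e_3$ relative to $Z$, so one must track four sign sources simultaneously and verify that the diagonal terms reinforce (giving $2\deg+4$) while the off-diagonal terms cancel. Because this is precisely Lemma 8 of the cited reference \cite{FL2}, I would keep the proof short, remarking that it is a direct coordinate computation and that the identity is in fact well known; the substance is entirely in correctly combining $\partial_{x^i}(x^j\,\cdot\,)$ with the multiplication table of the $e_i$, and no deeper structural input is required.
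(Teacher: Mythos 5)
Your computation is correct, but note that the paper itself gives no proof of this lemma at all --- it is simply quoted as Lemma 8 of \cite{FL2} ``for convenience,'' so there is no internal argument to compare against. Your direct coordinate expansion is a valid, self-contained verification: writing $\nabla = \sum_j e_j^+\partial_{x^j}$ and $\nabla^+=\sum_j e_j\partial_{x^j}$, the product rule in $\nabla(Zf)$ produces $\sum_j e_j^+e_jf = 4f$, and the remaining first-order terms of $Z^+\nabla^+f+\nabla(Zf)$ assemble into $\sum_{i,j}x^i\bigl(e_i^+e_j+e_j^+e_i\bigr)\partial_{x^j}f$; the polarization identity $e_i^+e_j+e_j^+e_i=2\delta_{ij}$ (equivalently, your case analysis via $e_ie_j=-e_je_i$ together with the conjugation signs) kills the off-diagonal terms and doubles the diagonal ones, giving $2\deg f+4f$. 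The second form $\nabla^+Z^+ + Z\nabla$ follows by the identical computation using $e_ie_j^+ + e_je_i^+ = 2\delta_{ij}$, which is cleaner than invoking an operator-level conjugation argument (the latter needs some care since conjugation is an anti-automorphism and these are compositions of multiplication and differentiation operators, though it can be made to work). One small clarification worth making explicit: the constant $4$ arises only from the summand in which the derivative falls on the coordinate factor ($\nabla Z$ or $\nabla^+Z^+$), not from both summands, while each summand contributes one copy of $\deg$; your wording is consistent with this but could be read otherwise.
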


We introduce an operator
$$
\ddd = \underbrace{(\deg+n)(\deg+n-1) \cdots (\deg+2)}_{\text{$n-1$ operators}}.
$$

Define a function of $Z$ and $W$ taking values in
$\HC \otimes \cdots \otimes \HC$
\begin{equation}  \label{k_n-kernel}
k_{n/2}(Z-W) = \frac1{2^n} (\underbrace{\nabla_W \otimes \cdots \otimes \nabla_W}
_{\text{$n$ times}}) \frac1{N(Z-W)}
= \frac{(-1)^n}{2^n} (\underbrace{\nabla_Z \otimes \cdots \otimes \nabla_Z}
_{\text{$n$ times}}) \frac1{N(Z-W)}.
\end{equation}
Observe that
\begin{equation}  \label{k_n-symmetry}
k_{n/2}(Z-W) = (-1)^n k_{n/2}(W-Z).
\end{equation}
We have the following analogue of the Cauchy-Fueter formulas for
$n$-regular functions.

\begin{thm}  \label{Fueter-n-reg}
Let $U \subset \BB H$ be an open bounded subset with piecewise ${\cal C}^1$
boundary $\partial U$. Suppose that $f(Z)$ is left $n$-regular on a
neighborhood of the closure $\overline{U}$, then, for each $k=1,\dots,n$,
\begin{equation*}
\frac1{2\pi^2} \int_{\partial U} k_{n/2}(Z-W) \cdot
(Z \otimes \cdots \underset{\text{$k$-th place}}{\otimes Dz \otimes}
\cdots \otimes Z) \cdot f(Z)
= \begin{cases} \ddd f(W) & \text{if $W \in U$;} \\
0 & \text{if $W \notin \overline{U}$.}
\end{cases}
\end{equation*}
If $g(Z)$ is right $n$-regular on a neighborhood of the closure
$\overline{U}$, then, for each $k=1,\dots,n$,
\begin{equation*}
\frac1{2\pi^2} \int_{\partial U} g(Z) \cdot (Z \otimes \cdots
\underset{\text{$k$-th place}}{\otimes Dz \otimes} \cdots \otimes Z)
\cdot k_{n/2}(Z-W)
= \begin{cases}
\ddd  g(W) & \text{if $W \in U$;} \\
0 & \text{if $W \notin \overline{U}$.}
\end{cases}
\end{equation*}
\end{thm}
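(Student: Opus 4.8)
The plan is to reduce the statement to the classical Cauchy--Fueter formula for ordinary regular functions (the case $n=1$, proved in \cite{FL1}) together with the degree identity (\ref{deg-nabla}). I treat only the left-regular assertion; the right-regular one follows by the mirror-image argument. Let $K_k(Z-W)$ denote the classical kernel $\tfrac{(Z-W)^+}{N(Z-W)^2}$ inserted in the $k$-th tensor factor, with the identity in all other factors. Since $\tfrac{(Z-W)^+}{N(Z-W)^2}=\tfrac12\nabla_W\tfrac1{N(Z-W)}$, the definition (\ref{k_n-kernel}) factors as
$$ k_{n/2}(Z-W)=\frac1{2^{n-1}}\Bigl(\bigotimes_{j\ne k}\underset{j}{\nabla_W}\Bigr)K_k(Z-W), $$
where $\underset{j}{\nabla_W}$ is the operator $\nabla$ applied in the variable $W$ and in the $j$-th slot, and the tensor product runs over the $n-1$ indices $j\ne k$.

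First I would substitute this factorization into the integral and pull the $W$-derivatives outside the integral over $\partial U$; this is legitimate because, for $W$ near an interior point and $Z$ on the compact set $\partial U$, the integrand is smooth in $W$. Writing $h(Z)=(Z\otimes\cdots\underset{k}{\otimes 1\otimes}\cdots\otimes Z)f(Z)$, so that the $3$-form factor equals $\underset{k}{Dz}\,h(Z)$ with $Dz$ in the $k$-th slot, the left-hand integral becomes $\tfrac1{2^{n-1}}(\bigotimes_{j\ne k}\underset{j}{\nabla_W})$ applied to the inner integral $\tfrac1{2\pi^2}\int_{\partial U}K_k(Z-W)\cdot\underset{k}{Dz}\cdot h(Z)$. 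By Lemma \ref{zf-regular}, $h$ is left regular in the $k$-th place, so, regarding the slots $j\ne k$ as a spectator vector space and the $k$-th slot as the active $\BB S$-variable, the classical Cauchy--Fueter formula evaluates this inner integral to $h(W)=(W\otimes\cdots\underset{k}{\otimes 1\otimes}\cdots\otimes W)f(W)$ for $W\in U$ and to $0$ for $W\notin\overline U$. The case $W\notin\overline U$ is thereby finished, and for $W\in U$ the theorem is reduced to the operator identity
$$ \frac1{2^{n-1}}\Bigl(\bigotimes_{j\ne k}\underset{j}{\nabla_W}\Bigr)\Bigl[(W\otimes\cdots\underset{k}{\otimes 1\otimes}\cdots\otimes W)f(W)\Bigr]=\ddd f(W). $$

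To prove this identity I would apply the slot operators $\tfrac12\underset{j}{\nabla_W}$ one slot at a time, using (\ref{deg-nabla}) in the slotwise form $\tfrac12\underset{j}{\nabla_W}(\underset{j}{W}\,F)=(\deg+2)F-\tfrac12\,\underset{j}{W^+}\,\underset{j}{\nabla^+}F$, where $\underset{j}{W}$ and $\underset{j}{W^+}$ denote left multiplication by $W$ and $W^+$ in the $j$-th slot. Removing the $W$ from slot $j$ thus produces the main term $(\deg+2)$ and a correction $-\tfrac12\underset{j}{W^+}\underset{j}{\nabla^+}$ applied to the function still carrying $W$'s in the unprocessed slots. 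The crucial point is that this correction always vanishes: by the Leibniz rule the term in which $\underset{j}{\nabla^+}$ differentiates $f$ dies because $f$ is left $n$-regular, while each term in which it differentiates a factor $W$ sitting in another slot $j'$ yields $\sum_{i=0}^3(\cdots\otimes e_i\otimes\cdots\otimes e_i\otimes\cdots)$ with the $e_i$'s in slots $j$ and $j'$, acting on $f$ after commuting the remaining (other-slot) $W$'s through; this is precisely the expression annihilated by Lemma \ref{antisymmetric}.

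With the corrections gone, each slot contributes a factor $(\deg+2)$, which I then commute to the left past the remaining $\underset{j}{\nabla_W}$'s; since $\nabla_W$ lowers homogeneity by one we have $\underset{j}{\nabla_W}(\deg+c)=(\deg+c+1)\underset{j}{\nabla_W}$, so each such commutation raises the additive constant by one. Running this as an induction on the number of processed slots produces $(\deg+2)(\deg+3)\cdots(\deg+n)f=\ddd f$, as required. I expect the main obstacle to be exactly this last identity --- verifying that the correction terms vanish uniformly as the slots are processed (the interplay of Lemma \ref{antisymmetric} with the surviving $W$-factors) and keeping the commutation bookkeeping straight; the localization and the appeal to the classical formula are routine. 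The right $n$-regular statement is proved by the same argument with the roles of left and right, and the $\nabla^+$-conventions, interchanged.
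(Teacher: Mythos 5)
Your proposal is correct and follows essentially the same route as the paper: both reduce to the classical Cauchy--Fueter formula applied to $(Z\otimes\cdots\otimes 1\otimes\cdots\otimes Z)f(Z)$ via Lemma \ref{zf-regular}, and then apply the $n-1$ operators $\underset{j}{\nabla_W}$, $j\ne k$, using (\ref{deg-nabla}) together with Lemma \ref{antisymmetric} and $\underset{j}{\nabla^+}f=0$ to produce $\ddd f(W)$. The only difference is presentational --- you factor the kernel and differentiate under the integral sign where the paper differentiates both sides of the already-established identity --- and your slot-by-slot verification of the operator identity $\frac1{2^{n-1}}\bigl(\bigotimes_{j\ne k}\underset{j}{\nabla_W}\bigr)\bigl[(W\otimes\cdots\otimes 1\otimes\cdots\otimes W)f(W)\bigr]=\ddd f(W)$ correctly fills in the step the paper leaves terse.
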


\begin{proof}
By Lemma \ref{zf-regular}, the $\BB S \otimes \cdots \otimes \BB S$-valued
function
$$
(Z \otimes \cdots
\underset{\text{$k$-th place}}{\otimes 1 \otimes} \cdots \otimes Z) f(Z)
$$
is ``left regular in the $k$-th place'' in the sense that it is annihilated by
$\underset{k}{\nabla^+}$.
From the classical Cauchy-Fueter formula for left regular functions,
we obtain:
\begin{multline}  \label{old-Fueter}
\frac1{2\pi^2} \int_{\partial U} \tilde k_{1/2}(Z-W) \cdot (1 \otimes \cdots
\underset{\text{$k$-th place}}{\otimes Dz \otimes} \cdots \otimes 1) \cdot
(Z \otimes \cdots
\underset{\text{$k$-th place}}{\otimes 1 \otimes} \cdots \otimes Z) f(Z) \\
= \begin{cases}
(W \otimes \cdots
\underset{\text{$k$-th place}}{\otimes 1 \otimes} \cdots \otimes W) f(W) &
\text{if $W \in U$;} \\
0 & \text{if $W \notin \overline{U}$,}
\end{cases}
\end{multline}
where
$$
\tilde k_{1/2}(Z-W) = 1 \otimes \cdots \underset{\text{$k$-th place}}
{\otimes \frac{(Z-W)^{-1}}{N(Z-W)} \otimes} \cdots \otimes 1
= \frac12 \underset{k}{\nabla_W} \frac1{N(Z-W)}
= -\frac12 \underset{k}{\nabla_Z} \frac1{N(Z-W)}.
$$
Applying $n-1$ differential operators
$\underset{j}{\nabla}$, $j=1,\dots,n$, $j \ne k$,
%$\nabla \otimes \cdots \underset{\text{$k$-th place}}{\otimes 1 \otimes}
%\cdots \otimes \nabla$
to both sides of (\ref{old-Fueter})
(the derivative is taken with respect to $W$),
\begin{multline*}
\frac{2^{n-1}}{2\pi^2} \int_{\partial U} k_{n/2}(Z-W) \cdot
(Z \otimes \cdots \underset{\text{$k$-th place}}{\otimes Dz \otimes}
\cdots \otimes Z) \cdot f(Z) \\
= \begin{cases}
(\nabla \otimes \cdots \underset{\text{$k$-th place}}{\otimes 1 \otimes}
\cdots \otimes \nabla)
(W \otimes \cdots \underset{\text{$k$-th place}}{\otimes 1 \otimes}
\cdots \otimes W) f(W) & \text{if $W \in U$;} \\
0 & \text{if $W \notin \overline{U}$}
\end{cases} \\
= \begin{cases}
2^{n-1} \underbrace{(\deg+n)(\deg+n-1) \cdots (\deg+2)}_{\text{$n-1$ operators}} f(W)
& \text{if $W \in U$;} \\
0 & \text{if $W \notin \overline{U}$,}
\end{cases}
\end{multline*}
where the last equality follows from (\ref{deg-nabla}) and
Lemma \ref{antisymmetric}, since $\underset{j}{\nabla^+} f =0$,
for each $j=1,\dots,n$.
The case of right $n$-regular function is similar.
\end{proof}

We have an analogue of Liouville's theorem for $n$-regular functions:

\begin{cor}  \label{Liouville}
Let $f: \BB H \to \BB S \odot \cdots \odot \BB S$ be a function that is
left $n$-regular and bounded on $\BB H$, then $f$ is constant.
Similarly, if $g: \BB H \to \BB S' \odot \cdots \odot \BB S'$ is a function
that is right $n$-regular and bounded on $\BB H$, then $g$ is constant.
\end{cor}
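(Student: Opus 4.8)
The plan is to imitate the classical complex-analytic proof of Liouville's theorem, using the Cauchy-Fueter reproducing formula of Theorem \ref{Fueter-n-reg} in place of Cauchy's integral formula. Fix $W \in \BB H$ and apply that theorem with $U = B_R$, the open ball of radius $R > |W|$ centered at the origin, so that (for any fixed $k$)
\begin{equation*}
\ddd f(W) = \frac1{2\pi^2}\int_{\partial B_R} k_{n/2}(Z-W)\cdot
(Z\otimes\cdots\underset{\text{$k$-th place}}{\otimes Dz\otimes}\cdots\otimes Z)\cdot f(Z).
\end{equation*}
The integrand is smooth in $W$ for $Z \in \partial B_R$, so I may differentiate under the integral sign with respect to the real coordinates $w^\mu$ of $W$; only the kernel $k_{n/2}(Z-W)$ depends on $W$. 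My first goal is to show that $\ddd f$ is constant, and the second, more delicate step is to deduce from this that $f$ itself is constant, i.e.\ to invert the operator $\ddd$.

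For the first step I estimate the differentiated integral. Since $N(Z-W) = |Z-W|^2$ on $\BB H$ and $k_{n/2}(Z-W)$ is obtained by applying the $n$-th order constant-coefficient operator $\frac1{2^n}(\nabla_W\otimes\cdots\otimes\nabla_W)$ to $1/N(Z-W)$, one has $|k_{n/2}(Z-W)| \le C|Z-W|^{-(n+2)}$, and one further $W$-derivative gives a bound $C'|Z-W|^{-(n+3)}$. On $\partial B_R$ the $n-1$ tensor factors of $Z$ contribute a factor $R^{n-1}$ in norm while $Dz$ integrates against a surface measure of total mass $\sim R^3$; together with the assumed bound $|f| \le M$ and $|Z-W| \ge R - |W|$ this yields
\begin{equation*}
\Bigl| \frac{\partial}{\partial w^\mu}\ddd f(W) \Bigr| \le
\text{const}\cdot \frac{R^{n-1}\cdot R^3}{(R-|W|)^{n+3}}\cdot M = O(1/R) \xrightarrow{R\to\infty} 0.
\end{equation*}
As the left-hand side is independent of $R$, it vanishes; so every partial derivative of $\ddd f$ is zero and, $\BB H = \BB R^4$ being connected, $\ddd f$ is a constant $c$.

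The main obstacle is the passage from ``$\ddd f$ constant'' to ``$f$ constant,'' which amounts to inverting $\ddd = (\deg+n)\cdots(\deg+2)$. Here I would use that $f$, being $n$-regular, is harmonic and therefore real-analytic on $\BB H$. Expanding $f$ near the origin into homogeneous components $f = \sum_{d\ge 0} f_d$ with $\deg f_d = d\,f_d$, and using that $\ddd$ preserves homogeneity degree, I get $\ddd f_d = \bigl[\prod_{m=2}^{n}(d+m)\bigr] f_d$. Matching homogeneous parts in $\ddd f = c$ forces $\bigl[\prod_{m=2}^n(d+m)\bigr] f_d = 0$ for every $d \ge 1$; since the scalar $(d+2)\cdots(d+n)$ is strictly positive for $d \ge 1$ (and the product is empty, equal to $1$, when $n=1$, where $\ddd$ is the identity), all $f_d$ with $d \ge 1$ vanish. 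Thus $f$ equals the constant $f_0$ on a neighborhood of the origin, and by real-analyticity on the connected set $\BB H$ it is constant everywhere. The right $n$-regular case is identical, using the second formula of Theorem \ref{Fueter-n-reg}.

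Finally, I note a shortcut that bypasses the inversion of $\ddd$ entirely: since each component of $f$ is a bounded harmonic function on $\BB R^4$, the classical Liouville theorem for harmonic functions gives that $f$ is constant at once. The Cauchy-Fueter argument above is nonetheless the natural intrinsic analogue, as it relies only on the reproducing formula established in this section.
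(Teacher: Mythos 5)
Your proof is correct and follows essentially the same route as the paper: differentiate the Cauchy--Fueter formula of Theorem \ref{Fueter-n-reg} under the integral sign and let $R \to \infty$ to conclude that all partial derivatives of $\ddd f$ vanish, so $\ddd f$ is constant. Your homogeneous-component argument correctly supplies the passage from ``$\ddd f$ constant'' to ``$f$ constant'' that the paper leaves implicit, and your closing observation that the statement also follows at once from Liouville's theorem for bounded harmonic functions (since $n$-regular functions are harmonic) is valid.
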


\begin{proof}
The proof is essentially the same as for the (classical) left and right
regular functions on $\BB H$, so we only give a sketch of the first part.
From Theorem \ref{Fueter-n-reg} we have:
\begin{equation*}
\frac{\partial}{\partial x^0} \ddd f(X)
= \frac1{2\pi^2} \int_{S^3_R} \frac{\partial k_{n/2}(Z-X)}{\partial x^0}
\cdot (Dz \otimes Z \otimes \cdots \otimes Z) \cdot f(Z),
\end{equation*}
where $S^3_R \subset \BB H$ is the three-dimensional sphere of radius $R$
centered at the origin
$$
S^3_R = \{ X \in \BB H ;\: N(X)=R^2 \}
$$
with $R^2>N(X)$.
If $f$ is bounded, one easily shows that the integral on the right hand side
tends to zero as $R \to \infty$. Thus
$$
\frac{\partial}{\partial x^0}  \ddd f =0.
$$
Similarly, the other partial derivatives
$$
\frac{\partial}{\partial x^1}  \ddd f
= \frac{\partial}{\partial x^2}  \ddd f
= \frac{\partial}{\partial x^3}  \ddd f = 0.
$$
It follows that $\ddd f$ and hence $f$
are constant.
\end{proof}

\section{Expansion of the Cauchy-Fueter Kernel for $n$-Regular Functions}  \label{expansion-section}

We often identify $\HC$ with $2 \times 2$ matrices with complex entries.
Similarly, it will be convenient to identify
$\underbrace{\HC \otimes \cdots \otimes \HC}_{\text{$n$ times}}$
with complex $2^n \times 2^n$ matrices using the Kronecker product.
Let $\BB C^{k \times k}$ denote the algebra of $k \times k$ complex matrices.
For example, if $A = \bigl( \begin{smallmatrix} a_{11} & a_{12} \\
a_{21} & a_{22} \end{smallmatrix} \bigr),
B = \bigl( \begin{smallmatrix} b_{11} & b_{12} \\
b_{21} & b_{22} \end{smallmatrix} \bigr) \in \BB C^{2 \times 2}$,
then their Kronecker product is
$$
A \otimes B = \begin{pmatrix} a_{11}B & a_{12}B \\ a_{21}B & a_{22}B \end{pmatrix}
= \begin{pmatrix} a_{11}b_{11} & a_{11}b_{12} & a_{12}b_{11} & a_{12}b_{12} \\
a_{11}b_{21} & a_{11}b_{22} & a_{12}b_{21} & a_{12}b_{22} \\
a_{21}b_{11} & a_{21}b_{12} & a_{22}b_{11} & a_{22}b_{12} \\
a_{21}b_{21} & a_{21}b_{22} & a_{22}b_{21} & a_{22}b_{22} \end{pmatrix}
\in \BB C^{4 \times 4}.
$$
It is easy to see that the Kronecker product satisfies
$$
(A \otimes B) (C \otimes D) = (AC) \otimes (BD).
$$

Next we recall the matrix coefficients $t^l_{\nu\,\underline{\mu}}(Z)$'s of $SU(2)$
described by equation (27) of \cite{FL1} (cf. \cite{V}):
\begin{equation}  \label{t}
t^l_{\nu\,\underline{\mu}}(Z) = \frac 1{2\pi i}
\oint (sz_{11}+z_{21})^{l-\mu} (sz_{12}+z_{22})^{l+\mu} s^{-l+\nu} \,\frac{ds}s,
\qquad
\begin{smallmatrix} l = 0, \frac12, 1, \frac32, \dots, \\
\mu,\nu \in \BB Z +l, \\ -l \le \mu,\nu \le l, \end{smallmatrix}
\end{equation}
$Z=\bigl(\begin{smallmatrix} z_{11} & z_{12} \\
z_{21} & z_{22} \end{smallmatrix}\bigr) \in \HC$,
the integral is taken over a loop in $\BB C$ going once around the origin
in the counterclockwise direction.
We regard these functions as polynomials on $\HC$.

Since $\square t^l_{\nu\,\underline{\mu}}(Z) =0$ and
$\square \bigl( N(Z)^{-1} \cdot t^l_{\mu\,\underline{\nu}}(Z^{-1}) \bigr) =0$,
by the observation made after Definition \ref {nr-definition},
the columns and rows of the two $2^n \times 2^n$ matrices
$$
(\underbrace{\partial \otimes \cdots \otimes \partial}_{\text{$n$ times}})
t^l_{\nu\,\underline{\mu}}(Z)
\qquad \text{and} \qquad
(\underbrace{\partial \otimes \cdots \otimes \partial}_{\text{$n$ times}})
\bigl( N(Z)^{-1} \cdot t^l_{\nu\,\underline{\mu}}(Z^{-1}) \bigr)
$$
are respectively left and right $n$-regular.
We use this to construct bases of left and right $n$-regular functions.
Let indices $l$, $\mu$ and $\nu$ range as follows:
$$
l = 0, \frac12, 1, \frac32, \dots, \quad
\mu \in \BB Z +l+n/2, \quad
\nu \in \BB Z +l, \quad
-l-n/2 \le \mu \le l+n/2, \quad
-l \le \nu \le l.
$$
Introduce left $n$-regular functions $\HC \to \BB S \odot \cdots \odot \BB S$
\begin{equation}  \label{F-def}
F^{(n)}_{l,\mu,\nu}(Z) = \biggl( \underbrace{
\begin{pmatrix} \partial_{11} \\ \partial_{12} \end{pmatrix}
\otimes \cdots \otimes
\begin{pmatrix} \partial_{11} \\ \partial_{12} \end{pmatrix}}_{\text{$n$ times}}
\biggr) t^{l+n/2}_{\nu-n/2\,\underline{\mu}}(Z)
= \biggl( \underbrace{
\begin{pmatrix} \partial_{21} \\ \partial_{22} \end{pmatrix}
\otimes \cdots \otimes
\begin{pmatrix} \partial_{21} \\ \partial_{22} \end{pmatrix}}_{\text{$n$ times}}
\biggr) t^{l+n/2}_{\nu+n/2\,\underline{\mu}}(Z)
\end{equation}
and right $n$-regular functions $\HC \to \BB S' \odot \cdots \odot \BB S'$
$$
G^{(n)}_{l,\mu,\nu}(Z) = %\frac1{(l-\nu+n) (l-\mu+n-1) \cdots (l-\nu+1)}
\frac{(l-\nu)!}{(l-\nu+n)!}
\bigl( \underbrace{ (\partial_{11}, \partial_{21}) \otimes \cdots \otimes
(\partial_{11}, \partial_{21})}_{\text{$n$ times}} \bigr)
t^{l+n/2}_{\mu\,\underline{\nu-n/2}}(Z).
$$
In order to construct the dual basis, we consider left $n$-regular
functions $\HC^{\times} \to \BB S \odot \cdots \odot \BB S$
$$
(-1)^n \biggl( \underbrace{
\begin{pmatrix} \partial_{11} \\ \partial_{12} \end{pmatrix}
\otimes \cdots \otimes
\begin{pmatrix} \partial_{11} \\ \partial_{12} \end{pmatrix}}_{\text{$n$ times}}
 \biggr) \bigl( N(Z)^{-1} \cdot t^l_{\nu\,\underline{\tilde\mu}}(Z^{-1}) \bigr);
$$
using Lemmas 22 and 23 in \cite{FL1}, one can see that
%by equation (37) in \cite{FL4},
these are columns with entries being some scalar multiples of
$$
N(Z)^{-1} \cdot t^{l+n/2}_{\nu-n/2\,\underline{\tilde\mu-n/2}}(Z^{-1}), \dots,
N(Z)^{-1} \cdot t^{l+n/2}_{\nu+n/2\,\underline{\tilde\mu-n/2}}(Z^{-1}).
$$
Shift the index $\tilde\mu$ so that $\tilde\mu-n/2 = \mu$ and call the
resulting function $F'^{(n)}_{l,\mu,\nu}(Z)$.
Note that this is not always the same as differentiating
$N(Z)^{-1} \cdot t^l_{\nu\,\underline{\mu+n/2}}(Z^{-1})$, since
$\mu+n/2$ may be bigger than $l$, which is outside of allowed range.
%$$
%F'^{(n)}_{l,\mu,\nu}(Z) = (-1)^n \biggl( \underbrace{
%\begin{pmatrix} \partial_{11} \\ \partial_{12} \end{pmatrix}
%\otimes \cdots \otimes
%\begin{pmatrix} \partial_{11} \\ \partial_{12} \end{pmatrix}}_{\text{$n$ times}}
%\bigl( N(Z)^{-1} \cdot t^l_{\nu\,\underline{\mu+n/2}}(Z^{-1}) \bigr)
%$$
Similarly, consider right $n$-regular functions
$\HC^{\times} \to \BB S' \odot \cdots \odot \BB S'$
$$
(-1)^n \frac{(l-\tilde\mu)!}{(l-\tilde\mu+n)!}
\bigl( \underbrace{ (\partial_{11}, \partial_{21}) \otimes \cdots \otimes
(\partial_{11}, \partial_{21})}_{\text{$n$ times}} \bigr)
\bigl( N(Z)^{-1} \cdot t^l_{\tilde\mu\,\underline{\nu}}(Z^{-1}) \bigr);
$$
using Lemmas 22 and 23 in \cite{FL1}, one can see that
%by equation (37) in \cite{FL4},
these are rows with entries being
$$
N(Z)^{-1} \cdot t^{l+n/2}_{\tilde\mu-n/2\,\underline{\nu-n/2}}(Z^{-1}), \dots,
N(Z)^{-1} \cdot t^{l+n/2}_{\tilde\mu-n/2\,\underline{\nu+n/2}}(Z^{-1}).
$$
Shift the index $\tilde\mu$ so that $\tilde\mu-n/2 = \mu$ and call the
resulting function $G'^{(n)}_{l,\mu,\nu}(Z)$.
Again, this is not always the same as differentiating
$N(Z)^{-1} \cdot t^l_{\mu+n/2\,\underline{\nu}}(Z^{-1})$, since
$\mu+n/2$ may be bigger than $l$.
%$$
%G'^{(n)}_{l,m,n}(Z) = (-1)^n \frac{(l-\mu-n/2)!}{(l-\mu+n/2)!}
%\bigl( \underbrace{ (\partial_{11}, \partial_{21}) \otimes \cdots \otimes
%(\partial_{11}, \partial_{21})}_{\text{$n$ times}} \bigr)
%\bigl( N(Z)^{-1} \cdot t^l_{\mu+n/2\,\underline{\nu}}(Z^{-1}) \bigr)
%$$
Note that $F^{(1)}_{l,\mu,\nu}(Z)$, $F'^{(1)}_{l,\mu,\nu}(Z)$,
$G^{(1)}_{l,\mu,\nu}(Z)$ and $G'^{(1)}_{l,\mu,\nu}(Z)$ are exactly the basis
functions that appear in Proposition 24 in \cite{FL1}:
\begin{align*}
F^{(1)}_{l,\mu,\nu}(Z)
&= \begin{pmatrix} (l-\mu+1/2)t^l_{\nu\,\underline{\mu+1/2}}(Z) \\
(l+\mu+1/2)t^l_{\nu\,\underline{\mu-1/2}}(Z) \end{pmatrix},  \\
F'^{(1)}_{l,\mu,\nu}(Z) &= \begin{pmatrix}
(l-\nu+1) N(Z)^{-1} \cdot t^{l+1/2}_{\nu-1/2\,\underline{\mu}}(Z^{-1}) \\
(l+\nu+1) N(Z)^{-1} \cdot t^{l+1/2}_{\nu+1/2\,\underline{\mu}}(Z^{-1})
\end{pmatrix},  \\
G^{(1)}_{l,\mu,\nu}(Z) &= \bigl( t^l_{\mu+1/2\,\underline{\nu}}(Z),
t^l_{\mu-1/2\,\underline{\nu}}(Z) \bigr),  \\
G'^{(1)}_{l,\mu,\nu}(Z) &=
\bigl( N(Z)^{-1} \cdot t^{l+1/2}_{\mu\,\underline{\nu-1/2}}(Z^{-1}),
N(Z)^{-1} \cdot t^{l+1/2}_{\mu\,\underline{\nu+1/2}}(Z^{-1}) \bigr).
\end{align*}

\begin{lem}  \label{recursive-lem}
We have the following recursive relations between the functions
$F^{(n)}_{l,\mu,\nu}(Z)$, $F'^{(n)}_{l,\mu,\nu}(Z)$, $G^{(n)}_{l,\mu,\nu}(Z)$ and
$G'^{(n)}_{l,\mu,\nu}(Z)$.
$$
F^{(n+1)}_{l,\mu,\nu}(Z) =
\begin{pmatrix} \partial_{11} F^{(n)}_{l+1/2,\mu,\nu-1/2}(Z) \\
\partial_{12} F^{(n)}_{l+1/2,\mu,\nu-1/2}(Z) \end{pmatrix}
= \begin{pmatrix} \partial_{21} F^{(n)}_{l+1/2,\mu,\nu+1/2}(Z) \\
\partial_{22} F^{(n)}_{l+1/2,\mu,\nu+1/2}(Z) \end{pmatrix},
$$
$$
F'^{(n+1)}_{l,\mu-1/2,\nu}(Z) =
- \begin{pmatrix} \partial_{11} F'^{(n)}_{l,\mu,\nu}(Z) \\
\partial_{12} F^{(n)}_{l,\mu,\nu}(Z) \end{pmatrix}, \qquad
F'^{(n+1)}_{l,\mu+1/2,\nu}(Z) =
- \begin{pmatrix} \partial_{21} F'^{(n)}_{l,\mu,\nu}(Z) \\
\partial_{22} F^{(n)}_{l,\mu,\nu}(Z) \end{pmatrix},
$$
$$
G^{(n+1)}_{l,\mu,\nu}(Z) = \bigl( G^{(n)}_{l,\mu+1/2,\nu}(Z),
G^{(n)}_{l,\mu-1/2,\nu}(Z) \bigr),
$$
$$
G'^{(n+1)}_{l,\mu,\nu}(Z) = \bigl( G'^{(n)}_{l+1/2,\mu,\nu-1/2}(Z),
G'^{(n)}_{l+1/2,\mu,\nu+1/2}(Z) \bigr).
$$
\end{lem}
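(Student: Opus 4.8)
The common engine behind all four recursions is a set of first-order ``ladder'' identities recording how each $\partial_{ij}$ acts on the matrix coefficients $t^l_{\nu\,\underline\mu}(Z)$ and on the inverted kernels $N(Z)^{-1}t^l_{\nu\,\underline\mu}(Z^{-1})$: applied to $t^l_{\nu\,\underline\mu}$ the operator $\partial_{ij}$ lowers $l$ by $\tfrac12$ and shifts the two lower indices by $\pm\tfrac12$ (the first by $+\tfrac12$ for $\partial_{1j}$ and $-\tfrac12$ for $\partial_{2j}$, the underlined one by $+\tfrac12$ for $\partial_{i1}$ and $-\tfrac12$ for $\partial_{i2}$), with a scalar coefficient linear in $l,\mu$; applied to the inverted kernel it instead raises $l$ by $\tfrac12$ with an analogous rule and coefficient. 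These are Lemmas 22 and 23 of \cite{FL1}, and for present purposes they can be read off directly from the $n=1$ formulas for $F^{(1)},F'^{(1)},G^{(1)},G'^{(1)}$ displayed above. The plan is to state these identities once and then induct on $n$ by peeling off the outermost tensor factor of the defining operator.

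For the two $F$-relations I would argue purely formally, with no ladder coefficient entering. By (\ref{F-def}), $F^{(n+1)}_{l,\mu,\nu}$ is $\begin{pmatrix}\partial_{11}\\\partial_{12}\end{pmatrix}^{\otimes(n+1)}$ applied to $t^{l+(n+1)/2}_{\nu-(n+1)/2\,\underline\mu}$; splitting off the outermost factor and rewriting $l+\tfrac{n+1}2=(l+\tfrac12)+\tfrac n2$, $\nu-\tfrac{n+1}2=(\nu-\tfrac12)-\tfrac n2$ identifies the inner $n$-fold expression as $F^{(n)}_{l+1/2,\mu,\nu-1/2}$, giving the first equality; the second is verbatim from the $\begin{pmatrix}\partial_{21}\\\partial_{22}\end{pmatrix}$-representation in (\ref{F-def}). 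The two $F'$-relations run the same way once one records the companion $S$-representations $F'^{(n)}_{l,\mu,\nu}=(-1)^n\begin{pmatrix}\partial_{11}\\\partial_{12}\end{pmatrix}^{\otimes n}\bigl(N(Z)^{-1}t^l_{\nu\,\underline{\mu+n/2}}(Z^{-1})\bigr)=(-1)^n\begin{pmatrix}\partial_{21}\\\partial_{22}\end{pmatrix}^{\otimes n}\bigl(N(Z)^{-1}t^l_{\nu\,\underline{\mu-n/2}}(Z^{-1})\bigr)$, the equality of the two following from the inverted ladder identities; peeling the outer factor off each and tracking the sign $(-1)^n\mapsto(-1)^{n+1}$ produces the two relations, with $\mu$ moving to $\mu-\tfrac12$ and $\mu+\tfrac12$ respectively.

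The $G$- and $G'$-relations are where the factorial normalizations do the real work. Writing $G^{(n+1)}_{l,\mu,\nu}=\tfrac{(l-\nu)!}{(l-\nu+n+1)!}(\partial_{11},\partial_{21})^{\otimes(n+1)}\,t^{l+(n+1)/2}_{\mu\,\underline{\nu-(n+1)/2}}$ and peeling off the leftmost row factor $(\partial_{11},\partial_{21})$ splits the row into a $\partial_{11}$-half and a $\partial_{21}$-half; the $t$-ladder identity rewrites $\partial_{11}t^{l+(n+1)/2}_{\mu\,\underline{\nu-(n+1)/2}}$ as $(l-\nu+n+1)\,t^{l+n/2}_{\mu+1/2\,\underline{\nu-n/2}}$, and the scalar $(l-\nu+n+1)$ turns $\tfrac{(l-\nu)!}{(l-\nu+n+1)!}$ into $\tfrac{(l-\nu)!}{(l-\nu+n)!}$, recovering $G^{(n)}_{l,\mu+1/2,\nu}$; the $\partial_{21}$-half gives $G^{(n)}_{l,\mu-1/2,\nu}$. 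The $G'$-relation is identical with the inverted ladder identity in place of the ordinary one: its coefficient again cancels the factorial prefactor to leave $1$, now producing the weight shift $l\mapsto l+\tfrac12$ together with $\nu\mapsto\nu\mp\tfrac12$, i.e. the two halves $G'^{(n)}_{l+1/2,\mu,\nu\mp1/2}$.

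The one genuinely delicate point, and the step I expect to cost the most care, is the range restriction flagged in the construction of $F'$ and $G'$: the identification with derivatives of $N(Z)^{-1}t^l(Z^{-1})$ is literal only while $\mu+n/2\le l$, since otherwise the underlined index $\mu+n/2$ exceeds $l$ and $t^l_{\nu\,\underline{\mu+n/2}}$ falls outside the admissible range. For the finitely many remaining values of $\mu$ the formal peeling of the previous paragraphs is not directly available, and the companion $S$-representations are not literally derivatives of a single inverted kernel. I would dispose of these cases by abandoning the operator picture and verifying the recursions entrywise on the explicit entries $N(Z)^{-1}t^{l+n/2}_{\nu+j\,\underline\mu}(Z^{-1})$, which are well defined throughout $-l-\tfrac n2\le\mu\le l+\tfrac n2$; the inverted ladder identities hold as identities among these functions on the whole range, so the same coefficient bookkeeping closes the argument uniformly. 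The residual risk is then purely clerical: keeping the factorials, the $\pm\tfrac12$ index shifts, and the signs consistent across the two representations.
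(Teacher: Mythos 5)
The paper states this lemma without proof, treating it as immediate from the definitions, and your argument --- peeling the outermost tensor factor off the defining operators $\bigl(\begin{smallmatrix}\partial_{11}\\ \partial_{12}\end{smallmatrix}\bigr)^{\otimes n}$, $(\partial_{11},\partial_{21})^{\otimes n}$ and using the ladder identities of Lemmas 22--23 of \cite{FL1} so that the ladder coefficients exactly cancel the factorial normalizations of $G^{(n)}$ and $G'^{(n)}$ --- is precisely the intended verification and is correct, including your entrywise treatment of the index-range caveat for $F'$ and $G'$. (The $F^{(n)}$ in the second components of the two $F'$ recursions is evidently a typo for $F'^{(n)}$, consistent with your reading.)
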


Next, we derive two expansions of the Cauchy-Fueter kernel for $n$-regular
functions (\ref{k_n-kernel}) in terms of these functions $F^{(n)}_{l,\mu,\nu}(Z)$,
$F'^{(n)}_{l,\mu,\nu}(Z)$, $G^{(n)}_{l,\mu,\nu}(Z)$ and $G'^{(n)}_{l,\mu,\nu}(Z)$.
This is an $n$-regular function analogue of Proposition 26 from \cite{FL1}
for the usual regular functions (see also Proposition 112 in \cite{FL4})
and Proposition 12 in \cite{FL4} for doubly regular functions.

\begin{prop}  \label{expansion-prop}
We have the following expansions
\begin{equation}  \label{k_n-expansion}
k_{n/2}(Z-W) = \sum_{l,\mu,\nu} F^{(n)}_{l,\mu,\nu}(W) \cdot G'^{(n)}_{l,\mu,\nu}(Z)
= \sum_{l,\mu,\nu} F'^{(n)}_{l.\mu,\nu}(Z) \cdot G^{(n)}_{l,\mu,\nu}(W),
\end{equation}
%\begin{equation}  \label{k_n-expansion-1}
%k_{n/2}(Z-W) = \sum_{l,\mu,\nu} F^{(n)}_{l,\mu,\nu}(W) \cdot G'^{(n)}_{l,\mu,\nu}(Z)
%\end{equation}
%and
%\begin{equation}  \label{k_n-expansion-2}
%k_{n/2}(Z-W) = \sum_{l,\mu,\nu} F'^{(n)}_{l.\mu,\nu}(Z) \cdot G^{(n)}_{l,\mu,\nu}(W),
%\end{equation}
which converge uniformly on compact subsets in the region
$\{ (Z,W) \in \HC^{\times} \times \HC; \: WZ^{-1} \in \BB D^+ \}$.
The sums are taken first over all $\mu=-l-n/2,-l,\dots,l+n/2$, 
$\nu=-l,-l+1,\dots,l$, then over $l=0,\frac12,1,\frac32,\dots$.
\end{prop}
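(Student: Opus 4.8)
The plan is to argue by induction on $n$, using the recursion relations of Lemma \ref{recursive-lem} together with a matching recursion for the kernel itself. The base case $n=1$ is exactly Proposition 26 of \cite{FL1}, which supplies both claimed expansions for $k_{1/2}(Z-W) = \frac{(Z-W)^{-1}}{N(Z-W)}$, valid and uniformly convergent on compacta in $\{WZ^{-1}\in\BB D^+\}$. For the inductive engine I first record that, because the operators $\underset{1}{\nabla_W},\dots,\underset{n+1}{\nabla_W}$ all commute when applied to the scalar function $1/N(Z-W)$, formula (\ref{k_n-kernel}) gives
$$
k_{(n+1)/2}(Z-W) = \tfrac12\,\underset{n+1}{\nabla_W}\,k_{n/2}(Z-W)
= -\tfrac12\,\underset{n+1}{\nabla_Z}\,k_{n/2}(Z-W),
$$
i.e.\ $k_{(n+1)/2}$ is obtained from $k_{n/2}$ by inserting one fresh factor $\tfrac12\nabla = \partial$ in a new $(n+1)$-st tensor slot (differentiating in $W$, or equivalently in $Z$ with a sign by the second equality in (\ref{k_n-kernel})).

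For the first expansion I would apply $\tfrac12\underset{n+1}{\nabla_W}$ term-by-term to the assumed identity $k_{n/2}(Z-W) = \sum F^{(n)}_{l,\mu,\nu}(W)\cdot G'^{(n)}_{l,\mu,\nu}(Z)$. Since $G'^{(n)}_{l,\mu,\nu}(Z)$ does not depend on $W$, the operator acts only on $F^{(n)}_{l,\mu,\nu}(W)$ and creates a new $\BB S\otimes\BB S'$ slot: its new $\BB S$-index records the derivative of $F^{(n)}$, while its new $\BB S'$-index is the free column index of the matrix $\partial = \tfrac12\nabla$. Reading the $F$-recursion of Lemma \ref{recursive-lem} backwards identifies the two columns of $\partial_W F^{(n)}_{l,\mu,\nu}(W)$ with $F^{(n+1)}_{l-1/2,\mu,\nu+1/2}(W)$ and $F^{(n+1)}_{l-1/2,\mu,\nu-1/2}(W)$, and the $G'$-recursion $G'^{(n+1)}_{l,\mu,\nu}(Z) = \bigl( G'^{(n)}_{l+1/2,\mu,\nu-1/2}(Z), G'^{(n)}_{l+1/2,\mu,\nu+1/2}(Z) \bigr)$ shows that the accompanying $W$-independent factor $G'^{(n)}_{l,\mu,\nu}(Z)$ is precisely the matching new-$\BB S'$-component. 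Thus after the substitution $(l,\mu,\nu)\mapsto(l-1/2,\mu,\nu\pm1/2)$ each differentiated term splits into two pieces that are then collected, two at a time, into the products $F^{(n+1)}_{l',\mu',\nu'}(W)\cdot G'^{(n+1)}_{l',\mu',\nu'}(Z)$; summing gives the claimed expansion at level $n+1$.

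The second expansion is handled symmetrically: apply $-\tfrac12\underset{n+1}{\nabla_Z}$ to $\sum F'^{(n)}_{l,\mu,\nu}(Z)\cdot G^{(n)}_{l,\mu,\nu}(W)$ and use the $F'$- and $G$-recursions of Lemma \ref{recursive-lem}, with base case the second half of Proposition 26 of \cite{FL1}; alternatively one can deduce it from the first expansion via the symmetry (\ref{k_n-symmetry}), $k_{n/2}(Z-W)=(-1)^n k_{n/2}(W-Z)$, which interchanges the roles of $(F,G')$ and $(F',G)$ under $Z\leftrightarrow W$.

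Convergence is inherited from $n=1$: differentiating a series of holomorphic functions that converges uniformly on compact subsets of $\{WZ^{-1}\in\BB D^+\}$ again converges uniformly on compacta of the same region (Cauchy estimates / Weierstrass), so the level-$(n+1)$ series converges uniformly on compacta there as well, and this absolute convergence legitimizes the termwise differentiation and the reindexing. I expect the main obstacle to be the index bookkeeping in the inductive step: one must verify that the new $\BB S'$-column is \emph{distributed} across two distinct target terms (and, conversely, that each target term $(l',\mu',\nu')$ receives exactly one contribution from each of the source terms $(l'+1/2,\mu',\nu'\mp1/2)$), and one must check the boundary index cases flagged after the definitions of $F'^{(n)}$ and $G'^{(n)}$ --- those where $\mu\pm n/2$ or $\nu\pm n/2$ leaves the admissible range $-l\le\cdot\le l$ --- so that the index shifts in Lemma \ref{recursive-lem} stay within the ranges over which the sums are taken.
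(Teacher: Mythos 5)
Your proposal is correct and follows essentially the same route as the paper: induction on $n$ with base case Proposition 26 of \cite{FL1}, using the recursions of Lemma \ref{recursive-lem} to match the level-$(n+1)$ sum with $\partial_W$ (in a fresh tensor slot) applied to the level-$n$ kernel expansion. The paper runs the computation from the $(n+1)$-level sum down to $(\partial_{ij})_W k_{n/2}(Z-W)$ rather than differentiating the $n$-level identity upward, but this is the same argument read in the opposite direction, with the same index bookkeeping you describe.
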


\begin{proof}
We use induction on $n$. If $n=1$, the result reduces to
Proposition 26 in \cite{FL1} (see also Proposition 112 in \cite{FL4}).
To prove the inductive step, we use the recursive relations from
Lemma \ref{recursive-lem} to expand
$$
\sum_{l,\mu,\nu} F^{(n+1)}_{l,\mu,\nu}(W) \cdot G'^{(n+1)}_{l,\mu,\nu}(Z)
= \begin{pmatrix} A_{11} & A_{12} \\ A_{21} & A_{22} \end{pmatrix},
$$
where
\begin{align*}
A_{11} &= \sum_{l,\mu,\nu} (\partial_{11})_W F^{(n)}_{l+1/2,\mu,\nu-1/2}(W)
\cdot G'^{(n)}_{l+1/2,\mu,\nu-1/2}(Z)
= (\partial_{11})_W k_{n/2}(Z-W),  \\
A_{12} &= \sum_{l,\mu,\nu} (\partial_{21})_W F^{(n)}_{l+1/2,\mu,\nu+1/2}(W)
\cdot G'^{(n)}_{l+1/2,\mu,\nu+1/2}(Z)
= (\partial_{21})_W k_{n/2}(Z-W),  \\
A_{21} &= \sum_{l,\mu,\nu} (\partial_{12})_W F^{(n)}_{l+1/2,\mu,\nu-1/2}(W)
\cdot G'^{(n)}_{l+1/2,\mu,\nu-1/2}(Z)
= (\partial_{12})_W k_{n/2}(Z-W),  \\
A_{22} &= \sum_{l,\mu,\nu} (\partial_{22})_W F^{(n)}_{l+1/2,\mu,\nu+1/2}(W)
\cdot G'^{(n)}_{l+1/2,\mu,\nu+1/2}(Z)
= (\partial_{22})_W k_{n/2}(Z-W)
\end{align*}
by induction hypothesis. This proves
\begin{multline*}
\sum_{l,\mu,\nu} F^{(n+1)}_{l,\mu,\nu}(W) \cdot G'^{(n+1)}_{l,\mu,\nu}(Z)  \\
= \begin{pmatrix} (\partial_{11})_W k_{n/2}(Z-W) &
(\partial_{21})_W k_{n/2}(Z-W) \\ (\partial_{12})_W k_{n/2}(Z-W) &
(\partial_{22})_W k_{n/2}(Z-W) \end{pmatrix}
= k_{(n+1)/2}(Z-W).
\end{multline*}
The other expansion is proved similarly.
%$$
%\sum_{l,\mu,\nu} F'^{(n+1)}_{l,\mu,\nu}(Z) \cdot G^{(n+1)}_{l,\mu,\nu}(W)
%= \begin{pmatrix} B_{11} & B_{12} \\ B_{21} & B_{22} \end{pmatrix},
%$$
%where
%\begin{align*}
%B_{11} &= \sum_{l,\mu,\nu} -(\partial_{11})_Z F'^{(n)}_{l,\mu+1/2,\nu}(Z)
%\cdot G^{(n)}_{l,\mu+1/2,\nu}(W)
%= -(\partial_{11})_Z k_{n/2}(Z-W),  \\
%B_{12} &= \sum_{l,\mu,\nu} -(\partial_{21})_Z F'^{(n)}_{l,\mu-1/2,\nu}(Z)
%\cdot G^{(n)}_{l,\mu-1/2,\nu}(W)
%= -(\partial_{21})_Z k_{n/2}(Z-W),  \\
%B_{21} &= \sum_{l,\mu,\nu} -(\partial_{12})_Z F'^{(n)}_{l,\mu+1/2,\nu}(Z)
%\cdot G^{(n)}_{l,\mu+1/2,\nu}(W)
%= -(\partial_{12})_Z k_{n/2}(Z-W),  \\
%B_{22} &= \sum_{l,\mu,\nu} -(\partial_{22})_Z F'^{(n)}_{l,\mu-1/2,\nu}(Z)
%\cdot G^{(n)}_{l,\mu-1/2,\nu}(W)
%= -(\partial_{22})_Z k_{n/2}(Z-W).
%\end{align*}
\end{proof}

\section{$n$-Regular Functions on $\BB H^{\times}$}  \label{deg-inverse-section}

In this section we show that, if a (left or right) $n$-regular function is
defined on all of $\BB H^{\times}$, then the operators $(\deg+m)$, $m=2,\dots,n$,
can be inverted.
Thus the operator $\ddd = (\deg+n) \cdots (\deg+2)$ can be inverted as well.
This will be needed, for example, when we define the invariant bilinear pairing
for such functions.

We start with a left $n$-regular function
$f: \BB H^{\times} \to \BB S \odot \cdots \odot \BB S$
and derive some properties of such functions.
Of course, right $n$-regular functions
$g: \BB H^{\times} \to \BB S' \odot \cdots \odot \BB S'$ have similar properties.
Let $0<r<R$, then, by the Cauchy-Fueter formula for $n$-regular functions
(Theorem \ref{Fueter-n-reg}),
\begin{multline*}
\ddd f(W) =
\frac1{2\pi^2} \int_{S^3_R} k_{n/2}(Z-W) \cdot
(Dz \otimes Z \otimes \cdots \otimes Z) \cdot f(Z)\\
- \frac1{2\pi^2} \int_{S^3_r} k_{n/2}(Z-W) \cdot
(Dz \otimes Z \otimes \cdots \otimes Z) \cdot f(Z),
\end{multline*}
for all $W \in \BB H$ such that $r^2<N(W)<R^2$, where
$S^3_R \subset \BB H$ is the sphere of radius $R$ centered at the origin
$$
S^3_R = \{ X \in \BB H ;\: N(X)=R^2 \}.
$$
Define functions $\tilde f_+: \BB H \to \BB S \odot \cdots \odot \BB S$
and $\tilde f_-: \BB H^{\times} \to \BB S \odot \cdots \odot \BB S$ by
$$
\tilde f_+(W) =
\frac1{2\pi^2} \int_{S^3_R} k_{n/2}(Z-W) \cdot
(Dz \otimes Z \otimes \cdots \otimes Z) \cdot f(Z), \qquad R^2>N(W),
$$
$$
\tilde f_-(W) =
- \frac1{2\pi^2} \int_{S^3_r} k_{n/2}(Z-W) \cdot
(Dz \otimes Z \otimes \cdots \otimes Z) \cdot f(Z), \qquad r^2<N(W).
$$
Note that $\tilde f_+$ and $\tilde f_-$ are left $n$-regular on their
respective domains and that $\tilde f_-(W)$ decays at infinity at
a rate $\sim N(W)^{-1-n/2}$.

For a function $\phi$ defined on $\BB H$ or, slightly more generally,
on a star-shaped open subset of $\BB H$ centered at the origin, let
$$
\bigl( (\deg+m)^{-1} \phi \bigr)(Z) = \int_0^1 t^{m-1} \cdot \phi(tZ) \,dt,
\qquad m \ge 1,
$$
(cf. Subsection 2.4 of \cite{FL4}).
Similarly, for a function $\phi$ defined on $\BB H^{\times}$ and decaying
sufficiently fast at infinity, we can define $(\deg+m)^{-1} \phi$ as
$$
\bigl( (\deg+m)^{-1} \phi \bigr)(Z)
= - \int_1^{\infty} t^{m-1} \cdot \phi(tZ) \,dt.
$$
Then
$$
(\deg+m) \bigl( (\deg+m)^{-1} \phi \bigr)
= (\deg+m)^{-1} \bigl( (\deg+m) \phi \bigr) = \phi
$$
for functions $\phi$ that are either defined on star-shaped open subsets of
$\BB H$ centered at the origin or on $\BB H^{\times}$ and decaying sufficiently
fast at infinity. (In the same fashion one can also define $(\deg+m)^{-1}\phi$
for functions defined on star-shaped open subsets of $\HC$ centered at
the origin or on $\HC^{\times}$ and decaying sufficiently fast at infinity.)

We introduce functions
$$
f_+ = \ddd^{-1} \tilde f_+ \qquad \text{and} \qquad
f_- = \ddd^{-1} \tilde f_-.
$$

\begin{prop}  \label{f+f-prop}
Let $f: \BB H^{\times} \to \BB S \odot \cdots \odot \BB S$ be a left $n$-regular
function. Then $f(X)=f_+(X)+f_-(X)$, for all $X \in \BB H^{\times}$.
\end{prop}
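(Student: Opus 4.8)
The plan is to feed $f$ into the reproducing formula of Theorem \ref{Fueter-n-reg} and then undo $\ddd$ summand by summand. First observe that, by Lemma \ref{Cauchy-thm}, the integrals defining $\tilde f_+$ and $\tilde f_-$ are independent of the radii $R$ and $r$ (as long as $N(W)<R^2$, resp.\ $r^2<N(W)$), so $\tilde f_+$ extends to a left $n$-regular function on all of $\BB H$ and $\tilde f_-$ to a left $n$-regular function on all of $\BB H^{\times}$; the operators $(\deg+m)^{-1}$ preserve $n$-regularity (apply $\underset{k}{\nabla^+}$ under the integral sign), so $f_+,f_-$ are $n$-regular as well. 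For each fixed $W\in\BB H^{\times}$ we may pick $r<N(W)^{1/2}<R$, and then Theorem \ref{Fueter-n-reg} applied to the annulus gives $\ddd f(W)=\tilde f_+(W)+\tilde f_-(W)$. Since $\ddd\,\ddd^{-1}=\mathrm{id}$ on each of $\tilde f_+$ and $\tilde f_-$, we obtain $\ddd(f_++f_-)=\tilde f_++\tilde f_-=\ddd f$, so that $h:=f-f_+-f_-$ is left $n$-regular on $\BB H^{\times}$ and satisfies $\ddd h=0$.

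Next I would track homogeneity. Expanding the kernel by Proposition \ref{expansion-prop} and integrating term by term shows that $\tilde f_+$ is a series in the functions $F^{(n)}_{l,\mu,\nu}$, which are homogeneous of degree $\ge 0$, while $\tilde f_-$ — whose decay rate is $N(W)^{-1-n/2}$ — is a series of homogeneous functions of degree $\le -(n+2)$. On a homogeneous component of degree $d$ the operator $\ddd$ acts as the scalar $\kappa_d=(d+2)(d+3)\cdots(d+n)$, and the two integral formulas for $\ddd^{-1}$ converge precisely in the relevant ranges: the $\int_0^1$ formula when $d\ge 0$ (so every factor $d+m\ge 2$) and the $\int_1^{\infty}$ formula when $d\le-(n+2)$ (so every factor $d+m\le-2$); in both cases $\ddd^{-1}$ acts as $\kappa_d^{-1}$. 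Hence $f_+$ reproduces exactly the degree $\ge 0$ part of $f$ and $f_-$ the degree $\le-(n+2)$ part, so that $h=\sum_{d=-(n+1)}^{-1}f_d$ is a finite sum of homogeneous left $n$-regular components. From $\ddd h=0$ we get $\kappa_d f_d=0$ for each of these $d$, and since $\kappa_{-1}=(n-1)!\ne 0$ and $\kappa_{-(n+1)}=(-1)^{n-1}(n-1)!\ne 0$, this already forces $f_{-1}=f_{-(n+1)}=0$.

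What remains, and what I expect to be the main obstacle, is the degrees $d=-2,\dots,-n$, where $\kappa_d=0$ and the reproducing formula is therefore blind. I would prove the sharper statement that there is no nonzero homogeneous left $n$-regular function on $\BB H^{\times}$ of degree $-m$ with $2\le m\le n$. Any homogeneous harmonic $\BB S\odot\cdots\odot\BB S$-valued function of degree $-m$ has the form $f(Z)=\sum_{\mu,\nu}N(Z)^{-1}t^{L}_{\nu\,\underline{\mu}}(Z^{-1})\otimes v_{\mu\nu}$ with $2L=m-2$ and constant vectors $v_{\mu\nu}\in\BB S\odot\cdots\odot\BB S$; imposing the $n$ equations $\underset{k}{\nabla^+}f=0$ and disposing of the cross terms by Lemma \ref{antisymmetric} yields a finite system of linear relations on the $v_{\mu\nu}$. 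The model case $n=2$, $L=0$ already exhibits the mechanism: there $f=N(Z)^{-1}v$ and $\underset{1}{\nabla^+}f=\bigl((-2Z/N(Z)^2)\otimes 1\bigr)v$, whose vanishing forces $(e_0\otimes1)v=v=0$. I expect the general case to reduce, via the $SU(2)\times SU(2)$ isotypic decomposition of this space of harmonics, to verifying that the Dirac-type operators $\underset{k}{\nabla^+}$ have no common kernel on the relevant finite-dimensional piece; this representation-theoretic injectivity is the hard part. Once it is in hand, all the components $f_{-2},\dots,f_{-n}$ vanish, whence $h=0$ and $f=f_++f_-$ on $\BB H^{\times}$, as claimed.
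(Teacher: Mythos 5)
Your reduction is sound and matches the paper's: both arguments set $h=f-f_+-f_-$, observe $\ddd h=0$, and conclude that $h$ is a finite sum of homogeneous left $n$-regular functions whose degrees lie in the ``blind spot'' $-2,\dots,-n$ where $\ddd$ acts by zero. (Your detour through the degrees $-1$ and $-(n+1)$ and the scalars $\kappa_d$ is harmless, and your bookkeeping of which integral formula for $(\deg+m)^{-1}$ converges on which piece is correct.) But at the decisive step you stop: the assertion that there is no nonzero homogeneous left $n$-regular function on $\BB H^{\times}$ of degree $-m$ for $2\le m\le n$ is exactly what still has to be proved, and you explicitly defer it to an unverified ``representation-theoretic injectivity'' of the operators $\underset{k}{\nabla^+}$ on a finite-dimensional space of harmonics, checking only the single case $n=2$, $L=0$. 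As written, the proof is incomplete: the model computation does not indicate how the linear system on the vectors $v_{\mu\nu}$ is to be solved for general $n$ and $m$, and nothing in your argument rules out a common kernel for intermediate degrees.

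The paper closes this gap without any such computation, by a trick you may want to note: since $\partial/\partial x^0$ (and the other partials) sends a homogeneous left $n$-regular function of degree $-j$ to one of degree $-(j+1)$, you can differentiate each component $f_{-j}$ enough times to push its degree down to $-(n+1)$, which lies \emph{outside} the blind spot. There $\ddd$ acts by the nonzero scalar $(-1)^{n-1}(n-1)!$, so the Cauchy--Fueter formula (Theorem \ref{Fueter-n-reg}) over an annulus represents the derivative as a difference of two integrals: one extends to a left $n$-regular function on all of $\BB H$ (hence is constant or unbounded by Corollary \ref{Liouville}), the other decays at infinity like $N(W)^{-1-n/2}$. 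Comparing with the homogeneity of degree $-(n+1)$ forces the derivative to vanish identically; since a function of strictly negative homogeneity all of whose iterated partials of some fixed order vanish must itself be zero, each $f_{-j}$ vanishes. If you want to keep your more direct computational route, you must actually carry out the kernel analysis for all $2\le m\le n$ (equivalently, prove the statement ${\cal F}_n^-(d)=\{0\}$ for $-(n+2)<d<0$ that appears later in Proposition \ref{K-types-prop}); until then the proof has a genuine hole.
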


\begin{proof}
The proof is the same as that of Proposition 13 in \cite{FL4}.
Let $f_* = f-f_+-f_-$, we want to show that $f_* \equiv 0$.
Note that $f_*: \BB H^{\times} \to \BB S \odot \cdots \odot \BB S$ is a left
$n$-regular function such that $\ddd f_* \equiv 0$, hence $f_*$ is a sum
of homogeneous functions of degrees $-2,-3,\dots,-n$:
$$
f_*=f_{-2}+f_{-3}+\cdots+f_{-n}, \qquad (\deg+j)f_{-j}=0.
$$
Since the operators $\underset{k}{\nabla^+}$, $k=1,\dots,n$ lower the degree
of homogeneity by one, each
$f_{-j}: \BB H^{\times} \to \BB S \odot \cdots \odot \BB S$ is left $n$-regular.
We will show that each $f_{-j}$, $j=2,3,\dots,n$, is identically zero.

Let
$$
f_{-n-1} = \frac{\partial}{\partial x^0} f_{-n}, \qquad
f_{-n-1}: \BB H^{\times} \to \BB S \odot \cdots \odot \BB S,
$$
then $f_{-n-1}$ is a left $n$-regular function that is homogeneous
of degree $-(n+1)$.

By the Cauchy-Fueter formulas for $n$-regular functions
(Theorem \ref{Fueter-n-reg}),
\begin{multline*}
(-1)^{n-1} (n-1)! f_{-n-1}(X) =
\frac1{2\pi^2} \int_{S^3_R} k_{n/2}(Z-X) \cdot
(Dz \otimes Z \otimes \cdots \otimes Z) \cdot f_{-n-1}(Z)  \\
- \frac1{2\pi^2} \int_{S^3_r} k_{n/2}(Z-X) \cdot
(Dz \otimes Z \otimes \cdots \otimes Z) \cdot f_{-n-1}(Z),
\end{multline*}
where $R,r>0$ are such that $r^2<N(X)<R^2$.
By Liouville's theorem (Corollary \ref{Liouville}), the first integral defines
a left $n$-regular function on $\BB H$ that is either constant or unbounded.
On the other hand, the second integral defines a left $n$-regular function
on $\BB H^{\times}$ that decays at infinity at a rate $\sim N(W)^{-1-n/2}$.
We conclude that $f_{-n-1} \equiv 0$, hence $f_{-n} \equiv 0$ as well.

A similar argument combined with induction shows that
$f_{-n+1}, \dots, f_{-2} \equiv 0$. Hence $f_* \equiv 0$.
\end{proof}

\begin{df}  \label{deg-inv-def}
Let $f: \BB H^{\times} \to \BB S \odot \cdots \odot \BB S$ be a left $n$-regular
function. We define
$$
(\deg+m)^{-1}f = (\deg+m)^{-1}f_+ + (\deg+m)^{-1}f_-,
\qquad m=1,2,\dots,n,
$$
then
$$
\ddd^{-1} f = \ddd^{-1} f_+ + \ddd^{-1} f_-.
$$
Similarly, we can define $(\deg+m)^{-1}g$ and $\ddd^{-1}g$ for right
$n$-regular functions $g: \BB H^{\times} \to \BB S' \odot \cdots \odot \BB S'$.
\end{df}

From the previous discussion we immediately obtain:

\begin{prop}  \label{ddd-inverse-prop}
Let $f: \BB H^{\times} \to \BB S \odot \cdots \odot \BB S$ be a left $n$-regular
function and $g: \BB H^{\times} \to \BB S' \odot \cdots \odot \BB S'$ a right
$n$-regular function. Then, for $m=1,2,\dots,n$,
\begin{align*}
(\deg+m) \bigl( (\deg+m)^{-1} f \bigr) = (\deg+m)^{-1} \bigl((\deg+m) f \bigr)
&= f, \\
(\deg+m) \bigl( (\deg+m)^{-1} g \bigr) = (\deg+m)^{-1} \bigl( (\deg+m) g \bigr)
&= g, \\
\ddd \circ \ddd^{-1} f = \ddd^{-1} \circ \ddd f &= f, \\
\ddd \circ \ddd^{-1} g = \ddd^{-1} \circ \ddd g &= g.
\end{align*}
\end{prop}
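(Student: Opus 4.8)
The plan is to reduce the four identities to the two elementary inverse relations for $(\deg+m)^{-1}$ recorded just before Definition \ref{deg-inv-def} --- one valid for functions defined on star-shaped open subsets of $\BB H$ centered at the origin, the other for functions on $\BB H^{\times}$ decaying sufficiently fast at infinity --- and then to transport these through the canonical splitting $f = f_+ + f_-$ of Proposition \ref{f+f-prop}. Recall that Definition \ref{deg-inv-def} sets $(\deg+m)^{-1}f = (\deg+m)^{-1}f_+ + (\deg+m)^{-1}f_-$, where $f_+ = \ddd^{-1}\tilde f_+$ is left $n$-regular on all of $\BB H$ (a star-shaped domain centered at the origin) and $f_- = \ddd^{-1}\tilde f_-$ is left $n$-regular on $\BB H^{\times}$ and decays at infinity. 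Hence the two elementary relations apply to $f_+$ and $f_-$ separately. The identity $(\deg+m)\bigl((\deg+m)^{-1}f\bigr)=f$ is then immediate: applying $(\deg+m)$ to the defining formula and using linearity gives $(\deg+m)(\deg+m)^{-1}f_+ + (\deg+m)(\deg+m)^{-1}f_- = f_+ + f_-$, which equals $f$ by Proposition \ref{f+f-prop}.

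The identity $(\deg+m)^{-1}\bigl((\deg+m)f\bigr)=f$ is where I expect the only genuine point to lie, namely that the splitting of Proposition \ref{f+f-prop} is compatible with $(\deg+m)$, i.e. that the canonical decomposition of $(\deg+m)f$ is exactly $(\deg+m)f_+ + (\deg+m)f_-$. First, $(\deg+m)f$ is again left $n$-regular, since $\underset{k}{\nabla^+}(\deg+m) = (\deg+m+1)\underset{k}{\nabla^+}$ forces $\underset{k}{\nabla^+}\bigl((\deg+m)f\bigr)=0$. Next, $(\deg+m)f_+$ is left $n$-regular on all of $\BB H$ while $(\deg+m)f_-$ is left $n$-regular on $\BB H^{\times}$ with unchanged decay at infinity (the degree operator preserves homogeneity degrees and hence the decay rate), so $(\deg+m)f = (\deg+m)f_+ + (\deg+m)f_-$ realizes $(\deg+m)f$ as the sum of a function regular on $\BB H$ and a function on $\BB H^{\times}$ decaying at infinity. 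Such a decomposition is unique: the difference of two of them would be left $n$-regular on all of $\BB H$ and bounded, hence constant by Liouville's theorem (Corollary \ref{Liouville}), and therefore identically zero since it vanishes at infinity. Applying $(\deg+m)^{-1}$ summand by summand and invoking the elementary relations then yields $(\deg+m)^{-1}\bigl((\deg+m)f\bigr) = f_+ + f_- = f$.

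Finally, the two identities for $\ddd = (\deg+n)\cdots(\deg+2)$ follow by composing the $(\deg+m)$ identities for $m=2,\dots,n$. All the operators $(\deg+m)$ are polynomials in $\deg$ and therefore commute, and a short computation with the integral formula gives $\underset{k}{\nabla^+}(\deg+m)^{-1} = (\deg+m+1)^{-1}\underset{k}{\nabla^+}$, so each $(\deg+m)^{-1}$ preserves left $n$-regularity on $\BB H^{\times}$; consequently $\ddd \circ \ddd^{-1}$ and $\ddd^{-1}\circ\ddd$ telescope to the identity. The right $n$-regular statements for $g$ are proved in exactly the same way, replacing the left actions and the operators $\underset{k}{\nabla^+}$ by their right-hand counterparts $\overleftarrow{\underset{k}{\nabla^+}}$.
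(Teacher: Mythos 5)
Your proof is correct and takes essentially the same approach the paper intends: the paper gives no written argument at all, saying only that the proposition follows "immediately" from the preceding discussion, i.e.\ from the splitting $f=f_++f_-$ of Proposition \ref{f+f-prop}, the elementary inverse relations for $(\deg+m)^{-1}$ on star-shaped domains and on functions decaying at infinity, and Definition \ref{deg-inv-def}. Your elaboration — in particular the Liouville-based uniqueness of the decomposition, which is what makes the splitting commute with $(\deg+m)$ and lets $\ddd\circ\ddd^{-1}$ telescope — correctly supplies the details the paper leaves implicit.
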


From the expansions of the Cauchy-Fueter kernel (\ref{k_n-expansion})
we immediately obtain an analogue of Laurent series
expansion for $n$-regular functions.

\begin{cor}  \label{Laurent-expansion}
Let $f: \BB H^{\times} \to \BB S \odot \cdots \odot \BB S$ be a left $n$-regular
function, write $f=f_++f_-$ as in Proposition \ref{f+f-prop}.
Then the functions $f_+$ and $f_-$ can be expanded as series
$$
f_+(X) = \sum_l \Bigl( \sum_{\mu,\nu} a_{l,\mu,\nu} F^{(n)}_{l,\mu,\nu}(X) \Bigr),
\qquad
f_-(X) = \sum_l \Bigl( \sum_{\mu,\nu} b_{l,\mu,\nu} F'^{(n)}_{l,\mu,\nu}(X) \Bigr).
$$

If $g: \BB H^{\times} \to \BB S' \odot \cdots \odot \BB S'$ is a right $n$-regular
function, then it can be expressed as $g=g_+ + g_-$ in a similar way,
and the functions $g_+$ and $g_-$ can be expanded as series
$$
g_+(X) = \sum_l \Bigl( \sum_{\mu,\nu} c_{l,\mu,\nu} G^{(n)}_{l,\mu,\nu}(X) \Bigr),
\qquad
g_-(X) = \sum_l \Bigl( \sum_{\mu,\nu} d_{l,\mu,\nu} G'^{(n)}_{l,\mu,\nu}(X) \Bigr).
$$
\end{cor}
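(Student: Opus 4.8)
The plan is to feed the kernel expansions of Proposition~\ref{expansion-prop} into the integral representations of $f_+$ and $f_-$ that already appear just before Proposition~\ref{f+f-prop}. Recall that $f_+ = \ddd^{-1}\tilde f_+$ and $f_- = \ddd^{-1}\tilde f_-$, where
\[
\tilde f_+(W) = \frac1{2\pi^2}\int_{S^3_R} k_{n/2}(Z-W)\cdot(Dz\otimes Z\otimes\cdots\otimes Z)\cdot f(Z),\qquad N(W)<R^2,
\]
\[
\tilde f_-(W) = -\frac1{2\pi^2}\int_{S^3_r} k_{n/2}(Z-W)\cdot(Dz\otimes Z\otimes\cdots\otimes Z)\cdot f(Z),\qquad N(W)>r^2.
\]
First I would treat $\tilde f_+$. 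On $S^3_R$ we have $N(W)<N(Z)$, so $(Z,W)$ lies in the convergence region of (\ref{k_n-expansion}), and I would substitute the first expansion $k_{n/2}(Z-W)=\sum_{l,\mu,\nu}F^{(n)}_{l,\mu,\nu}(W)\cdot G'^{(n)}_{l,\mu,\nu}(Z)$. Since $F^{(n)}_{l,\mu,\nu}(W)$ is a column and $G'^{(n)}_{l,\mu,\nu}(Z)$ a row, each term factors as $F^{(n)}_{l,\mu,\nu}(W)$ times the scalar $G'^{(n)}_{l,\mu,\nu}(Z)\cdot(Dz\otimes Z\otimes\cdots\otimes Z)\cdot f(Z)$; the uniform convergence on compact subsets asserted in Proposition~\ref{expansion-prop} then justifies integrating term by term, yielding $\tilde f_+(W)=\sum_{l,\mu,\nu}\tilde a_{l,\mu,\nu}\,F^{(n)}_{l,\mu,\nu}(W)$ with scalar coefficients $\tilde a_{l,\mu,\nu}$.

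The delicate case is $\tilde f_-$, and this is the step I expect to be the main obstacle: on $S^3_r$ we have $N(Z)<N(W)$, which lies \emph{outside} the convergence region of (\ref{k_n-expansion}) as stated, so neither expansion applies directly. The repair is the symmetry (\ref{k_n-symmetry}): writing $k_{n/2}(Z-W)=(-1)^n k_{n/2}(W-Z)$ and applying the second expansion of (\ref{k_n-expansion}) to $k_{n/2}(W-Z)$ (that is, with the roles of $Z$ and $W$ interchanged) gives
\[
k_{n/2}(Z-W)=(-1)^n\sum_{l,\mu,\nu}F'^{(n)}_{l,\mu,\nu}(W)\cdot G^{(n)}_{l,\mu,\nu}(Z),
\]
which now converges precisely when $N(Z)<N(W)$. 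Substituting this and integrating term by term exactly as before produces $\tilde f_-(W)=\sum_{l,\mu,\nu}\tilde b_{l,\mu,\nu}\,F'^{(n)}_{l,\mu,\nu}(W)$ with scalar coefficients $\tilde b_{l,\mu,\nu}$.

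Finally I would apply $\ddd^{-1}$. Each $F^{(n)}_{l,\mu,\nu}$ and $F'^{(n)}_{l,\mu,\nu}$ is homogeneous (obtained by differentiating the homogeneous functions $t^l$), of degrees $2l\ge 0$ and $-2l-2-n\le -n-2$ respectively, so $\ddd=(\deg+n)\cdots(\deg+2)$ acts on each as multiplication by a nonzero scalar and, by Proposition~\ref{ddd-inverse-prop}, $\ddd^{-1}$ returns the same function rescaled. Absorbing these scalars into the coefficients gives $f_+(X)=\sum_{l,\mu,\nu} a_{l,\mu,\nu}\,F^{(n)}_{l,\mu,\nu}(X)$ and $f_-(X)=\sum_{l,\mu,\nu} b_{l,\mu,\nu}\,F'^{(n)}_{l,\mu,\nu}(X)$, as claimed. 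The right-regular statement is identical after swapping the two expansions of (\ref{k_n-expansion}): I would use $k_{n/2}(Z-W)=\sum F'^{(n)}_{l,\mu,\nu}(Z)\cdot G^{(n)}_{l,\mu,\nu}(W)$ on $S^3_R$ to expand $\tilde g_+$ in the $G^{(n)}_{l,\mu,\nu}$, and, via (\ref{k_n-symmetry}), the rewriting $k_{n/2}(Z-W)=(-1)^n\sum F^{(n)}_{l,\mu,\nu}(Z)\cdot G'^{(n)}_{l,\mu,\nu}(W)$ on $S^3_r$ to expand $\tilde g_-$ in the $G'^{(n)}_{l,\mu,\nu}$. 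Apart from the symmetry trick that rescues the $f_-$ and $g_-$ cases, every step is routine given Propositions~\ref{expansion-prop} and~\ref{ddd-inverse-prop}.
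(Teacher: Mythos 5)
Your argument is correct and is essentially the proof the paper intends: the corollary is stated as an immediate consequence of the kernel expansions (\ref{k_n-expansion}) substituted into the integral formulas defining $\tilde f_{\pm}$, followed by term-by-term integration and the observation that $\ddd^{-1}$ acts on each homogeneous basis function by a nonzero scalar. Your use of the symmetry (\ref{k_n-symmetry}) to transport the expansion into the region $N(Z)<N(W)$ needed for $\tilde f_-$ and $\tilde g_-$ is exactly the right (and implicitly intended) way to handle that case, and your choice of which of the two expansions to use in each integral is the one that makes the $W$-dependence factor out correctly.
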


Formulas expressing the coefficients $a_{l,\mu,\nu}$, $b_{l,\mu,\nu}$,
$c_{l,\mu,\nu}$ and $d_{l,\mu,\nu}$ will be given in Corollary \ref{Laurent-coeff}.

\section{Invariant Bilinear Pairing for $n$-Regular Functions}  \label{bilinear-pairing-section}

We define a pairing between left and right $n$-regular functions as follows.
If $f(Z)$ and $g(Z)$ are left and right $n$-regular functions on
$\BB H^{\times}$ respectively, then, by the results of the previous section,
$\ddd^{-1}f$ and $\ddd^{-1}g$ are well defined, and we set
\begin{equation}   \label{pairing1}
\langle f, g \rangle_{{\cal R}_n} = \frac1{2\pi^2}
\int_{Z \in S^3_R} g(Z) \cdot (Dz \otimes Z \otimes \cdots \otimes Z) \cdot
\bigl( \ddd^{-1} f \bigr)(Z),
\end{equation}
where $S^3_R \subset \BB H$ is the sphere of radius $R$
centered at the origin
$$
S^3_R = \{ X \in \BB H ;\: N(X)=R^2 \}.
$$
Note that we use the subscript ${\cal R}_n$ in $\langle f, g \rangle_{{\cal R}_n}$
to indicate that the pairing is between left and right $n$-regular functions;
however, this is not a pairing between the spaces ${\cal R}_n$ and
${\cal R}'_n$, since the functions $f$ and $g$ are not allowed to have
singularities away from the origin.
Recall that by Lemma 6 in \cite{FL1} the $3$-form $Dz$ restricted to $S^3_R$
becomes $Z\,dS/R$, where $dS$ is the usual Euclidean volume element on $S^3_R$.
Thus we can rewrite (\ref{pairing1}) as
\begin{align*}
\langle f, g \rangle_{{\cal R}_n} &= \frac1{2\pi^2} \int_{Z \in S^3_R} g(Z) \cdot
(Z \otimes \cdots \otimes Z) \cdot \bigl( \ddd^{-1} f \bigr)(Z) \,\frac{dS}R\\
&= \frac1{2\pi^2} \int_{Z \in S^3_R} g(Z) \cdot (Z \otimes \cdots
\underset{\text{$k$-th place}}{\otimes Dz \otimes} \cdots \otimes Z)
\cdot \bigl( \ddd^{-1} f \bigr)(Z), \qquad k=1,\dots,n.
\end{align*}
Since $\underset{k}{\nabla^+} \ddd^{-1} f =0$ and, by Lemma \ref{zf-regular},
$$
\Bigl[ g(Z) (Z \otimes \cdots \underset{\text{$k$-th place}}{\otimes 1 \otimes}
\cdots \otimes Z) \Bigr] \overleftarrow{\underset{k}{\nabla^+}} =0,
$$
the integrand of (\ref{pairing1}) is a closed $3$-form. Thus,
by Lemma \ref{Cauchy-thm}, the contour of integration can be continuously
deformed. In particular, this pairing does not depend on the choice of $R>0$.

\begin{prop}
If $f(Z)$ and $g(Z)$ are left and right $n$-regular functions respectively
on $\BB H^{\times}$, then
\begin{multline}  \label{deg-switch}
\langle f, g \rangle_{{\cal R}_n} = \frac1{2\pi^2} \int_{Z \in S^3_R} g(Z) \cdot
(Dz \otimes Z \otimes \cdots \otimes Z) \cdot \bigl( \ddd^{-1} f \bigr)(Z) \\
= \frac{(-1)^{n-1}}{2\pi^2} \int_{Z \in S^3_R} \bigl( \ddd^{-1} g \bigr)(Z)
\cdot (Dz \otimes Z \otimes \cdots \otimes Z) \cdot f(Z).
\end{multline}
\end{prop}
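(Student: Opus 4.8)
The plan is to reduce the claimed identity to a purely algebraic statement about homogeneous components, exploiting that $\ddd$ acts by a scalar on each such component. First I would decompose $f$ and $g$ via the Laurent-type expansion of Corollary~\ref{Laurent-expansion}: write $f=f_++f_-$, expand $f_+$ in the functions $F^{(n)}_{l,\mu,\nu}$ (each homogeneous of degree $2l\ge 0$) and $f_-$ in the $F'^{(n)}_{l,\mu,\nu}$ (each homogeneous of degree $-2l-n-2\le-(n+2)$), and similarly expand $g$ in the $G^{(n)}_{l,\mu,\nu}$ and $G'^{(n)}_{l,\mu,\nu}$. Since these expansions converge uniformly on the fixed sphere $S^3_R$, it is enough to verify (\ref{deg-switch}) for a single homogeneous left $n$-regular $f$ of degree $a$ paired against a single homogeneous right $n$-regular $g$ of degree $b$, and then sum.

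Next I would record two facts about such homogeneous pieces. First, rewriting $Dz|_{S^3_R}=Z\,dS/R$ turns the integrand $g\cdot(Dz\otimes Z\otimes\cdots\otimes Z)\cdot f$ into a quantity that scales like $R^{a+b+n+2}$ under $Z\mapsto RZ$; since this integrand is a closed $3$-form (by Lemma~\ref{zf-regular}, exactly as in the discussion preceding the Proposition, the homogeneous components being again $n$-regular), its integral over $S^3_R$ is independent of $R$ and hence vanishes unless $a+b=-(n+2)$. Thus only the ``matched'' pairs contribute. Second, on a homogeneous function of degree $d$ the operator $\ddd=(\deg+n)\cdots(\deg+2)$ acts as the scalar $\prod_{m=2}^{n}(d+m)$; the key point is that, by Proposition~\ref{f+f-prop} (whose proof shows an $n$-regular function on $\BB H^{\times}$ has no homogeneous components of degrees $-2,\dots,-n$), the degrees $a,b$ actually occurring avoid $\{-2,\dots,-n\}$, so this scalar is nonzero and $\ddd^{-1}$ acts by its reciprocal, in accordance with Proposition~\ref{ddd-inverse-prop}.

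On a matched pair $a+b=-(n+2)$, both sides of (\ref{deg-switch}) are then a scalar multiple of the common integral $I=\frac1{2\pi^2}\int_{S^3_R} g\cdot(Dz\otimes Z\otimes\cdots\otimes Z)\cdot f$: the left-hand side equals $\bigl(\prod_{m=2}^{n}(a+m)\bigr)^{-1}I$ and the right-hand side equals $(-1)^{n-1}\bigl(\prod_{m=2}^{n}(b+m)\bigr)^{-1}I$. Substituting $b=-(n+2)-a$ gives $b+m=-(a+(n+2-m))$, and the reflection $m\mapsto n+2-m$ is an involution of $\{2,\dots,n\}$, so $\prod_{m=2}^{n}(b+m)=(-1)^{n-1}\prod_{m=2}^{n}(a+m)$. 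Hence the two scalars coincide, the identity holds on each matched pair, and summing over the expansions gives (\ref{deg-switch}) in general.

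The genuinely delicate part is the homogeneity bookkeeping: confirming that the basis functions carry exactly the degrees claimed, that a nonvanishing pairing forces the balance $a+b=-(n+2)$, and, crucially, invoking Proposition~\ref{f+f-prop} to rule out the degrees in $\{-2,\dots,-n\}$ on which $\ddd$ is not invertible. Once this is set up, the sign $(-1)^{n-1}$ appears automatically as the product of the $n-1$ sign changes $b+m=-(a+(n+2-m))$ in the factors of $\ddd$, and the rest (uniform convergence justifying termwise integration, and $R$-independence of each matched integral) is routine.
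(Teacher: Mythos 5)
Your argument is correct, but it follows a genuinely different route from the paper's. The paper never decomposes into homogeneous components: writing $\omega = Dz\otimes Z\otimes\cdots\otimes Z$ for brevity, it differentiates the $R$-independence of $\int_{S^3_{tR}} g\cdot\omega\cdot f$ in the scaling parameter $t$ at $t=1$ to obtain the integration-by-parts identity
\begin{equation*}
\int_{S^3_R}\bigl((\deg+m)g\bigr)\cdot\omega\cdot f
\;=\; -\int_{S^3_R} g\cdot\omega\cdot\bigl((\deg+n+2-m)f\bigr),
\qquad m\in\BB Z,
\end{equation*}
and then iterates it for $m=2,\dots,n$ to transfer the $n-1$ factors of $\ddd^{-1}$ from $f$ to $g$ one at a time, each transfer contributing one sign, whence $(-1)^{n-1}$. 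You instead diagonalize first via Corollary \ref{Laurent-expansion} and reduce to the scalar identity $\prod_{m=2}^{n}(b+m)=(-1)^{n-1}\prod_{m=2}^{n}(a+m)$ for $a+b=-(n+2)$, which is the same sign seen through the eigenvalues of $\ddd$ on homogeneous components; both proofs ultimately rest on the same $R$-independence coming from Lemma \ref{Cauchy-thm}. The paper's version buys brevity and avoids any discussion of convergence or termwise integration of the Laurent series; yours buys explicitness — it correctly flags the need to exclude the degrees $-2,\dots,-n$ on which $\ddd$ fails to be invertible (via Proposition \ref{f+f-prop}), and it re-derives along the way the degree-matching fact that the paper only establishes later, in the proof of Proposition \ref{orthogonality-nreg-prop} (note your argument does not use that proposition, so there is no circularity). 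The one step you should not wave at is the uniform convergence of the Laurent expansions on the compact sphere $S^3_R$, which is what licenses integrating the double series term by term; with that cited, the proof is complete.
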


\begin{proof}
Since the expression
$$
\int_{Z \in S^3_R} g(Z) \cdot (Dz \otimes Z \otimes \cdots \otimes Z) \cdot f(Z)
$$
is independent of the choice of $R>0$, we have:
\begin{multline*}
0 = \frac{d}{dt} \biggr|_{t=1} \biggl( \int_{Z \in  S^3_{tR}} g(Z)
\cdot (Dz \otimes Z \otimes \cdots \otimes Z) \cdot f(Z) \biggr)  \\
= \int_{Z \in  S^3_R} \Bigl( \bigl( (\deg+m)g \bigr)(Z) \cdot
(Dz \otimes Z \otimes \cdots \otimes Z) \cdot f(Z)  \\
+ g(Z) \cdot (Dz \otimes Z \otimes \cdots \otimes Z) \cdot
\bigl( (\deg+2+n-m)f \bigr)(Z) \Bigr),
\end{multline*}
for all $m \in \BB Z$. From this (\ref{deg-switch}) follows.
\end{proof}

\begin{cor}
If $f(Z)$ and $g(Z)$ are left and right $n$-regular functions on $\HC$
respectively and $W \in \BB D_R^+$ (open domains $\BB D_R^{\pm}$ were defined by
equation (22) in \cite{FL3}), the Cauchy-Fueter formulas for $n$-regular
functions (Theorem \ref{Fueter-n-reg}) can be rewritten as
$$
f(W) = \bigl\langle k_{n/2}(Z-W), f(Z) \bigr\rangle_{{\cal R}_n}
\qquad \text{and} \qquad
g(W) = (-1)^{n-1} \bigl\langle g(Z), k_{n/2}(Z-W) \bigr\rangle_{{\cal R}_n}.
$$
\end{cor}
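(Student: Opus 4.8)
The plan is to rewrite the Cauchy-Fueter formula of Theorem \ref{Fueter-n-reg} in the language of the pairing $\langle\,\cdot\,,\,\cdot\,\rangle_{{\cal R}_n}$ by recognizing the kernel $k_{n/2}(Z-W)$ as the relevant regular argument. First I would take the case $W \in \BB D_R^+$ and apply Theorem \ref{Fueter-n-reg} with $U$ an appropriate region whose boundary is a sphere $S^3_R$ containing $W$ in its interior. That theorem gives
\begin{equation*}
\ddd f(W) = \frac1{2\pi^2} \int_{\partial U} k_{n/2}(Z-W) \cdot
(Dz \otimes Z \otimes \cdots \otimes Z) \cdot f(Z).
\end{equation*}
The key observation is that, as a function of $Z$, the kernel $k_{n/2}(Z-W)$ is left $n$-regular on $\HC^{\times}$ away from $Z=W$; and by the symmetry relation (\ref{k_n-symmetry}) together with the derivation of (\ref{k_n-kernel}), it is \emph{right} $n$-regular in the $Z$ variable as well, so it is a legitimate argument for the pairing.

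Next I would invoke the definition (\ref{pairing1}) of the pairing, applied with the roles assigned so that $k_{n/2}(Z-W)$ plays the part of the left-regular slot and $f(Z)$ the right-regular slot. The central point is that the $\ddd^{-1}$ appearing in the pairing exactly cancels the operator $\ddd$ produced by Theorem \ref{Fueter-n-reg}. Concretely, by Proposition \ref{ddd-inverse-prop} the operators $\ddd$ and $\ddd^{-1}$ are mutually inverse on $n$-regular functions, so $\ddd^{-1}(\ddd f) = f$. Thus
\begin{equation*}
\bigl\langle k_{n/2}(Z-W), f(Z) \bigr\rangle_{{\cal R}_n}
= \frac1{2\pi^2} \int_{S^3_R} f(Z) \cdot (Dz \otimes Z \otimes \cdots \otimes Z)
\cdot \bigl( \ddd^{-1} k_{n/2}(\,\cdot\,-W) \bigr)(Z),
\end{equation*}
and one recasts this, using the invariance of the integrand under contour deformation established just before the proposition, into the Cauchy-Fueter integral evaluated at $W$, whose value is $\ddd f(W)$ followed by the cancellation $\ddd^{-1}\ddd = \mathrm{id}$. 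The care needed here is to track on which argument $\ddd^{-1}$ acts and to use the symmetry of $k_{n/2}$ so that the pairing evaluates $f$ rather than its image under $\ddd$.

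For the right $n$-regular statement I would argue symmetrically, now using the second identity in Theorem \ref{Fueter-n-reg} together with the relation (\ref{deg-switch}) of the preceding proposition, which transfers the operator $\ddd^{-1}$ from one slot of the pairing to the other at the cost of a sign $(-1)^{n-1}$; this is precisely the source of the factor $(-1)^{n-1}$ in the formula for $g(W)$. The main obstacle I anticipate is bookkeeping rather than conceptual: one must verify that $k_{n/2}(Z-W)$ genuinely lies in the domain of the pairing as a function of $Z$ (i.e.\ that it is simultaneously left and right $n$-regular and decays appropriately so that $\ddd^{-1}$ is defined via Definition \ref{deg-inv-def}), and that the decomposition $k_{n/2} = (k_{n/2})_+ + (k_{n/2})_-$ of Proposition \ref{f+f-prop} interacts correctly with the region $\BB D_R^+$. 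Once the regularity and convergence are in place, the identity is essentially the statement that $\ddd$ and $\ddd^{-1}$ cancel, with the sign in the right-regular case accounted for by (\ref{deg-switch}).
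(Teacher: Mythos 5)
There is a genuine gap in your treatment of the first identity, and it concerns exactly the point you flag as ``bookkeeping'': on which argument $\ddd^{-1}$ acts. In your displayed formula you place $\ddd^{-1}$ on the kernel, writing the pairing as $\frac1{2\pi^2}\int_{S^3_R} f(Z)\cdot(Dz\otimes Z\otimes\cdots\otimes Z)\cdot\bigl(\ddd^{-1}k_{n/2}(\cdot-W)\bigr)(Z)$. First, this does not typecheck: $f$ is $\BB S\odot\cdots\odot\BB S$-valued (columns) and must sit to the \emph{right} of the $3$-form for the contraction (\ref{S-pairing}) to produce a scalar-coefficient object; the matrix-valued kernel must sit on the left. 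Second, and more seriously, the ``cancellation $\ddd^{-1}\ddd=\mathrm{id}$'' you invoke pairs a $\ddd^{-1}$ acting on the kernel in the variable $Z$ against the $\ddd$ of Theorem \ref{Fueter-n-reg}, which acts on $f$ at the point $W$; these are operators in different variables applied to different functions and do not cancel. If you carry out your version correctly, transferring $\ddd^{-1}$ from the kernel onto $f$ via (\ref{deg-switch}) costs a factor $(-1)^{n-1}$, so your argument would yield $(-1)^{n-1}f(W)$ rather than $f(W)$ --- wrong for even $n$.

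The correct (and essentially one-line) argument, which is what the paper intends by stating this as an immediate corollary, is: in $\bigl\langle k_{n/2}(Z-W),f(Z)\bigr\rangle_{{\cal R}_n}$ the operator $\ddd^{-1}$ falls on the left $n$-regular function $f$, so the pairing is $\frac1{2\pi^2}\int_{S^3_R}k_{n/2}(Z-W)\cdot(Dz\otimes Z\otimes\cdots\otimes Z)\cdot(\ddd^{-1}f)(Z)$. Since $(\deg+m)^{-1}$ commutes with each $\underset{k}{\nabla^+}$, the function $\ddd^{-1}f$ is again left $n$-regular, and Theorem \ref{Fueter-n-reg} applied to it gives $\ddd(\ddd^{-1}f)(W)=f(W)$ with no sign. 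For the second identity $\ddd^{-1}$ necessarily lands on the kernel (as the left-regular argument), and it is only there that (\ref{deg-switch}) is needed to move it onto $g$ at the cost of $(-1)^{n-1}$ --- which, as you correctly observe, is the source of the sign in the formula for $g(W)$. Your remaining concerns (regularity of $k_{n/2}(Z-W)$ in $Z$, definedness of $\ddd^{-1}$) are legitimate but minor once $\ddd^{-1}$ is placed on the correct argument: for $f$ on $\HC$ one uses the star-shaped formula $\int_0^1 t^{m-1}\phi(tZ)\,dt$, and for the kernel one only needs the decaying branch on $N(Z)>N(W)$.
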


We can rewrite the bilinear pairing (\ref{pairing1})
in a more symmetrical way.
Let $0<r<R$ and $0<r_1<R<r_2$.  Using the Cauchy-Fueter formulas for
$n$-regular functions (Theorem \ref{Fueter-n-reg}), substituting
\begin{multline*}
g(Z) = \frac1{2\pi^2} \int_{W \in S^3_{r_2}} \bigl( \ddd^{-1} g \bigr)(W) \cdot
(Dw \otimes W \otimes \cdots \otimes W) \cdot k_{n/2}(W-Z) \\
- \frac1{2\pi^2} \int_{W \in S^3_{r_1}} \bigl( \ddd^{-1} g \bigr)(W) \cdot
(Dw \otimes W \otimes \cdots \otimes W) \cdot k_{n/2}(W-Z),
\qquad Z \in \BB D^+_{r_2} \cap \BB D^-_{r_1},
\end{multline*}
into (\ref{pairing1}) and shifting contours of integration, we obtain:
\begin{multline}  \label{pairing1-symm}
4\pi^4 \cdot \langle f, g \rangle_{{\cal R}_n} =  \\
\iint_{\genfrac{}{}{0pt}{}{Z \in S^3_r}{W \in S^3_R}} \bigl( \ddd^{-1} g \bigr)(W)
\cdot (Dw \otimes W \otimes \cdots \otimes W) \cdot k_{n/2}(W-Z) \cdot
(Dz \otimes Z \otimes \cdots \otimes Z) \cdot \bigl( \ddd^{-1} f \bigr)(Z)  \\
- \iint_{\genfrac{}{}{0pt}{}{Z \in S^3_R}{W \in S^3_r}}
\bigl( \ddd^{-1} g \bigr)(W) \cdot (Dw \otimes W \otimes \cdots \otimes W)
\cdot k_{n/2}(W-Z) \cdot (Dz \otimes Z \otimes \cdots \otimes Z) \cdot
\bigl( \ddd^{-1} f \bigr)(Z).
\end{multline}

As usual, we realize $U(2) \times U(2)$ as a diagonal subgroup of $GL(2,\HC)$:
\begin{equation}  \label{U(2)xU(2)}
U(2) \times U(2) = \bigl\{
\bigl(\begin{smallmatrix} a & 0 \\ 0 & d \end{smallmatrix}\bigr)
\in GL(2,\HC); \: a,d \in \HC,\: a^*a=1,\: d^*d=1 \bigr\},
\end{equation}
where $a^*$ and $d^*$ denote the matrix adjoints of $a$ and $d$ in the
standard realization of $\HC$ as $2 \times 2$ complex matrices.
Then $SU(2) \times SU(2)$ is realized as
\begin{equation}  \label{SU(2)xSU(2)}
SU(2) \times SU(2) = \bigl\{
\bigl(\begin{smallmatrix} a & 0 \\ 0 & d \end{smallmatrix}\bigr)
\in GL(2,\HC); \: a,d \in \BB H,\: N(a)=N(d)=1 \bigr\}.
\end{equation}

\begin{prop}  \label{invariant}
The bilinear pairing (\ref{pairing1}) is $SU(2) \times SU(2)$ and
$\mathfrak{gl}(2,\HC)$-invariant.
\end{prop}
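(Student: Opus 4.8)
The plan is to prove the two invariances separately, because the group statement and the Lie-algebra statement have different characters: the subgroup $SU(2)\times SU(2)$ preserves $\BB H^\times$ and each sphere $S^3_R$, so it can be handled by a direct change of variables, whereas a general element of $GL(2,\HC)$ moves the singularity at the origin, so the remaining directions must be treated infinitesimally.

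For $SU(2)\times SU(2)$, write $h=\bigl(\begin{smallmatrix}a&0\\0&d\end{smallmatrix}\bigr)$ with $a,d\in\BB H$, $N(a)=N(d)=1$. From (\ref{pi_nl}) and (\ref{pi_nr}) one finds $(\pi_{nl}(h)f)(Z)=(d\otimes\cdots\otimes d)f(a^{-1}Zd)$ and $(\pi_{nr}(h)g)(Z)=g(a^{-1}Zd)(a^{-1}\otimes\cdots\otimes a^{-1})$, both built from the same substitution $W=a^{-1}Zd$. Since $N(W)=N(Z)$, this substitution is an isometry of $(\BB H,N)$ preserving each $S^3_R$ and the volume element $dS$, and it preserves the degree of homogeneity; hence $\ddd$, and therefore $\ddd^{-1}$ (which exists by Proposition \ref{ddd-inverse-prop}), commute with $\pi_{nl}(h)$. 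First I would move $\ddd^{-1}$ past $\pi_{nl}(h)$ in $\langle\pi_{nl}(h)f,\pi_{nr}(h)g\rangle$, then restrict $Dz$ to the sphere via $Dz=Z\,dS/R$ (Lemma 6 of \cite{FL1}), so that the integrand becomes $g(a^{-1}Zd)\,(a^{-1}\otimes\cdots\otimes a^{-1})\,(Z\otimes\cdots\otimes Z)\,(d\otimes\cdots\otimes d)\,(\ddd^{-1}f)(a^{-1}Zd)\,dS/R$. Using $(A\otimes B)(C\otimes D)=(AC)\otimes(BD)$ the middle matrix collapses to $(a^{-1}Zd)^{\otimes n}=W^{\otimes n}$, and the change of variables $Z\mapsto W$ returns exactly $\langle f,g\rangle$.

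For $\mathfrak{gl}(2,\HC)$ it suffices to verify, for each generator $X$, the infinitesimal identity $\langle\pi_{nl}(X)f,g\rangle+\langle f,\pi_{nr}(X)g\rangle=0$, reading off $\pi_{nl}(X)$ and $\pi_{nr}(X)$ from Lemma \ref{Lie-alg-action}. The diagonal blocks $\bigl(\begin{smallmatrix}A&0\\0&0\end{smallmatrix}\bigr)$ and $\bigl(\begin{smallmatrix}0&0\\0&D\end{smallmatrix}\bigr)$ act through the degree-preserving operators $\tr(AZ\partial)$ and $\tr(ZD\partial)$ (plus constant and tensor terms), so they commute with $\ddd$ and $\ddd^{-1}$; their invariance follows by combining the already-proven $SU(2)\times SU(2)$ case with the skew-adjointness of $\deg$ that is exactly the identity established in the proof of (\ref{deg-switch}). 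For the translation block $\bigl(\begin{smallmatrix}0&B\\0&0\end{smallmatrix}\bigr)$, where $\pi_{nl}(X)f=-\tr(B\partial)f$ and $\pi_{nr}(X)g=-\tr(B\partial)g$, I would show $\tr(B\partial)$ is skew for the pairing by integrating the exact $3$-form $d[\,g\cdot(Dz\otimes Z\otimes\cdots\otimes Z)\cdot\ddd^{-1}f\,]$ over a spherical shell and invoking Lemma \ref{zf-regular} together with the independence of $R$; the one subtlety is the commutator of $\tr(B\partial)$ with $\ddd^{-1}$, since $\tr(B\partial)$ lowers degree by one and intertwines $(\deg+m)$ with $(\deg+m-1)$.

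The main obstacle is the lower-triangular block $\bigl(\begin{smallmatrix}0&0\\C&0\end{smallmatrix}\bigr)$ (the special conformal directions), which cannot be obtained from the parabolic directions $\{A,B,D\}$ by brackets and so must be treated on its own. Here $\pi_{nl}(X)f=\tr(ZCZ\partial+CZ)f+\sum_j(1\otimes\cdots\otimes CZ\otimes\cdots\otimes 1)f$ raises the degree, so it does not commute with $\ddd^{-1}$, and the tensor terms $(1\otimes\cdots\otimes CZ\otimes\cdots\otimes1)$ must be paired against the analogous terms coming from $\pi_{nr}(X)g$. I would track the commutator $[\ddd,\pi_{nl}(X)]$ explicitly, convert the resulting expressions into boundary integrals that vanish by the closedness coming from Lemma \ref{zf-regular}, and cancel the remaining tensor contributions using the antisymmetry of Lemma \ref{antisymmetric} (exactly as in the proof of Theorem \ref{dr-action-thm}) together with the identity $2(\deg+2)=Z^+\nabla^++\nabla Z$ of (\ref{deg-nabla}). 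An attractive alternative, which I would pursue if the direct computation becomes unwieldy, is to prove invariance under the single inversion $Z\mapsto Z^{-1}$ (an element of $GL(2,\HC)$ preserving $\BB H^\times$, under which $f_+$ and $f_-$ interchange) by a change of variables as in the $SU(2)\times SU(2)$ case, and then deduce the $C$-block from the already-established $B$-block by conjugation, since inversion exchanges translations with special conformal transformations.
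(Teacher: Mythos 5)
Your overall strategy coincides with the paper's: reduce to the diagonal subgroup (handled by the change of variables $\tilde Z = a^{-1}Zd$, which covers the $A$- and $D$-blocks), the nilpotent block $\bigl(\begin{smallmatrix}0&B\\0&0\end{smallmatrix}\bigr)$, and the inversion $\bigl(\begin{smallmatrix}0&1\\1&0\end{smallmatrix}\bigr)$, with the special conformal block recovered by conjugating the $B$-block by the inversion. Your ``attractive alternative'' for the $C$-block is exactly what the paper does, and you should commit to it from the start rather than to the direct computation of $[\ddd,\pi_{nl}\bigl(\begin{smallmatrix}0&0\\C&0\end{smallmatrix}\bigr)]$, which is the unwieldy option.

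The two steps you flag but do not resolve are where the real content lies, so the proposal as written has a genuine gap there. First, for the $B$-block the obstruction you correctly identify --- that $\tr(B\partial)$ lowers degree by one and hence does not commute with $\ddd^{-1}$ --- is resolved in the paper not by computing a commutator but by sidestepping it: formula (\ref{deg-switch}) lets one move $\ddd^{-1}$ from $f$ to $g$ (at the cost of $(-1)^{n-1}$), so that $\langle\pi_{nl}(X)f,g\rangle$ is expressed with the derivative acting on $f$ undisturbed by any $\ddd^{-1}$; one then substitutes the doubly regularized expression (\ref{pairing1-symm}), lets the derivative fall on the kernel $k_{n/2}(W-Z)$, trades $\partial/\partial w_{11}$ for $-\partial/\partial z_{11}$ using (\ref{k_n-symmetry}), and integrates by parts onto $g$. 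Without (\ref{deg-switch}) or an equivalent device, your shell-integration argument does not close. Second, for the inversion the change of variables is not as innocent as in the compact case, because $\ddd^{-1}$ must be carried through $Z\mapsto Z^{-1}$: one needs the identity
$$
(\deg+m)^{-1}\Bigl(\tfrac{Z^{-1}\otimes\cdots\otimes Z^{-1}}{N(Z)}\,f(Z^{-1})\Bigr)
= -\tfrac{Z^{-1}\otimes\cdots\otimes Z^{-1}}{N(Z)}\,\bigl((\deg+n+2-m)^{-1}f\bigr)(Z^{-1}),
$$
which happens to permute the factors of $\ddd^{-1}$ among themselves (as $m$ runs over $2,\dots,n$, so does $n+2-m$) and contributes to the overall sign $\langle\pi_{nl}(h)f,\pi_{nr}(h)g\rangle=-\langle f,g\rangle$, harmless for Lie-algebra invariance. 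Supplying these two lemmas would complete your argument and make it essentially identical to the paper's.
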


\begin{proof}
It is sufficient to show that the pairing is invariant under
$$
\BB H^{\times} \times \BB H^{\times} = \bigl\{
\bigl(\begin{smallmatrix} a & 0 \\ 0 & d \end{smallmatrix}\bigr) ;\:
a, d \in \BB H^{\times} \bigr\} \subset GL(2,\HC),
$$
$\bigl(\begin{smallmatrix} 0 & B \\ 0 & 0 \end{smallmatrix}\bigr)
\in \mathfrak{gl}(2,\HC)$, $B \in \HC$,
and inversion
$\bigl(\begin{smallmatrix} 0 & 1 \\ 1 & 0 \end{smallmatrix}\bigr)
\in GL(2,\HC)$.

First, let
$h= \bigl(\begin{smallmatrix} a & 0 \\ 0 & d \end{smallmatrix}\bigr)
\in GL(2,\HC)$, $a,d \in \BB H$, $\tilde Z = a^{-1}Zd$.
Recall that actions $\pi_{nl}$ and $\pi_{nr}$ are described by equations
(\ref{pi_nl})-(\ref{pi_nr}).
Using Proposition 11 from \cite{FL1} we obtain:
\begin{multline*}
2\pi^2 \cdot \langle \pi_{nl}(h)f, \pi_{nr}(h)g \rangle_{{\cal R}_n}
= \int_{Z \in S^3_R}
(\pi_{nr}(h)g)(Z) \cdot (Dz \otimes Z \otimes \cdots \otimes Z) \cdot
\bigl( \ddd^{-1} (\pi_{nl}(h)f) \bigr)(Z)  \\
= \int_{Z \in S^3_R} g(a^{-1}Zd) \cdot
\frac{a^{-1} \otimes \cdots \otimes a^{-1}}{N(a)}
\cdot (Dz \otimes Z \otimes \cdots \otimes Z) \cdot \ddd^{-1} \biggl(
\frac{d \otimes \cdots \otimes d}{N(d)^{-1}} \cdot f(a^{-1}Zd) \biggr)  \\
= \int_{\tilde Z \in S^3_{R'}} g(\tilde Z) \cdot
(D\tilde z \otimes \tilde Z \otimes \cdots \otimes \tilde Z)
\cdot \bigl( \ddd^{-1} f \bigr)(\tilde Z)
= 2\pi^2 \cdot \langle f,g \rangle_{{\cal R}_n},
\end{multline*}
where $R' = \sqrt{N(a)^{-1} \cdot N(d)} \cdot R$.

Next, we recall that matrices
$\bigl(\begin{smallmatrix} 0 & B \\ 0 & 0 \end{smallmatrix}\bigr)
\in \mathfrak{gl}(2,\HC)$, $B \in \HC$,
act by differentiation (Lemma \ref{Lie-alg-action}).
For example, if
$B= \bigl(\begin{smallmatrix} 1 & 0 \\ 0 & 0 \end{smallmatrix}\bigr) \in \HC$,
using expressions (\ref{deg-switch})-(\ref{pairing1-symm})
for the bilinear pairing and the symmetry relation (\ref {k_n-symmetry}),
we obtain:
%using the symmetric expression (\ref{pairing1-symm}) we obtain:
\begin{multline*}
4\pi^4 \cdot \Bigl\langle \pi_{nl}\bigl(\begin{smallmatrix} 0 & B \\
0 & 0 \end{smallmatrix}\bigr)f, g \Bigr\rangle_{{\cal R}_n}
= (-1)^{n-1} 2\pi^2 \int_{W \in S^3_R} \bigl( \ddd^{-1} g \bigr)(W)
\cdot (Dw \otimes W \otimes \cdots \otimes W) \cdot
\frac{\partial f}{\partial w_{11}} (W)  \\
%= \iint_{\genfrac{}{}{0pt}{}{Z \in S^3_r}{W \in S^3_R}} \bigl( \ddd^{-1} g \bigr)(W)
%\cdot (Dw \otimes W \otimes \cdots \otimes W) \cdot k_{n/2}(W-Z) \cdot
%(Dz \otimes Z \otimes \cdots \otimes Z) \cdot
%\biggl(\ddd^{-1} \frac{\partial f}{\partial z_{11}} \biggr)(Z)  \\
%- \iint_{\genfrac{}{}{0pt}{}{Z \in S^3_R}{W \in S^3_r}} \bigl( \ddd^{-1} g \bigr)(W)
%\cdot (Dw \otimes W \otimes \cdots \otimes W) \cdot k_{n/2}(W-Z) \cdot
%(Dz \otimes Z \otimes \cdots \otimes Z) \cdot
%\biggl(\ddd^{-1} \frac{\partial f}{\partial z_{11}} \biggr)(Z)  \\
= \iint_{\genfrac{}{}{0pt}{}{Z \in S^3_r}{W \in S^3_R}} \bigl( \ddd^{-1} g \bigr)(W)
\cdot (Dw \otimes W \otimes \cdots \otimes W) \cdot
\biggl( \frac{\partial}{\partial w_{11}} k_{n/2}(W-Z) \biggr) \cdot
(Dz \otimes Z \otimes \cdots \otimes Z) \cdot \bigl(\ddd^{-1} f \bigr)(Z)  \\
- \iint_{\genfrac{}{}{0pt}{}{Z \in S^3_R}{W \in S^3_r}} \bigl( \ddd^{-1} g \bigr)(W)
\cdot (Dw \otimes W \otimes \cdots \otimes W) \cdot
\biggl( \frac{\partial}{\partial w_{11}} k_{n/2}(W-Z) \biggr) \cdot
(Dz \otimes Z \otimes \cdots \otimes Z) \cdot \bigl(\ddd^{-1} f \bigr)(Z)  \\
= - \iint_{\genfrac{}{}{0pt}{}{Z \in S^3_r}{W \in S^3_R}} \bigl( \ddd^{-1} g \bigr)(W)
\cdot (Dw \otimes W \otimes \cdots \otimes W) \cdot
\biggl( \frac{\partial}{\partial z_{11}} k_{n/2}(W-Z) \biggr) \cdot
(Dz \otimes Z \otimes \cdots \otimes Z) \cdot \bigl(\ddd^{-1} f \bigr)(Z)  \\
+ \iint_{\genfrac{}{}{0pt}{}{Z \in S^3_R}{W \in S^3_r}} \bigl( \ddd^{-1} g \bigr)(W)
\cdot (Dw \otimes W \otimes \cdots \otimes W) \cdot
\biggl( \frac{\partial}{\partial z_{11}} k_{n/2}(W-Z) \biggr) \cdot
(Dz \otimes Z \otimes \cdots \otimes Z) \cdot \bigl(\ddd^{-1} f \bigr)(Z)  \\
%= - \iint_{\genfrac{}{}{0pt}{}{Z \in S^3_r}{W \in S^3_R}}
%\biggl( \ddd^{-1} \frac{\partial g}{\partial w_{11}} \biggr)(W)
%\cdot (Dw \otimes W \otimes \cdots \otimes W) \cdot k_{n/2}(W-Z) \cdot
%(Dz \otimes Z \otimes \cdots \otimes Z) \cdot \bigl(\ddd^{-1} f \bigr)(Z)  \\
%+ \iint_{\genfrac{}{}{0pt}{}{Z \in S^3_R}{W \in S^3_r}}
%\biggl( \ddd^{-1} \frac{\partial g}{\partial w_{11}} \biggr)(W)
%\cdot (Dw \otimes W \otimes \cdots \otimes W) \cdot k_{n/2}(W-Z) \cdot
%(Dz \otimes Z \otimes \cdots \otimes Z) \cdot \bigl(\ddd^{-1} f \bigr)(Z)  \\
= -2\pi^2 \int_{Z \in S^3_R} \frac{\partial g}{\partial z_{11}}(Z) \cdot
(Dz \otimes Z \otimes \cdots \otimes Z) \cdot \bigl(\ddd^{-1} f \bigr)(Z)
= - 4\pi^4 \cdot \Bigl\langle f, \pi_{nr}\bigl(\begin{smallmatrix} 0 & B \\
0 & 0 \end{smallmatrix}\bigr)g \Bigr\rangle_{{\cal R}_n}.
\end{multline*}

Finally, if
$h = \bigl(\begin{smallmatrix} 0 & 1 \\ 1 & 0 \end{smallmatrix}\bigr)
\in GL(2,\HC)$, changing the variable to $\tilde Z = Z^{-1}$ -- which is
an orientation reversing map $S^3_R \to S^3_{1/R}$ -- and using
Proposition 11 from \cite{FL1}, we have:
\begin{multline*}
2\pi^2 \cdot \langle \pi_{nl}(h)f, \pi_{nr}(h)g \rangle_{{\cal R}_n} \\
= (-1)^n \int_{Z \in S^3_R}
g(Z^{-1}) \cdot \frac{Z^{-1} \otimes \cdots \otimes Z^{-1}}{N(Z)} \cdot
(Dz \otimes Z \otimes \cdots \otimes Z) \cdot \ddd^{-1}
\biggl( \frac{Z^{-1} \otimes \cdots \otimes Z^{-1}}{N(Z)} \cdot f(Z^{-1}) \biggr)\\
= - \int_{Z \in S^3_R} g(Z^{-1}) \cdot
\frac{(Z^{-1} \cdot Dz \cdot Z^{-1}) \otimes Z^{-1} \otimes \cdots \otimes Z^{-1}}
{N(Z)^2} \cdot \bigl( \ddd^{-1} f \bigr)(Z^{-1}) \\
= - \int_{\tilde Z \in S^3_{1/R}} g(\tilde Z) \cdot
(D\tilde z \otimes \tilde Z \otimes \cdots \otimes \tilde Z) \cdot
\bigl( \ddd^{-1} f \bigr)(\tilde Z)
= - 2\pi^2 \cdot \langle f, g \rangle_{{\cal R}_n},
\end{multline*}
where for the second equality we used the property
\begin{multline*}
(\deg+m)^{-1}
\biggl( \frac{Z^{-1} \otimes \cdots \otimes Z^{-1}}{N(Z)} \cdot f(Z^{-1}) \biggr)\\
= - \frac{Z^{-1} \otimes \cdots \otimes Z^{-1}}{N(Z)}
\cdot \bigl( (\deg+n+2-m)^{-1} f \bigr)(Z^{-1}).
\end{multline*}
(Note that the negative sign in
$\langle \pi_{nl}(h)f, \pi_{nr}(h)g \rangle_{{\cal R}_n}
= - \langle f, g \rangle_{{\cal R}_n}$
does not affect the invariance of the bilinear pairing under the Lie algebra
$\mathfrak{gl}(2,\HC)$.)
%(Note that
%$$
%\ddd^{-1} \bigl( N(Z)^{-1} \cdot (Z^{-1} \otimes \cdots \otimes Z^{-1}) \cdot
%f(Z^{-1}) \bigr) 
%= (-1)^{n-1} N(Z)^{-1} \cdot (Z^{-1} \otimes \cdots \otimes Z^{-1}) \cdot
%(\ddd^{-1} f)(Z^{-1}),
%$$
%another negative sign appears when passing from $Dz$ to $D\tilde z$
%and a third negative sign appears because the map $Z \mapsto Z^{-1}$
%reverses the orientation of $3$-spheres.)
\end{proof}

Next we describe orthogonality relations for $n$-regular functions.
Recall functions $F^{(n)}_{l,\mu,\nu}$, $F'^{(n)}_{l,\mu,\nu}$,
$G^{(n)}_{l,\mu,\nu}$ and $G'^{(n)}_{l,\mu,\nu}$ introduced
in Section \ref{expansion-section}, these are the functions that appear in
matrix coefficient expansions of the Cauchy-Fueter kernel (\ref{k_n-expansion}).

\begin{prop}  \label{orthogonality-nreg-prop}
We have the following orthogonality relations:
\begin{equation}  \label{n-orthogonality1}
\langle F^{(n)}_{l,\mu,\nu}, G'^{(n)}_{l',\mu',\nu'} \rangle_{{\cal R}_n} =
(-1)^{n-1} \langle F'^{(n)}_{l,\mu,\nu}, G^{(n)}_{l',\mu',\nu'} \rangle_{{\cal R}_n} =
\delta_{ll'} \cdot \delta_{\mu\mu'} \cdot \delta_{\nu\nu'},
\end{equation}
\begin{equation}  \label{n-orthogonality2}
\langle F^{(n)}_{l,\mu,\nu}, G^{(n)}_{l',\mu',\nu'} \rangle_{{\cal R}_n}
= \langle F'^{(n)}_{l,\mu,\nu}, G'^{(n)}_{l',\mu',\nu'} \rangle_{{\cal R}_n} = 0.
\end{equation}
In particular, (\ref{pairing1}) is a non-degenerate bilinear pairing between
$$
\BB C\text{-span of } \bigl\{ F^{(n)}_{l,\mu,\nu}(Z) \bigr\}
\quad \text{and} \quad
\BB C\text{-span of } \bigl\{ G'^{(n)}_{l,\mu,\nu}(Z) \bigr\}
$$
and between
$$
\BB C\text{-span of } \bigl\{ F'^{(n)}_{l,\mu,\nu}(Z) \bigr\}
\quad \text{and} \quad
\BB C\text{-span of } \bigl\{ G^{(n)}_{l,\mu,\nu}(Z) \bigr\},
$$
%$$
%\BB C\text{-span of } \bigl\{ F^{(n)}_{l,\mu,\nu}(Z) \bigr\} \oplus
%\BB C\text{-span of } \bigl\{ F'^{(n)}_{l,\mu,\nu}(Z) \bigr\}
%$$
%and
%$$
%\BB C\text{-span of } \bigl\{ G^{(n)}_{l,\mu,\nu}(Z) \bigr\} \oplus
%\BB C\text{-span of } \bigl\{ G'^{(n)}_{l,\mu,\nu}(Z) \bigr\},
%$$
where $l = 0, \frac12, 1, \frac32, \dots$,
$\mu \in \BB Z +l+n/2$,
$\nu \in \BB Z +l$,
$-l-n/2 \le \mu \le l+n/2$, $-l \le \nu \le l$.
\end{prop}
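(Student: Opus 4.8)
The plan is to read the four pairings off the two kernel expansions of Proposition \ref{expansion-prop} by feeding them into the reproducing formulas that precede the statement. First I would establish the two ``diagonal'' relations in (\ref{n-orthogonality1}). Since $F^{(n)}_{l',\mu',\nu'}$ is left $n$-regular on all of $\HC$, the reproducing formula $f(W) = \langle k_{n/2}(Z-W), f(Z)\rangle_{{\cal R}_n}$ applies to it; substituting the first expansion $k_{n/2}(Z-W) = \sum_{l,\mu,\nu} F^{(n)}_{l,\mu,\nu}(W)\cdot G'^{(n)}_{l,\mu,\nu}(Z)$ and pairing termwise in $Z$ --- the kernel's $Z$-dependent factor $G'^{(n)}_{l,\mu,\nu}(Z)$ occupying the right $n$-regular slot, while the $W$-dependent factor $F^{(n)}_{l,\mu,\nu}(W)$ factors out --- yields
\begin{equation*}
F^{(n)}_{l',\mu',\nu'}(W) = \sum_{l,\mu,\nu} \langle F^{(n)}_{l',\mu',\nu'}, G'^{(n)}_{l,\mu,\nu}\rangle_{{\cal R}_n}\, F^{(n)}_{l,\mu,\nu}(W).
\end{equation*}
Because the functions $\{F^{(n)}_{l,\mu,\nu}(W)\}$ are linearly independent (they are derivatives of the distinct $SU(2)$ matrix coefficients $t^{l+n/2}$, separated by degree of homogeneity and by weight), comparing coefficients forces $\langle F^{(n)}_{l',\mu',\nu'}, G'^{(n)}_{l,\mu,\nu}\rangle_{{\cal R}_n} = \delta_{ll'}\delta_{\mu\mu'}\delta_{\nu\nu'}$. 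The same argument, now run with the right $n$-regular $G^{(n)}_{l',\mu',\nu'}$, the reproducing formula $g(W) = (-1)^{n-1}\langle g(Z), k_{n/2}(Z-W)\rangle_{{\cal R}_n}$, the second expansion $k_{n/2}(Z-W) = \sum F'^{(n)}_{l,\mu,\nu}(Z)\cdot G^{(n)}_{l,\mu,\nu}(W)$, and linear independence of $\{G^{(n)}_{l,\mu,\nu}\}$, gives $(-1)^{n-1}\langle F'^{(n)}_{l,\mu,\nu}, G^{(n)}_{l',\mu',\nu'}\rangle_{{\cal R}_n} = \delta_{ll'}\delta_{\mu\mu'}\delta_{\nu\nu'}$, the remaining equality in (\ref{n-orthogonality1}).

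For the two vanishing relations (\ref{n-orthogonality2}) I would argue by Stokes' theorem, exploiting that the integrand of (\ref{pairing1}) is a closed $3$-form whenever its two factors are respectively left and right $n$-regular (established just before the statement via Lemma \ref{zf-regular} and $\underset{k}{\nabla^+}\ddd^{-1}f=0$). For $\langle F^{(n)}_{l,\mu,\nu}, G^{(n)}_{l',\mu',\nu'}\rangle_{{\cal R}_n}$ both $F^{(n)}$ and $G^{(n)}$ are polynomials, hence regular at the origin; consequently $\ddd^{-1}F^{(n)}$, computed by the $\int_0^1 t^{m-1}\phi(tZ)\,dt$ formula, is left $n$-regular and smooth on the whole closed ball $\{N(Z)\le R^2\}$, so the closed form extends smoothly across the origin and $\int_{S^3_R} = \int_{\partial\{N(Z)\le R^2\}} = 0$ by Stokes. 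For $\langle F'^{(n)}_{l,\mu,\nu}, G'^{(n)}_{l',\mu',\nu'}\rangle_{{\cal R}_n}$ both factors are regular on $\HC^{\times}$, singular at the origin and decaying at infinity; applying Stokes on a spherical shell $\{R^2\le N(Z)\le R_2^2\}$ shows $\int_{S^3_R} = \int_{S^3_{R_2}}$, and the decay of the primed functions ($\sim N(Z)^{-1-n/2}$, as noted after the definition of $\tilde f_-$) makes $\int_{S^3_{R_2}}\to 0$ as $R_2\to\infty$, forcing the pairing to vanish.

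The main obstacle I anticipate is the termwise pairing in the first step: one must justify interchanging the infinite sum of Proposition \ref{expansion-prop} with the contour integral defining $\langle\,\cdot\,,\,\cdot\,\rangle_{{\cal R}_n}$. This is exactly where the uniform convergence on compact subsets of $\{WZ^{-1}\in\BB D^+\}$ enters --- choosing $S^3_R$ and the point $W$ so that $WZ^{-1}\in\BB D^+$ for every $Z\in S^3_R$ makes the expansion converge uniformly along the contour, legitimizing term-by-term integration; since the resulting identity then holds for $W$ ranging over an open set on which the $F^{(n)}_{l,\mu,\nu}$ are analytically independent, the coefficient comparison is valid. Finally, the non-degeneracy statement is immediate: relations (\ref{n-orthogonality1})--(\ref{n-orthogonality2}) assert that $\{F^{(n)}_{l,\mu,\nu}\}$ is dual to $\{G'^{(n)}_{l,\mu,\nu}\}$, and $\{F'^{(n)}_{l,\mu,\nu}\}$ dual to $\{G^{(n)}_{l,\mu,\nu}\}$ (up to the sign $(-1)^{n-1}$), so each Gram matrix is a sign times the identity and the pairing is non-degenerate on each pair of spans.
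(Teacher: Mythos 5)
Your proposal is correct and follows essentially the same route as the paper: the diagonal relations (\ref{n-orthogonality1}) are read off from the kernel expansions of Proposition \ref{expansion-prop} fed into the reproducing formulas, and the vanishing relations come from the contour-independence of the pairing. The only cosmetic difference is that the paper disposes of (\ref{n-orthogonality2}) and the $l\ne l'$ cases in one stroke by a pure degree count (the integral over $S^3_R$ of a pairing of homogeneous functions scales as $R^{d_f+d_g+n+2}$, so $R$-independence forces it to vanish unless $d_f+d_g=-(n+2)$), whereas you reach the same conclusion via Cauchy's theorem on the ball and a decay estimate at infinity.
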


\begin{proof}
Observe that the functions $F^{(n)}_{l,\mu,\nu}$ and $G^{(n)}_{l,\mu,\nu}$
are homogeneous of degree $2l$, while the functions $F'^{(n)}_{l,\mu,\nu}$
and $G'^{(n)}_{l,\mu,\nu}$ are homogeneous of degree $-(2l+n+2)$.
On the one hand, the bilinear pairing (\ref{pairing1}) is independent
of the choice of radius $R>0$.
Recall that, by Lemma 6 in \cite{FL1}, the $3$-form $Dz$ restricted to $S^3_R$
equals $Z\,dS/R$. It follows that if $f$ is homogeneous of degree $d_f$
and $g$ is homogeneous of degree $d_g$, then either
$\langle f,g \rangle_{{\cal R}_n} =0$ or $d_f+d_g=-(n+2)$.
This proves (\ref{n-orthogonality2}) and
$$
\langle F^{(n)}_{l,\mu,\nu}, G'^{(n)}_{l',\mu',\nu'} \rangle_{{\cal R}_n} =
\langle F'^{(n)}_{l,\mu,\nu}, G^{(n)}_{l',\mu',\nu'} \rangle_{{\cal R}_n} = 0
\qquad \text{if $l \ne l'$}.
$$
Then (\ref{n-orthogonality1}) follows from the expansions (\ref{k_n-expansion}),
the Cauchy-Fueter formulas (Theorem \ref{Fueter-n-reg})
and equation (\ref{deg-switch}).
\end{proof}

\begin{cor}  \label{Laurent-coeff}
The coefficients $a_{l,\mu,\nu}$, $b_{l,\mu,\nu}$, $c_{l,\mu,\nu}$ and $d_{l,\mu,\nu}$
of Laurent expansions of $n$-regular functions given in
Corollary \ref{Laurent-expansion} are given by the following expressions:
\begin{center}
\begin{tabular}{lcl}
$a_{l.\mu,\nu} = \langle f, G'^{(n)}_{l,\mu,\nu} \rangle_{{\cal R}_n}$, & \qquad &
$b_{l.\mu,\nu} = (-1)^{n-1} \langle f, G^{(n)}_{l,\mu,\nu} \rangle_{{\cal R}_n}$, \\
$c_{l.\mu,\nu} = (-1)^{n-1} \langle F'^{(n)}_{l,\mu,\nu}, g \rangle_{{\cal R}_n}$,
& \qquad & $d_{l.\mu,\nu} = \langle F^{(n)}_{l,\mu,\nu}, g \rangle_{{\cal R}_n}$.
\end{tabular}
\end{center}
\end{cor}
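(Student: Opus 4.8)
The plan is to read off the Laurent coefficients directly from the orthogonality relations established in Proposition \ref{orthogonality-nreg-prop}, using that the pairing $\langle\,\cdot\,,\,\cdot\,\rangle_{{\cal R}_n}$ is bilinear and continuous on compact subsets of $S^3_R$, so it may be exchanged with the (uniformly convergent) Laurent series of Corollary \ref{Laurent-expansion}. First I would recall the expansion $f = f_+ + f_-$ with $f_+(X) = \sum_l \sum_{\mu,\nu} a_{l,\mu,\nu} F^{(n)}_{l,\mu,\nu}(X)$ and $f_-(X) = \sum_l \sum_{\mu,\nu} b_{l,\mu,\nu} F'^{(n)}_{l,\mu,\nu}(X)$, and similarly for $g = g_+ + g_-$. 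To isolate $a_{l,\mu,\nu}$, I would pair $f$ against the dual basis function $G'^{(n)}_{l,\mu,\nu}$ and invoke bilinearity to get
\[
\langle f, G'^{(n)}_{l,\mu,\nu} \rangle_{{\cal R}_n}
= \sum_{l',\mu',\nu'} a_{l',\mu',\nu'} \langle F^{(n)}_{l',\mu',\nu'}, G'^{(n)}_{l,\mu,\nu} \rangle_{{\cal R}_n}
+ \sum_{l',\mu',\nu'} b_{l',\mu',\nu'} \langle F'^{(n)}_{l',\mu',\nu'}, G'^{(n)}_{l,\mu,\nu} \rangle_{{\cal R}_n}.
\]
By (\ref{n-orthogonality2}) the second sum vanishes identically, and by (\ref{n-orthogonality1}) the first sum collapses to the single term $a_{l,\mu,\nu}$, giving $a_{l,\mu,\nu} = \langle f, G'^{(n)}_{l,\mu,\nu} \rangle_{{\cal R}_n}$.

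The remaining three coefficients follow by the same mechanism with the appropriate dual partner and sign. Pairing $f$ against $G^{(n)}_{l,\mu,\nu}$ kills the $f_+$ part (again by (\ref{n-orthogonality2})) and, using the factor $(-1)^{n-1}$ in the $F'$--$G$ relation of (\ref{n-orthogonality1}), yields $b_{l,\mu,\nu} = (-1)^{n-1}\langle f, G^{(n)}_{l,\mu,\nu} \rangle_{{\cal R}_n}$. For the right-regular $g$, I would pair $F'^{(n)}_{l,\mu,\nu}$ against $g$ on the left slot: the $g_-$ part drops out by (\ref{n-orthogonality2}) and the surviving $F'$--$G$ term carries the factor $(-1)^{n-1}$, so $c_{l,\mu,\nu} = (-1)^{n-1}\langle F'^{(n)}_{l,\mu,\nu}, g \rangle_{{\cal R}_n}$. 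Finally, pairing $F^{(n)}_{l,\mu,\nu}$ against $g$ isolates the $G'$-coefficient with trivial sign, giving $d_{l,\mu,\nu} = \langle F^{(n)}_{l,\mu,\nu}, g \rangle_{{\cal R}_n}$.

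The only genuine point requiring care is the interchange of the bilinear pairing with the infinite sums, and I expect this to be the main (though mild) obstacle. The pairing (\ref{pairing1}) is an integral over the fixed compact sphere $S^3_R$, and the expansions of Corollary \ref{Laurent-expansion}, inherited from (\ref{k_n-expansion}), converge uniformly on compact subsets; since $S^3_R$ is compact, the uniform convergence of $f_+$ (for $R$ large enough that $S^3_R$ lies in its domain of convergence) and of $f_-$ (for $R$ small enough) justifies integrating term by term. Because the pairing is independent of $R$ by the argument preceding Proposition \ref{orthogonality-nreg-prop}, one may select $R$ appropriately for the $f_+$ and $f_-$ pieces separately, which legitimizes term-by-term pairing against each basis function. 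Everything else is purely formal bookkeeping with the two orthogonality relations, so once the interchange is granted, the four formulas drop out immediately.
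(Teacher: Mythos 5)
Your proposal is correct and follows exactly the route the paper intends: the corollary is stated as an immediate consequence of Proposition \ref{orthogonality-nreg-prop} applied term by term to the expansions of Corollary \ref{Laurent-expansion}, which is precisely your argument, including the correct bookkeeping of the $(-1)^{n-1}$ factors and the justification of term-by-term pairing via uniform convergence on the compact sphere.
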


\section{Spaces of $n$-Regular Functions as Representations of
  the Conformal Lie Algebra}  \label{Harish-Chandra-section}

In this section we identify the irreducible components and the $K$-types
of the spaces of left and right $n$-regular functions on $\BB H^{\times}$
regarded as representations of the conformal Lie algebra $\mathfrak{gl}(2,\HC)$.
Let
\begin{equation}
{\cal F}_n^+ = \BB C\text{-span of } \bigl\{ F^{(n)}_{l,\mu,\nu}(Z) \bigr\},
\qquad
{\cal F}_n^- = \BB C\text{-span of } \bigl\{ F'^{(n)}_{l,\mu,\nu}(Z) \bigr\},
\end{equation}
\begin{equation}
{\cal G}_n^+ = \BB C\text{-span of } \bigl\{ G^{(n)}_{l,\mu,\nu}(Z) \bigr\},
\qquad
{\cal G}_n^- = \BB C\text{-span of } \bigl\{ G'^{(n)}_{l,\mu,\nu}(Z) \bigr\},
\end{equation}
where
$$
l = 0, \frac12, 1, \frac32, \dots, \quad
\mu \in \BB Z +l+n/2, \quad \nu \in \BB Z +l, \quad
-l-n/2 \le \mu \le l+n/2, \quad -l \le \nu \le l.
$$

We can think of
$\BB C[z^0,z^1,z^2,z^3] \otimes (\BB S \odot \cdots \odot \BB S)$
as the space of $(\BB S \odot \cdots \odot \BB S)$-valued polynomials on
$\BB H$ or $\HC$.
Similarly,
$\BB C[z^0,z^1,z^2,z^3,N(Z)^{-1}] \otimes (\BB S' \odot \cdots \odot \BB S')$
can be thought as $(\BB S' \odot \cdots \odot \BB S')$-valued Laurent
polynomials on $\HC^{\times}$.
We can give basis-free descriptions of the spaces
${\cal F}_n^{\pm}$ and ${\cal G}_n^{\pm}$:

\begin{prop}  \label{basis-free-description}
We have:
\begin{align*}
{\cal F}_n^+ &= \{ \text{left $n$-regular polynomials }
f \in \BB C[z^0,z^1,z^2,z^3] \otimes (\BB S \odot \cdots \odot \BB S) \},  \\
{\cal G}_n^+ &= \{ \text{right $n$-regular polynomials }
g \in \BB C[z^0,z^1,z^2,z^3] \otimes (\BB S' \odot \cdots \odot \BB S') \},  \\
{\cal F}_n^+ \oplus {\cal F}_n^- &= \{ \text{left $n$-regular functions }
f \in \BB C[z^0,z^1,z^2,z^3,N(Z)^{-1}] \otimes (\BB S \odot \cdots \odot \BB S)
\},  \\
{\cal G}_n^+ \oplus {\cal G}_n^- &= \{ \text{right $n$-regular functions }
g \in \BB C[z^0,z^1,z^2,z^3,N(Z)^{-1}] \otimes (\BB S' \odot \cdots \odot \BB S')
\},  \\
{\cal F}_n^- &= \{ \text{left $n$-regular functions }
f \in \BB C[z^0,z^1,z^2,z^3,N(Z)^{-1}] \otimes (\BB S \odot \cdots \odot \BB S);\\
& \hskip1in
\pi_{nl} \bigl(\begin{smallmatrix} 0 & 1 \\ 1 & 0 \end{smallmatrix}\bigr) 
f \in \BB C[z^0,z^1,z^2,z^3] \otimes (\BB S \odot \cdots \odot \BB S) \},  \\
{\cal G}_n^- &= \{ \text{right $n$-regular functions }
g \in \BB C[z^0,z^1,z^2,z^3,N(Z)^{-1}] \otimes (\BB S' \odot \cdots \odot \BB S');\\
& \hskip1in
\pi_{nr} \bigl(\begin{smallmatrix} 0 & 1 \\ 1 & 0 \end{smallmatrix}\bigr) 
g \in \BB C[z^0,z^1,z^2,z^3] \otimes (\BB S' \odot \cdots \odot \BB S') \}.
\end{align*}
In particular, we have complex vector space isomorphisms
$$
\pi_{nl}\bigl(\begin{smallmatrix} 0 & 1 \\ 1 & 0 \end{smallmatrix}\bigr):
{\cal F}_n^+ \longleftrightarrow {\cal F}_n^-
\qquad \text{and} \qquad
\pi_{nr}\bigl(\begin{smallmatrix} 0 & 1 \\ 1 & 0 \end{smallmatrix}\bigr):
{\cal G}_n^+ \longleftrightarrow {\cal G}_n^-.
$$
\end{prop}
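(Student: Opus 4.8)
The plan is to reduce each of the six equalities to the Laurent-type decomposition of Section~\ref{deg-inverse-section}, organized by homogeneity degree, together with the action of the inversion $h_0 = \bigl(\begin{smallmatrix} 0 & 1 \\ 1 & 0 \end{smallmatrix}\bigr)$. As noted in the proof of Proposition~\ref{orthogonality-nreg-prop}, the functions $F^{(n)}_{l,\mu,\nu}$ and $G^{(n)}_{l,\mu,\nu}$ are homogeneous of degree $2l \ge 0$, whereas $F'^{(n)}_{l,\mu,\nu}$ and $G'^{(n)}_{l,\mu,\nu}$ are homogeneous of degree $-(2l+n+2) \le -(n+2)$. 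Since these two ranges of degrees are disjoint, ${\cal F}_n^+ \cap {\cal F}_n^- = 0$ and ${\cal G}_n^+ \cap {\cal G}_n^- = 0$, so the sums appearing in the statement are automatically direct.

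First I would prove the two direct-sum identifications, which carry the essential content, and then read off the polynomial descriptions. The inclusions $\subseteq$ are clear: each $F^{(n)}_{l,\mu,\nu}$ is a differentiated polynomial $t^{l+n/2}_{\ldots}(Z)$ and so lies in $\BB C[z^0,z^1,z^2,z^3] \otimes (\BB S \odot \cdots \odot \BB S)$, while each $F'^{(n)}_{l,\mu,\nu}$ is built from $N(Z)^{-1}\cdot t^{l+n/2}_{\ldots}(Z^{-1})$ and so lies in $\BB C[z^0,z^1,z^2,z^3,N(Z)^{-1}] \otimes (\BB S \odot \cdots \odot \BB S)$; all are left $n$-regular. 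For the reverse inclusion let $f$ be any left $n$-regular function in $\BB C[z^0,z^1,z^2,z^3,N(Z)^{-1}] \otimes (\BB S \odot \cdots \odot \BB S)$. Restricting to $\BB H^{\times}$ and applying Proposition~\ref{f+f-prop} and Corollary~\ref{Laurent-expansion}, I write $f = f_+ + f_-$ with $f_+ = \sum a_{l,\mu,\nu} F^{(n)}_{l,\mu,\nu}$ and $f_- = \sum b_{l,\mu,\nu} F'^{(n)}_{l,\mu,\nu}$. Because $f$ is a Laurent polynomial it has only finitely many nonzero homogeneous components; matching degrees against the two families forces all but finitely many coefficients to vanish, so $f_+ \in {\cal F}_n^+$ and $f_- \in {\cal F}_n^-$, which gives ${\cal F}_n^+ \oplus {\cal F}_n^-$. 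The polynomial description is then immediate: a left $n$-regular polynomial $f$ has no negative-degree components, so $f_- = 0$ and $f = f_+ \in {\cal F}_n^+$, while the opposite inclusion was already noted. The right-regular identifications are proved identically using the $g$-versions of these results.

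It remains to treat ${\cal F}_n^-$, ${\cal G}_n^-$ and the two isomorphisms, which I would handle simultaneously via $h_0$. For this element (\ref{pi_nl}) specializes to $\bigl(\pi_{nl}(h_0)f\bigr)(Z) = N(Z)^{-1} (Z^{-1} \otimes \cdots \otimes Z^{-1}) \cdot f(Z^{-1})$, which preserves left $n$-regularity by Theorem~\ref{dr-action-thm}, preserves $\BB C[z^0,z^1,z^2,z^3,N(Z)^{-1}] \otimes (\BB S \odot \cdots \odot \BB S)$, and squares to the identity since $h_0^2 = 1$. A homogeneous function of degree $d$ is sent to one of degree $-d-n-2$, so $\pi_{nl}(h_0)$ maps the nonnegative-degree space ${\cal F}_n^+$ to left $n$-regular Laurent functions of degree $\le -(n+2)$, which lie in ${\cal F}_n^-$ by the direct-sum identification, and conversely; being an involution, it interchanges ${\cal F}_n^+$ and ${\cal F}_n^-$ bijectively, which is the asserted isomorphism. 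The characterization of ${\cal F}_n^-$ is then a restatement: $f \in {\cal F}_n^-$ if and only if $f$ is a left $n$-regular Laurent function with $\pi_{nl}(h_0)f \in {\cal F}_n^+$, and ${\cal F}_n^+$ is the space of left $n$-regular polynomials by the previous step. The statements for ${\cal G}_n^-$ and the second isomorphism follow verbatim with $\pi_{nr}$ replacing $\pi_{nl}$.

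The step I expect to be delicate is the truncation in the direct-sum identification: one must be certain that the a priori infinite expansions of Proposition~\ref{f+f-prop} and Corollary~\ref{Laurent-expansion} actually terminate for a Laurent-polynomial $f$. This rests on the fact that an $n$-regular homogeneous function can occur only in degrees $\ge 0$ or $\le -(n+2)$ --- the intermediate degrees $-2, \dots, -n$ being excluded by the invertibility of $(\deg+m)$ established in Proposition~\ref{ddd-inverse-prop}, and degree $-(n+1)$ by the decay estimate $\sim N(W)^{-1-n/2}$ recorded in Proposition~\ref{f+f-prop} --- so that no homogeneous component of $f$ can fall outside the degree ranges covered by the $F^{(n)}$- and $F'^{(n)}$-families, and in each admissible degree only finitely many basis functions appear.
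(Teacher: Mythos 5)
Your proposal is correct and follows essentially the same route as the paper: the first four identifications are read off from Proposition \ref{f+f-prop} and Corollaries \ref{Laurent-expansion}--\ref{Laurent-coeff}, and the descriptions of ${\cal F}_n^-$, ${\cal G}_n^-$ together with the two isomorphisms come from the inversion $\bigl(\begin{smallmatrix} 0 & 1 \\ 1 & 0 \end{smallmatrix}\bigr)$ combined with a comparison of homogeneity degrees. You simply spell out in more detail (correctly) the finiteness/truncation point that the paper's terse proof leaves implicit.
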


\begin{proof}
The descriptions of ${\cal F}_n^+$, ${\cal G}_n^+$,
${\cal F}_n^+ \oplus {\cal F}_n^-$ and ${\cal G}_n^+ \oplus {\cal G}_n^-$
follow from Corollaries \ref{Laurent-expansion} and \ref{Laurent-coeff}.
Note that the inversion
$\bigl(\begin{smallmatrix} 0 & 1 \\ 1 & 0 \end{smallmatrix}\bigr) \in GL(2,\HC)$
preserves
$\BB C[z^0,z^1,z^2,z^3,N(Z)^{-1}] \otimes (\BB S \odot \cdots \odot \BB S)$
as well as ${\cal F}_n^+ \oplus {\cal F}_n^-$.
Comparing the degrees of homogeneity, we see that 
$\bigl(\begin{smallmatrix} 0 & 1 \\ 1 & 0 \end{smallmatrix}\bigr)$
switches ${\cal F}_n^+$ and ${\cal F}_n^-$,
and the description of ${\cal F}_n^-$ follows.
Finally, the case of ${\cal G}_n^-$ is similar to that of ${\cal F}_n^-$.
\end{proof}

Recall that we realize the maximal compact subgroup $U(2) \times U(2)$
of $U(2,2)$ as well as $SU(2) \times SU(2)$ as diagonal subgroups of
$GL(2,\HC)$ using (\ref{U(2)xU(2)})-(\ref{SU(2)xSU(2)}).
%$$
%U(2) \times U(2) = \bigl\{
%\bigl(\begin{smallmatrix} a & 0 \\ 0 & d \end{smallmatrix}\bigr)
%\in GL(2,\HC); \: a,d \in \HC^{\times} \bigr\}.
%$$
%Then $SU(2) \times SU(2)$ is realized as
%$$
%SU(2) \times SU(2) = \bigl\{
%\bigl(\begin{smallmatrix} a & 0 \\ 0 & d \end{smallmatrix}\bigr)
%\in GL(2,\HC); \: a,d \in \BB H,\: N(a)=N(d)=1 \bigr\}.
%$$

\begin{prop}
The spaces ${\cal F}_n^+$, ${\cal F}_n^-$ are invariant under the
$\pi_{nl}$-actions of $U(2) \times U(2)$ and $\mathfrak{gl}(2,\HC)$.
Similarly, the spaces ${\cal G}_n^+$, ${\cal G}_n^-$ are invariant under the
$\pi_{nr}$-actions of $U(2) \times U(2)$ and $\mathfrak{gl}(2,\HC)$.
\end{prop}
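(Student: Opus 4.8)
The plan is to avoid computing the $\pi_{nl}$- and $\pi_{nr}$-actions directly on the basis functions $F^{(n)}_{l,\mu,\nu}$, $F'^{(n)}_{l,\mu,\nu}$, $G^{(n)}_{l,\mu,\nu}$, $G'^{(n)}_{l,\mu,\nu}$, and instead to combine the basis-free descriptions of Proposition \ref{basis-free-description} with the conformal invariance already proved in Theorem \ref{dr-action-thm}. Since Theorem \ref{dr-action-thm} guarantees that both actions preserve left (respectively right) $n$-regularity, for each of the four spaces it remains only to check that the relevant rationality condition is preserved: being an honest polynomial for ${\cal F}_n^+$ and ${\cal G}_n^+$, and lying in the $\pi(w)$-image of a polynomial for ${\cal F}_n^-$ and ${\cal G}_n^-$, where $w = \bigl(\begin{smallmatrix} 0 & 1 \\ 1 & 0\end{smallmatrix}\bigr)$.

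First I would treat ${\cal F}_n^+$, which by Proposition \ref{basis-free-description} is exactly the space of left $n$-regular elements of $\BB C[z^0,z^1,z^2,z^3] \otimes (\BB S \odot \cdots \odot \BB S)$. For the Lie algebra I would run through the four block types of Lemma \ref{Lie-alg-action}: each of $-\tr(AZ\partial)$, $-\tr(B\partial)$, $\tr(ZCZ\partial+CZ)$ together with the tensor terms $\sum_j (1\otimes\cdots\otimes CZ\otimes\cdots\otimes 1)$, and $\tr(ZD\partial+D)$ together with $\sum_j(1\otimes\cdots\otimes D\otimes\cdots\otimes 1)$, is built from multiplication by the matrix entries of $Z$ and from the constant-coefficient derivatives $\partial_{ij}$; hence it sends $(\BB S\odot\cdots\odot\BB S)$-valued polynomials to $(\BB S\odot\cdots\odot\BB S)$-valued polynomials, never introducing a factor of $N(Z)^{-1}$. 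For the group $U(2)\times U(2)$ I would specialize (\ref{pi_nl}) to a diagonal $h=\bigl(\begin{smallmatrix} a & 0 \\ 0 & d\end{smallmatrix}\bigr)$, which acts by $f(Z)\mapsto N(d)\,(d\otimes\cdots\otimes d)\cdot f(a^{-1}Zd)$; as $Z\mapsto a^{-1}Zd$ is linear in the entries of $Z$ and $d\otimes\cdots\otimes d$ is a constant matrix, polynomials go to polynomials. Combined with Theorem \ref{dr-action-thm} this gives the invariance of ${\cal F}_n^+$, and the identical argument, using the $\pi_{nr}$ formulas of Lemma \ref{Lie-alg-action} and the specialization of (\ref{pi_nr}), handles ${\cal G}_n^+$.

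For the two remaining spaces I would use a conjugation argument rather than re-examine polynomiality. Proposition \ref{basis-free-description} supplies the isomorphism $\pi_{nl}(w):{\cal F}_n^+\to{\cal F}_n^-$, so ${\cal F}_n^-=\pi_{nl}(w){\cal F}_n^+$. Since $w^2=1$, conjugation by $w$ swaps the two diagonal blocks and therefore normalizes both $U(2)\times U(2)$ and $\mathfrak{gl}(2,\HC)$: explicitly $w^{-1}\bigl(\begin{smallmatrix} A & B \\ C & D\end{smallmatrix}\bigr)w=\bigl(\begin{smallmatrix} D & C \\ B & A\end{smallmatrix}\bigr)$ and $w^{-1}\bigl(\begin{smallmatrix} a & 0 \\ 0 & d\end{smallmatrix}\bigr)w=\bigl(\begin{smallmatrix} d & 0 \\ 0 & a\end{smallmatrix}\bigr)$, which again lie in $\mathfrak{gl}(2,\HC)$ and in $U(2)\times U(2)$. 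Using that the Lie algebra action $\pi_{nl}$ is the differential of the group action, one has $\pi_{nl}(X)\pi_{nl}(w)=\pi_{nl}(w)\pi_{nl}(w^{-1}Xw)$ for $X\in\mathfrak{gl}(2,\HC)$ and $\pi_{nl}(h)\pi_{nl}(w)=\pi_{nl}(w)\pi_{nl}(w^{-1}hw)$ for $h\in U(2)\times U(2)$. Applying $\pi_{nl}(w^{-1}Xw)$ or $\pi_{nl}(w^{-1}hw)$ preserves ${\cal F}_n^+$ by the previous step, so applying the outer $\pi_{nl}(w)$ shows $\pi_{nl}(X)$ and $\pi_{nl}(h)$ carry ${\cal F}_n^-=\pi_{nl}(w){\cal F}_n^+$ into itself. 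The same argument with $\pi_{nr}(w):{\cal G}_n^+\to{\cal G}_n^-$ settles ${\cal G}_n^-$.

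The main obstacle, and really the only computation, is the verification in the second paragraph that the four families of operators in Lemma \ref{Lie-alg-action} carry polynomials to polynomials; everything else is formal once Proposition \ref{basis-free-description} and Theorem \ref{dr-action-thm} are in hand. I would take slight care that the tensor-factor terms such as $(1\otimes\cdots\otimes CZ\otimes\cdots\otimes 1)f$ stay valued in the symmetric power $\BB S\odot\cdots\odot\BB S$, but this is precisely the stability already noted after (\ref{pi_nr}), so no new argument is required.
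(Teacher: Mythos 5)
Your argument is correct, but it is not the route the paper takes as its primary proof --- it is precisely the alternative that the authors credit to the referee in the last sentence of their proof (``another proof of this proposition can be obtained from Theorem \ref{dr-action-thm} and Proposition \ref{basis-free-description}''), here carried out in full. The paper's own argument works directly on the basis functions: for the generator $\bigl(\begin{smallmatrix} 0 & 0 \\ C & 0 \end{smallmatrix}\bigr)$ it observes that $\pi_{nl}\bigl(\begin{smallmatrix} 0 & 0 \\ C & 0 \end{smallmatrix}\bigr) F^{(n)}_{l,\mu,\nu}$ is left $n$-regular and homogeneous of degree $2l+1$, reproduces it via the Cauchy--Fueter formula (Theorem \ref{Fueter-n-reg}), expands the kernel by (\ref{k_n-expansion}), and uses the homogeneity argument from the proof of Proposition \ref{orthogonality-nreg-prop} to kill all terms except $l'=l+1/2$, concluding that the result is a finite linear combination of the $F^{(n)}_{l+1/2,\mu',\nu'}$. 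That computation buys explicit control of how the nilpotent generators move between the homogeneous pieces ${\cal F}_n^+(d)$, which is the mechanism reused in the irreducibility proof of Theorem \ref{irreducible-thm}. Your route is shorter and more conceptual: Theorem \ref{dr-action-thm} disposes of $n$-regularity, Proposition \ref{basis-free-description} reduces membership in ${\cal F}_n^+$ or ${\cal G}_n^+$ to polynomiality, which is immediate from the polynomial coefficients in Lemma \ref{Lie-alg-action} and the diagonal specialization of (\ref{pi_nl})--(\ref{pi_nr}), and the conjugation identity $\pi_{nl}(h)\pi_{nl}(w)=\pi_{nl}(w)\pi_{nl}(w^{-1}hw)$ with $w=\bigl(\begin{smallmatrix} 0 & 1 \\ 1 & 0\end{smallmatrix}\bigr)$ transports the result to ${\cal F}_n^-=\pi_{nl}(w){\cal F}_n^+$ and ${\cal G}_n^-$; note there is no circularity, since Proposition \ref{basis-free-description} rests only on Corollaries \ref{Laurent-expansion} and \ref{Laurent-coeff}. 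Both arguments are sound; yours trades the paper's explicit $K$-type bookkeeping for brevity.
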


\begin{proof}
To show that ${\cal F}_n^+$ is invariant under the $\pi_{nl}$-action of
$\mathfrak{gl}(2,\HC)$, we need to show that if
$F^{(n)}_{l,\mu,\nu}(Z) \in {\cal F}_n^+$ and
$\bigl(\begin{smallmatrix} A & B \\ C & D \end{smallmatrix}\bigr) \in
\mathfrak{gl}(2,\HC)$, then
$\pi_{nl} \bigl(\begin{smallmatrix} A & B \\ C & D \end{smallmatrix}\bigr)
F^{(n)}_{l,\mu,\nu}(Z) \in {\cal F}_n^+$.
Indeed, consider first a special case of $A=B=D=0$, then 
$F^{(n)}_{l,\mu,\nu}(Z)$ is homogeneous of degree $2l$ and
$\pi_{nl} \bigl(\begin{smallmatrix} 0 & 0 \\ C & 0 \end{smallmatrix}\bigr)
F^{(n)}_{l,\mu,\nu}(Z)$ is homogeneous of degree $2l+1$.
By the Cauchy-Fueter formula for $n$-regular functions
(Theorem \ref{Fueter-n-reg}),
\begin{multline}  \label{CF}
\frac{(2l+n+1)!}{(2l+2)!}
\pi_{nl} \bigl(\begin{smallmatrix} 0 & 0 \\ C & 0 \end{smallmatrix}\bigr)
F^{(n)}_{l,\mu,\nu}(W)  \\
= \frac1{2\pi^2} \int_{S^3_R} k_{n/2}(Z-W) \cdot
(Dz \otimes Z \otimes \cdots \otimes Z) \cdot
\Bigl( \pi_{nl} \bigl(\begin{smallmatrix} 0 & 0 \\ C & 0 \end{smallmatrix}\bigr)
F^{(n)}_{l,\mu,\nu}(Z) \Bigr),
\end{multline}
where $S^3_R \subset \BB H$ is the sphere centered at the origin of radius $R$
large enough so that $W \in \BB D^+_R$.
On the other hand, recall the expansion (\ref{k_n-expansion})
of the Cauchy-Fueter kernel $k_{n/2}(Z-W)$.
The integral
$$
\int_{S^3_R} G'^{(n)}_{l',\mu',\nu'}(Z) \cdot
(Dz \otimes Z \otimes \cdots \otimes Z) \cdot
\Bigl( \pi_{nl} \bigl(\begin{smallmatrix} 0 & 0 \\ C & 0 \end{smallmatrix}\bigr)
F^{(n)}_{l,\mu,\nu}(Z) \Bigr) = 0
$$
unless $l'=l+1/2$ (see the proof of Proposition \ref{orthogonality-nreg-prop}
based on the independence of the integral of $R>0$).
Then expansion (\ref{k_n-expansion}) together with (\ref{CF}) show that 
$\pi_{nl} \bigl(\begin{smallmatrix} 0 & 0 \\ C & 0 \end{smallmatrix}\bigr)
F^{(n)}_{l,\mu,\nu}(W)$
is a finite linear combination of $F^{(n)}_{l+1/2,\mu',\nu'}(W)$'s
%with $\mu' \in \BB Z +l+n/2+1/2$, $\nu' \in \BB Z +l+1/2$,
%$-l-n/2-1/2 \le \mu' \le l+n/2+1/2$, $-l-1/2 \le \nu' \le l+1/2$.
and hence an element of ${\cal F}_n^+$.
The other cases can be proved similarly.

As suggested by the referee, another proof of this proposition can be
obtained from Theorem \ref{dr-action-thm} and
Proposition \ref{basis-free-description}.
\end{proof}
  
For $d \in \BB Z$, define
$$
{\cal F}_n^+(d) =
\{ f(Z) \in {\cal F}_n^+;\: \text{$f(Z)$ is homogeneous of degree $d$} \}.
$$
Similarly, we define ${\cal F}_n^-(d)$, ${\cal G}_n^+(d)$ and ${\cal G}_n^-(d)$.
As in our previous papers, we denote by $(\tau_l,V_l)$ the irreducible
$(2l+1)$-dimensional representation of $SU(2)$ or $\mathfrak{sl}(2,\BB C)$,
with $l=0,\frac12,1,\frac32,\dots$.

%We realize $\mathfrak{sl}(2,\BB C) \times \mathfrak{sl}(2,\BB C)$
%as diagonal elements of $\mathfrak{gl}(2,\HC)$:
%$$
%\mathfrak{sl}(2,\BB C) \times \mathfrak{sl}(2,\BB C) = \left\{
%\left(\begin{smallmatrix} A & 0 \\ 0 & D \end{smallmatrix}\right)
%\in \mathfrak{gl}(2,\HC);\: A,D \in \HC, \re(A)=\re(D)=0 \right\}.
%$$

\begin{prop}  \label{K-types-prop}
Each ${\cal F}_n^{\pm}(d)$ is invariant under the $\pi_{nl}$ action restricted
to $SU(2) \times SU(2)$, and we have the following decomposition into
irreducible components:
$$
{\cal F}_n^+(2l) = V_l \boxtimes V_{l+\frac n2}, \qquad
{\cal F}_n^+(d) = \{0\} \quad \text{if $d < 0$},
$$
$$
{\cal F}_n^-(-2l-n-2) = V_{l+\frac n2} \boxtimes V_l, \qquad
{\cal F}_n^-(d) = \{0\} \quad \text{if $d > -(n+2)$},
$$
$l=0,\frac12,1,\frac32,\dots$.

Similarly, each ${\cal G}_n^{\pm}(d)$ is invariant under the $\pi_{nr}$ action
restricted to $SU(2) \times SU(2)$, and we have the following decomposition
into irreducible components:
$$
{\cal G}_n^+(2l) = V_{l+\frac n2} \boxtimes V_l, \qquad
{\cal G}_n^+(d) = \{0\} \quad \text{if $d < 0$},
$$
$$
{\cal G}_n^-(-2l-n-2) = V_l \boxtimes V_{l+\frac n2}, \qquad
{\cal G}_n^-(d) = \{0\} \quad \text{if $d > -(n+2)$},
$$
$l=0,\frac12,1,\frac32,\dots$.
\end{prop}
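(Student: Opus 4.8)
The plan is to identify each graded piece as an $SU(2)\times SU(2)$-module by exhibiting the basis functions as joint weight vectors for the diagonal maximal torus and then invoking the fact that a compact-group representation is determined by its character.

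First I would record the structural reductions. By the preceding proposition ${\cal F}_n^+$ is invariant under the $\pi_{nl}$-action of $U(2)\times U(2)\supset SU(2)\times SU(2)$, realized via (\ref{SU(2)xSU(2)}) as $\{\bigl(\begin{smallmatrix} a & 0\\ 0 & d\end{smallmatrix}\bigr): N(a)=N(d)=1\}$. For such an element (\ref{pi_nl}) specializes to $(\pi_{nl}(h)f)(Z)=(d\otimes\cdots\otimes d)\,f(a^{-1}Zd)$, and since $N(a^{-1}Zd)=N(a)^{-1}N(Z)N(d)=N(Z)$ while the target twist $(d\otimes\cdots\otimes d)$ is degree-preserving, the action preserves homogeneity; hence each ${\cal F}_n^+(d)$ is an $SU(2)\times SU(2)$-submodule. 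As observed in the proof of Proposition \ref{orthogonality-nreg-prop}, $F^{(n)}_{l,\mu,\nu}$ is homogeneous of degree $2l$, so (the $F^{(n)}_{l,\mu,\nu}$ being a basis) ${\cal F}_n^+(d)=\{0\}$ for $d<0$ and ${\cal F}_n^+(2l)=\Span\{F^{(n)}_{l,\mu,\nu}: -l\le\nu\le l,\ -l-\tfrac n2\le\mu\le l+\tfrac n2\}$, of dimension $(2l+1)(2l+n+1)=\dim(V_l\boxtimes V_{l+n/2})$.

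The heart of the argument is the weight computation, which I would carry out on the two torus factors separately. For the left torus $a=\mathrm{diag}(\lambda,\lambda^{-1})$, $d=1$, only the argument transforms; using the transformation law of $t^{l+n/2}_{\nu-n/2\,\underline\mu}$ under $Z\mapsto a^{-1}Z$ together with the chain rule, each of the $n$ first-row derivative columns $\begin{pmatrix}\partial_{11}\\\partial_{12}\end{pmatrix}$ raises the $\lambda$-weight by one, so the subscript shift $\nu\mapsto\nu-\tfrac n2$ is exactly undone and $F^{(n)}_{l,\mu,\nu}$ is an eigenvector of $\lambda$-weight $2\nu$. For the right torus $a=1$, $d=\mathrm{diag}(\rho,\rho^{-1})$, two effects combine: the argument $Z\mapsto Zd$ yields the bare factor $\rho^{-2\mu}$ from $t^{l+n/2}_{\nu-n/2\,\underline\mu}$, while differentiation produces an extra factor $(d^{-1}\otimes\cdots\otimes d^{-1})$ that is precisely cancelled by the target twist $(d\otimes\cdots\otimes d)$ in $\pi_{nl}$, leaving $\rho$-weight $-2\mu$. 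Thus $F^{(n)}_{l,\mu,\nu}$ is a joint weight vector of weight $(2\nu,-2\mu)$; these weights are distinct, and the character of ${\cal F}_n^+(2l)$ factors as $\bigl(\sum_{\nu=-l}^{l}\lambda^{2\nu}\bigr)\bigl(\sum_{\mu=-l-n/2}^{l+n/2}\rho^{-2\mu}\bigr)=\chi_{V_l}\cdot\chi_{V_{l+n/2}}$. Since ${\cal F}_n^+(2l)$ is a genuine $SU(2)\times SU(2)$-module with this character, it is isomorphic to the irreducible $V_l\boxtimes V_{l+n/2}$.

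Finally I would deduce the remaining three families. For ${\cal F}_n^-$, Proposition \ref{basis-free-description} furnishes the isomorphism $\pi_{nl}\bigl(\begin{smallmatrix}0&1\\1&0\end{smallmatrix}\bigr):{\cal F}_n^+\to{\cal F}_n^-$; conjugating $\bigl(\begin{smallmatrix}a&0\\0&d\end{smallmatrix}\bigr)$ by $\bigl(\begin{smallmatrix}0&1\\1&0\end{smallmatrix}\bigr)$ interchanges $a$ and $d$, so this intertwiner swaps the two $SU(2)$-factors, and since it carries degree $2l$ to the homogeneity degree $-(2l+n+2)$ of $F'^{(n)}_{l,\mu,\nu}$, we get ${\cal F}_n^-(-2l-n-2)=V_{l+n/2}\boxtimes V_l$ and ${\cal F}_n^-(d)=\{0\}$ for $d>-(n+2)$. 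The spaces ${\cal G}_n^{\pm}$ are treated by the same two steps applied to $\pi_{nr}$ and the row-valued basis $G^{(n)}_{l,\mu,\nu}$, $G'^{(n)}_{l,\mu,\nu}$; the only change is that in (\ref{pi_nr}) the target twist is by the left factor $a^{-1}\otimes\cdots\otimes a^{-1}$ rather than the right, interchanging the roles of the two $SU(2)$'s and producing $V_{l+n/2}\boxtimes V_l$ for ${\cal G}_n^+(2l)$ and $V_l\boxtimes V_{l+n/2}$ for ${\cal G}_n^-$. The main obstacle will be the cancellation in the right-torus step: one must check that the factor $(d\otimes\cdots\otimes d)$ twisting the target $\BB S\odot\cdots\odot\BB S$ exactly absorbs the factor produced by differentiating, so the second $SU(2)$ acts with \emph{bare} weight $-2\mu$ and $\mu$ genuinely exhausts $-l-\tfrac n2,\dots,l+\tfrac n2$ with multiplicity one. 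Equivalently, this is the assertion that coupling $V_{n/2}$ to the argument action and imposing the $n$ regularity conditions selects precisely the summand $V_{l+n/2}$ rather than the full Clebsch--Gordan decomposition of $V_l\otimes V_{n/2}$; the index bookkeeping built into the definition of $F^{(n)}_{l,\mu,\nu}$ (via Lemmas 22 and 23 of \cite{FL1}) is what makes this precise.
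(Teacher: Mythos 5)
Your proof is correct, and it follows the same basic strategy as the paper (identify each homogeneous graded piece by computing torus weights of the explicit basis functions, then transport the answer to the remaining families), but the execution differs in two genuine ways. For ${\cal F}_n^+(2l)$ the paper does \emph{not} compute the full character: it observes that the first $SU(2)$ acts by $\tau_l$ and that the second factor is a subrepresentation $V'$ of a tensor product of known pieces, counts $\dim V' = 2l+n+1$, and then checks only that the single extreme vector $F^{(n)}_{l,\mu=-l-n/2,\nu}$ (proportional to $\bigl(\begin{smallmatrix}1\\0\end{smallmatrix}\bigr)^{\otimes n} z_{11}^{l+\nu}z_{21}^{l-\nu}$) has weight $\lambda^{2l+n}$, which forces $V'\simeq V_{l+n/2}$. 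You instead compute the joint weight $(2\nu,-2\mu)$ of every $F^{(n)}_{l,\mu,\nu}$ and match characters; this is more work per basis vector but verifies directly the cancellation between the target twist $d\otimes\cdots\otimes d$ and the factor produced by the $n$ derivative columns — the point you correctly flag as the crux, and which your computation settles. The second difference is the treatment of ${\cal G}_n^{\pm}$: the paper gets these for free from the non-degenerate $SU(2)\times SU(2)$-invariant bilinear pairing between ${\cal F}_n^{\pm}(d)$ and ${\cal G}_n^{\mp}(-d-n-2)$ (Propositions \ref{invariant} and \ref{orthogonality-nreg-prop}, together with self-duality of $V_l\boxtimes V_{l+n/2}$), whereas you redo the weight computation for $\pi_{nr}$; your route is more self-contained, the paper's is shorter and reuses machinery already established. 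Both arguments rely, as they must, on the linear independence of the $F^{(n)}_{l,\mu,\nu}$ (so that $\dim{\cal F}_n^+(2l)=(2l+1)(2l+n+1)$), which ultimately comes from the orthogonality relations, so neither approach fully escapes Proposition \ref{orthogonality-nreg-prop}.
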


\begin{rem}
When $n=1$, this result becomes Proposition 21 in \cite{FL1}.
\end{rem}

\begin{proof}
Recall that the actions of $SU(2) \times SU(2)$ on ${\cal F}_n^{\pm}$ and
${\cal G}_n^{\pm}$ are obtained by restricting the actions of $GL(2,\HC)$
described by (\ref{pi_nl})-(\ref{pi_nr}).
It follows that $SU(2) \times SU(2)$ preserves each
${\cal F}_n^{\pm}(d)$ and ${\cal G}_n^{\pm}(d)$.

Next we identify each ${\cal F}_n^{\pm}(d)$ and ${\cal G}_n^{\pm}(d)$
as a representation of $SU(2) \times SU(2)$.
First, we consider the case of ${\cal F}_n^+(d)$.
It is clear from the definition of  ${\cal F}_n^+$
that ${\cal F}_n^+(d) = \{0\}$ when $d<0$. Consider now the case of
$$
{\cal F}_n^+(2l) =  \BB C\text{-span of } \Bigl\{ F^{(n)}_{l,\mu,\nu}(Z);\:
\begin{smallmatrix} \mu \in \BB Z +l+n/2, & & \nu \in \BB Z +l, \\
-l-n/2 \le \mu \le l+n/2, & & -l \le \nu \le l \end{smallmatrix} \Bigr\};
$$
this space has dimension $(2l+1)(2l+n+1)$.
It is clear from (\ref{pi_nl}) that the first copy of $SU(2)$
acts via $\tau_l$ and that, as representations of $SU(2) \times SU(2)$,
$$
{\cal F}_n^+(2l) \simeq V_l \boxtimes V',
$$
where $V'$ is a subrepresentation of $V_{\frac{n-1}2} \otimes V_l$.
Counting dimensions, we find that $\dim V' = 2l+n+1$.
To show that $V' \simeq V_{l+\frac n2}$, consider the action of an element
$d' = \bigl( \begin{smallmatrix} \lambda & 0 \\ 0 & \lambda^{-1}
\end{smallmatrix} \bigr)$ in the second copy of $SU(2)$.
By the representation theory of $SU(2)$, it is sufficient
to show that this element acts on each $F^{(n)}_{l,\mu=-l-\frac n2,\nu}(Z)$ by
multiplication by $\lambda^{2l+n}$.
Indeed, by the definition of $F^{(n)}_{l,\mu,\nu}(Z)$ (equation (\ref{F-def}))
and by Lemma 22 in \cite{FL1}, $F^{(n)}_{l,\mu=-l-\frac n2,\nu}(Z)$ is proportional
to
$$
\underbrace{\begin{pmatrix} 1 \\ 0 \end{pmatrix} \otimes \cdots \otimes
\begin{pmatrix} 1 \\ 0 \end{pmatrix}}_{\text{$n$ times}}
t^l_{\nu\,\underline{-l}}(Z),
$$
and the matrix coefficient $t^l_{\nu\,\underline{-l}}(Z)$ is in turn proportional
to $z_{11}^{l+\nu} z_{21}^{l-\nu}$.
Then it is easy to see from (\ref{pi_nl}) that the element
$h=\bigl( \begin{smallmatrix} 1 & 0 \\ 0 & d' \end{smallmatrix} \bigr)
\in SU(2) \times SU(2)$, where
$d' = \bigl( \begin{smallmatrix} \lambda & 0 \\ 0 & \lambda^{-1}
\end{smallmatrix} \bigr)$, acts on
$$
\underbrace{\begin{pmatrix} 1 \\ 0 \end{pmatrix} \otimes \cdots \otimes
\begin{pmatrix} 1 \\ 0 \end{pmatrix}}_{\text{$n$ times}} z_{11}^{l+\nu} z_{21}^{l-\nu}
$$
by multiplication by $\lambda^{2l+n}$.

To identify ${\cal F}_n^-(-2l-n-2)$ as a representation of $SU(2) \times SU(2)$,
observe that, by Proposition \ref{basis-free-description}, the element
$\bigl( \begin{smallmatrix} 0 & 1 \\ 1 & 0 \end{smallmatrix} \bigr)
\in GL(2,\HC)$ maps ${\cal F}_n^+(2l)$ into ${\cal F}_n^-(-2l-n-2)$
(and vice versa) and switches the two factors in $SU(2) \times SU(2)$.
Finally, we can identify ${\cal G}_n^{\pm}(d)$ using the bilinear pairing
(\ref{pairing1}) which, by Propositions \ref{invariant} and
\ref{orthogonality-nreg-prop}, restricts to non-degenerate
$SU(2) \times SU(2)$-invariant pairings between
$$
{\cal F}_n^+(d) \quad \text{and} \quad {\cal G}_n^-(-d-n-2)
$$
and between
$$
{\cal F}_n^-(d) \quad \text{and} \quad {\cal G}_n^+(-d-n-2).
$$
\end{proof}

We conclude our description of the representations ${\cal F}_n^{\pm}$
and ${\cal G}_n^{\pm}$ by showing their irreducibility.

\begin{thm}  \label{irreducible-thm}
We have:
$$
(\pi_{nl},{\cal F}_n^+), \quad (\pi_{nl},{\cal F}_n^-), \quad
(\pi_{nr},{\cal G}_n^+), \quad (\pi_{nr},{\cal G}_n^-)
$$
are irreducible representations of
$\mathfrak{sl}(2,\HC)$ (as well as $\mathfrak{gl}(2,\HC)$).
\end{thm}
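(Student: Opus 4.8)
The plan is to prove irreducibility of $(\pi_{nl},{\cal F}_n^+)$ directly and then deduce the other three cases by functoriality. Recall from Proposition \ref{basis-free-description} that $\pi_{nl}\bigl(\begin{smallmatrix}0&1\\1&0\end{smallmatrix}\bigr)$ is a vector-space isomorphism ${\cal F}_n^+\leftrightarrow{\cal F}_n^-$; since $\bigl(\begin{smallmatrix}0&1\\1&0\end{smallmatrix}\bigr)\in GL(2,\HC)$ intertwines the two restrictions of $\pi_{nl}$ up to the Lie-algebra automorphism $\operatorname{Ad}\bigl(\begin{smallmatrix}0&1\\1&0\end{smallmatrix}\bigr)$, submodules correspond and irreducibility of ${\cal F}_n^+$ yields that of ${\cal F}_n^-$. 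Moreover, by Propositions \ref{invariant} and \ref{orthogonality-nreg-prop} the pairing $\langle\cdot,\cdot\rangle_{{\cal R}_n}$ is non-degenerate and satisfies $\langle\pi_{nl}(X)f,g\rangle_{{\cal R}_n}=-\langle f,\pi_{nr}(X)g\rangle_{{\cal R}_n}$, so it identifies ${\cal G}_n^-$ and ${\cal G}_n^+$ with the contragredients of ${\cal F}_n^+$ and ${\cal F}_n^-$; as the contragredient of an irreducible module with finite-dimensional $K$-types is again irreducible, all four statements reduce to the single case ${\cal F}_n^+$.

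For ${\cal F}_n^+$ I would argue as follows. By Proposition \ref{K-types-prop} the $SU(2)\times SU(2)$-types of ${\cal F}_n^+$ are $V_l\boxtimes V_{l+n/2}$, each of multiplicity one and equal to the single homogeneous component ${\cal F}_n^+(2l)$; since $l\mapsto(V_l,V_{l+n/2})$ is injective, these are exactly the $K$-isotypic components. Hence any submodule $M$, being $K$-invariant, is the direct sum of those ${\cal F}_n^+(2l)$ it contains. By Lemma \ref{Lie-alg-action} the diagonal block $\mathfrak{k}_{\BB C}=\{\bigl(\begin{smallmatrix}A&0\\0&D\end{smallmatrix}\bigr)\}$ preserves degree, while $\mathfrak{p}^-=\{\bigl(\begin{smallmatrix}0&B\\0&0\end{smallmatrix}\bigr)\}$ acts by $f\mapsto-\tr(B\partial)f$ and lowers degree by one and $\mathfrak{p}^+=\{\bigl(\begin{smallmatrix}0&0\\C&0\end{smallmatrix}\bigr)\}$ raises degree by one. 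Thus $\pi_{nl}(\mathfrak{p}^+){\cal F}_n^+(2l)\subseteq{\cal F}_n^+(2l+1)$ is a $K$-submodule of an irreducible $K$-type, hence $0$ or all of it, and likewise $\pi_{nl}(\mathfrak{p}^-){\cal F}_n^+(2l)$ is $0$ or all of ${\cal F}_n^+(2l-1)$. Since the $K$-types form a single chain in degree, irreducibility is equivalent to the statement that none of these connecting maps vanishes.

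The crux is to verify this nonvanishing, which I expect to be the only genuinely delicate point. I would test the operators on the explicit extreme-weight vectors identified in the proof of Proposition \ref{K-types-prop}: up to a nonzero scalar, $F^{(n)}_{l,-l-n/2,\nu}(Z)$ equals $\bigl(\begin{smallmatrix}1\\0\end{smallmatrix}\bigr)^{\otimes n}z_{11}^{l+\nu}z_{21}^{l-\nu}$. Applying $\pi_{nl}\bigl(\begin{smallmatrix}0&0\\C&0\end{smallmatrix}\bigr)$ and $\pi_{nl}\bigl(\begin{smallmatrix}0&B\\0&0\end{smallmatrix}\bigr)$ to such a monomial via Lemma \ref{Lie-alg-action}, one checks for a suitable $C$ (resp. $B$) that the outcome is a nonzero homogeneous function of degree $2l+1$ (resp. $2l-1$), the tensorial terms $\tr(CZ)$ and $\sum_j(1\otimes\cdots\otimes CZ\otimes\cdots\otimes1)$ being unable to cancel against each other. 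An alternative, perhaps cleaner, route is induction on $n$: the case $n=1$ is Proposition 21 of \cite{FL1}, and the recursive relations of Lemma \ref{recursive-lem} propagate the degree shift for ${\cal F}_n^+$ from that for ${\cal F}_{n-1}^+$.

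Granting the nonvanishing, the conclusion is immediate. Let $M\ne0$ be a submodule and let $2l_0$ be the least degree with $M(2l_0)={\cal F}_n^+(2l_0)\ne0$. If $l_0>0$, nonvanishing of $\pi_{nl}(\mathfrak{p}^-)$ on ${\cal F}_n^+(2l_0)$ forces $M(2l_0-1)\ne0$, contradicting minimality; hence $l_0=0$ and $M\supseteq{\cal F}_n^+(0)$. Then nonvanishing of $\pi_{nl}(\mathfrak{p}^+)$ together with $K$-irreducibility of each target gives ${\cal F}_n^+(2l)\subseteq M$ for all $l$ by induction, so $M={\cal F}_n^+$. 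Finally, note that every operator used -- the chain-connecting $\mathfrak{p}^{\pm}$, the copy of $\mathfrak{su}(2)\oplus\mathfrak{su}(2)$ giving the $K$-types, and a degree-detecting semisimple element such as $\bigl(\begin{smallmatrix}1&0\\0&-1\end{smallmatrix}\bigr)$ (with $1$ the identity of $\HC$), which acts by an affine function of $\deg$ with distinct eigenvalues on distinct ${\cal F}_n^+(2l)$ and so forces any submodule to be graded -- already lies in $\mathfrak{sl}(2,\HC)$. Hence the same analysis proves irreducibility over $\mathfrak{sl}(2,\HC)$, and a fortiori over $\mathfrak{gl}(2,\HC)$.
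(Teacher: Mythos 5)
Your proposal is correct and follows essentially the same route as the paper: reduce to ${\cal F}_n^{+}$ via the inversion isomorphism and the non-degenerate invariant pairing with ${\cal G}_n^{\mp}$, use the multiplicity-one $K$-type chain of Proposition \ref{K-types-prop} to conclude that any submodule is a sum of the graded pieces ${\cal F}_n^{\pm}(d)$, and verify that the degree-shifting operators $\pi_{nl}(\mathfrak{p}^{\pm})$ never vanish on a non-zero graded piece. The only minor divergence is in that last verification: where you propose an explicit evaluation on the extreme-weight monomials $\bigl(\begin{smallmatrix} 1 \\ 0 \end{smallmatrix}\bigr)^{\otimes n} z_{11}^{l+\nu} z_{21}^{l-\nu}$ (which does work --- all three terms of $\pi_{nl}\bigl(\begin{smallmatrix} 0 & 0 \\ C & 0 \end{smallmatrix}\bigr)$ with $C = \bigl(\begin{smallmatrix} 1 & 0 \\ 0 & 0 \end{smallmatrix}\bigr)$ contribute positive multiples of a single monomial), the paper instead notes that $\pi_{nl}\bigl(\begin{smallmatrix} 0 & B \\ 0 & 0 \end{smallmatrix}\bigr) = -\tr(B\partial)$ cannot annihilate a non-constant function and transfers this to $\mathfrak{p}^{+}$ by conjugating with $\bigl(\begin{smallmatrix} 0 & 1 \\ 1 & 0 \end{smallmatrix}\bigr)$, which swaps $\mathfrak{p}^{+}$ and $\mathfrak{p}^{-}$.
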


\begin{proof}
We will show that ${\cal F}_n^{\pm}$ are irreducible.
Then the irreducibility of ${\cal G}_n^{\pm}$ is immediate from
Propositions \ref{invariant} and \ref{orthogonality-nreg-prop}.
It is easy to see from the description of the $K$-types of ${\cal F}_n^{\pm}$
given by Proposition \ref{K-types-prop} that it is sufficient to show that
any non-zero vector in ${\cal F}_n^+(d)$ also generates non-zero vectors in
\begin{align*}
&{\cal F}_n^+(d-1) \qquad \text{as long as ${\cal F}_n^+(d-1) \ne \{0\}$
\qquad and} \\
&{\cal F}_n^+(d+1) \qquad \text{as long as ${\cal F}_n^+(d+1) \ne \{0\}$}
\end{align*}
(and similarly for ${\cal F}_n^-(d)$).
This follows from the observation that if $v_d \in {\cal F}_n^{\pm}(d)$, then
$$
\pi_{nl} \left(\begin{smallmatrix} 0 & B \\ 0 & 0 \end{smallmatrix}\right)
v_d \in {\cal F}_n^{\pm}(d-1), \qquad
\pi_{nl} \left(\begin{smallmatrix} 0 & 0 \\ C & 0 \end{smallmatrix}\right)
v_d \in {\cal F}_n^{\pm}(d+1),
$$
for each $\left(\begin{smallmatrix} 0 & B \\ 0 & 0 \end{smallmatrix}\right),
\left(\begin{smallmatrix} 0 & 0 \\ C & 0 \end{smallmatrix}\right)
\in \mathfrak{sl}(2,\HC)$,
Lemma \ref{Lie-alg-action} describing
$\pi_{nl} \left(\begin{smallmatrix} 0 & B \\ 0 & 0 \end{smallmatrix}\right)$
and the fact that conjugation by
$\left(\begin{smallmatrix} 0 & 1 \\ 1 & 0 \end{smallmatrix}\right)
\in GL(2,\HC)$ switches
$\left(\begin{smallmatrix} 0 & B \\ 0 & 0 \end{smallmatrix}\right)$
and $\left(\begin{smallmatrix} 0 & 0 \\ C & 0 \end{smallmatrix}\right)$.
\end{proof}

Note that the results of this section show that ${\cal F}_n^{\pm}$
and ${\cal G}_n^{\pm}$ are irreducible Harish-Chandra modules of a very special
kind -- they have highest or lowest weights.

\section{Unitary Structures on ${\cal F}_n^{\pm}$ and ${\cal G}_n^{\pm}$}  \label{unitary-section}

In this section we describe $\mathfrak{u}(2,2)$-invariant inner products
on $(\pi_{nl},{\cal F}_n^+)$, $(\pi_{nl},{\cal F}_n^-)$, $(\pi_{nr},{\cal G}_n^+)$
and $(\pi_{nr},{\cal G}_n^-)$.
We realize $U(2,2)$ as the subgroup of elements of $GL(2,\HC)$
preserving the Hermitian form on $\BB C^4$ given by the $4 \times 4$ matrix
$\bigl(\begin{smallmatrix} 1 & 0 \\ 0 & -1 \end{smallmatrix}\bigr)$.
Explicitly,
\begin{equation*}
U(2,2) = \Biggl\{ \begin{pmatrix} a & b \\ c & d \end{pmatrix};\:
a,b,c,d \in \HC,\:
\begin{matrix} a^*a = 1+c^*c \\ d^*d = 1+b^*b \\ a^*b=c^*d \end{matrix}
\Biggr\}.
%&= \Biggl\{ \begin{pmatrix} a & b \\ c & d \end{pmatrix};\:
%a,b,c,d \in \HC,\:
%\begin{matrix} a^*a = 1+b^*b \\ d^*d = 1+c^*c \\ ac^*=bd^* \end{matrix}
%\Biggr\}.
\end{equation*}
(Recall that $a^*$ and $d^*$ denote the matrix adjoints of $a$ and $d$ in the
standard realization of $\HC$ as $2 \times 2$ complex matrices.)
Then the  Lie algebra of $U(2,2)$ is
\begin{equation}  \label{u(2,2)-algebra}
\mathfrak{u}(2,2) = \Bigl\{
\begin{pmatrix} A & B \\ B^* & D \end{pmatrix} ;\: A,B,D \in \HC ,\:
A=-A^*, D=-D^* \Bigr\}.
\end{equation}

Recall from Subsection 2.3 of \cite{FL1} the $\BB C$-antilinear maps
$\BB S \to \BB S'$ and $\BB S' \to \BB S$
-- matrix transposition followed by complex conjugation.
Since they are similar to quaternionic conjugation,
we use the same symbol for these maps:
$$
\begin{pmatrix} s_1 \\ s_2 \end{pmatrix}^+ = (\overline{s_1}, \overline{s_2}),
\qquad
(s'_1,s'_2)^+ =\begin{pmatrix} \overline{s'_1} \\ \overline{s'_2}\end{pmatrix},
\qquad
\begin{pmatrix} s_1 \\ s_2 \end{pmatrix} \in \BB S ,\: (s'_1,s'_2) \in \BB S'.
$$
Note that
\begin{equation}  \label{spinor-conj}
(Zs)^+ = s^+ \bar Z^+, \qquad (s'Z)^+ = \bar Z^+ s'^+, \qquad
Z \in \HC,\: s \in \BB S,\: s' \in \BB S',
\end{equation}
where $\bar Z$ denotes complex conjugation relative to $\BB H$:
\begin{equation}  \label{complex-conj}
\overline{\begin{pmatrix} z_{11} & z_{12} \\ z_{21} & z_{22} \end{pmatrix}}
= \begin{pmatrix} \overline{z_{22}} & - \overline{z_{21}} \\
- \overline{z_{12}} & \overline{z_{11}} \end{pmatrix}.
\end{equation}
These conjugation maps extend to tensor products
$$
\underbrace{\BB S \otimes \cdots \otimes \BB S}_{\text{$n$ times}}
\qquad \text{and} \qquad
\underbrace{\BB S' \otimes \cdots \otimes \BB S'}_{\text{$n$ times}}
$$
in the most obvious way.
We introduce a map $\sigma: \hat {\cal F}_n \to \hat {\cal G}_n$ defined by
$$
\sigma(f)(Z) = f^+(\bar Z^+) = f^+(Z^*), \qquad \text{then} \qquad
\sigma^{-1}(g)(Z) = g^+(\bar Z^+) = g^+(Z^*).
$$
%By Lemma 7 from \cite{FL2},
Then $\sigma$ produces an isomorphism of {\em real} vector spaces
${\cal R}_n \to {\cal R}'_n$, we call its inverse $\sigma^{-1}$.
Clearly, $\sigma$ and $\sigma^{-1}$ map functions that are polynomial on
$\HC$ (respectively $\HC^{\times}$) into functions that are polynomial on
$\HC$ (respectively $\HC^{\times}$).
By Proposition \ref{basis-free-description}, $\sigma$ and $\sigma^{-1}$
restrict to isomorphisms of real vector spaces
$$
\sigma: {\cal F}_n^+ \to {\cal G}_n^+, \qquad
\sigma^{-1}: {\cal G}_n^+ \to {\cal F}_n^+, \qquad
\sigma: {\cal F}_n^- \to {\cal G}_n^-, \qquad
\sigma^{-1}: {\cal G}_n^- \to {\cal F}_n^-.
$$
From (\ref{spinor-conj})-(\ref{complex-conj}) and Lemma \ref{Lie-alg-action}
we obtain the following commutation relations between $\sigma$ and the Lie
algebra actions $\pi_{nl}$, $\pi_{nr}$ of $\mathfrak{gl}(2,\HC)$:
\begin{align}
\sigma \circ \pi_{nl} \bigl( \begin{smallmatrix} 0 & B \\ 0 & 0
\end{smallmatrix} \bigr) f
%&= \pi_{nr} \bigl( \begin{smallmatrix} 0 & \bar B^+ \\ 0 & 0 \end{smallmatrix}
%\bigr) \circ \sigma(f)
&= \pi_{nr} \bigl( \begin{smallmatrix} 0 & B^* \\ 0 & 0 \end{smallmatrix}
\bigr) \circ \sigma(f),  \label{sigma-B-comm}  \\
\sigma \circ \pi_{nl} \bigl( \begin{smallmatrix} 0 & 0 \\ C & 0
\end{smallmatrix} \bigr) f
%&= \pi_{nr} \bigl( \begin{smallmatrix} 0 & 0 \\ \bar C^+ & 0 \end{smallmatrix}
%\bigr) \circ \sigma(f)
&= \pi_{nr} \bigl( \begin{smallmatrix} 0 & 0 \\ C^* & 0 \end{smallmatrix}
\bigr) \circ \sigma(f),
\qquad B, C \in \HC.  \label{sigma-C-comm}
\end{align}

Let us consider pairings on ${\cal F}_n^{\pm}$ and ${\cal G}_n^{\pm}$:
\begin{align}
(f_1,f_2)_{{\cal R}_n} &= \frac1{2\pi^2}
\int_{Z \in S^3} f_2^+(Z) \cdot (\ddd^{-1} f_1)(Z) \,dS,
\qquad f_1, f_2 \in {\cal F}_n^{\pm},  \label{F-inn-prod}  \\
(g_1,g_2)_{{\cal R}'_n} &= \frac1{2\pi^2}
\int_{Z \in S^3} (\ddd^{-1} g_1)(Z) \cdot g_2^+(Z)\,dS,
\qquad g_1, g_2 \in {\cal G}_n^{\pm},  \label{G-inn-prod}
\end{align}
where $S^3 \subset \BB H \subset \HC$ is the unit sphere centered at the origin.
Clearly, these pairings are complex anti-linear, positive definite on
${\cal F}_n^+$ and ${\cal G}_n^+$, and either positive or negative definite
on ${\cal F}_n^-$ and ${\cal G}_n^-$, depending on the effect of the operator
$\ddd^{-1}$.

\begin{thm}  \label{unitary-thm}
The pairing (\ref{F-inn-prod}) is $\mathfrak{u}(2,2)$-invariant on
$(\pi_{nl}, {\cal F}_n^+)$ and $(\pi_{nl}, {\cal F}_n^-)$.
Similarly, the pairing (\ref{G-inn-prod}) is $\mathfrak{u}(2,2)$-invariant on
$(\pi_{nr}, {\cal G}_n^+)$ and $(\pi_{nr}, {\cal G}_n^-)$.
(The Lie algebra $\mathfrak{u}(2,2)$ is realized as a real form of
$\mathfrak{gl}(2,\HC)$ as in (\ref{u(2,2)-algebra}).)
\end{thm}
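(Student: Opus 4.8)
The plan is to reduce the $\mathfrak{u}(2,2)$-invariance of the sesquilinear forms (\ref{F-inn-prod})--(\ref{G-inn-prod}) to the already-established $\mathfrak{gl}(2,\HC)$-invariance of the \emph{bilinear} pairing $\langle\cdot,\cdot\rangle_{{\cal R}_n}$ (Proposition \ref{invariant}), using the conjugation $\sigma$ as the bridge between the two. I will carry this out for $(\pi_{nl},{\cal F}_n^{\pm})$; the case of $(\pi_{nr},{\cal G}_n^{\pm})$ is entirely parallel, obtained by exchanging the roles of $\pi_{nl},\pi_{nr}$ and of $\sigma,\sigma^{-1}$.

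First I would rewrite the Hermitian form (\ref{F-inn-prod}) as a value of the bilinear pairing. Writing $\iota=\bigl(\begin{smallmatrix} 0 & 1 \\ 1 & 0 \end{smallmatrix}\bigr)$ and $\hat\sigma:=\sigma\circ\pi_{nl}(\iota)$, I claim that
$$
(f_1,f_2)_{{\cal R}_n}=\bigl\langle f_1,\hat\sigma(f_2)\bigr\rangle_{{\cal R}_n},
\qquad f_1,f_2\in{\cal F}_n^{\pm}.
$$
To prove this I restrict $Dz$ to $S^3$ (where it becomes $Z\,dS$ by Lemma 6 of \cite{FL1}, taking $R=1$) and use the spinor-conjugation rule (\ref{spinor-conj}), $s^+W=(\bar Z^+s)^+$ with $\bar Z^+=Z^{-1}$ on $S^3$: a direct computation then gives $\hat\sigma(f_2)(Z)\cdot(Z\otimes\cdots\otimes Z)=\bigl(f_2(Z)\bigr)^+=f_2^+(Z)$, and substituting this into the integral form of $\langle\cdot,\cdot\rangle_{{\cal R}_n}$ (with $\ddd^{-1}$ acting on the first slot) yields the displayed identity. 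Note that $\hat\sigma$ is $\BB C$-antilinear and that, since $\pi_{nl}(\iota)$ interchanges ${\cal F}_n^+$ and ${\cal F}_n^-$ while $\sigma$ preserves the $\pm$ label, it maps ${\cal F}_n^{\pm}\to{\cal G}_n^{\mp}$; by Proposition \ref{orthogonality-nreg-prop} the bilinear pairing is nondegenerate between these spaces, so $(\cdot,\cdot)_{{\cal R}_n}$ is of the correct sesquilinear type.

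The decisive step is to show that $\hat\sigma$ intertwines the two Lie-algebra actions precisely along the real form, i.e. $\hat\sigma\circ\pi_{nl}(X)=\pi_{nr}(X)\circ\hat\sigma$ for all $X\in\mathfrak{u}(2,2)$. Starting from the commutation relations (\ref{sigma-B-comm})--(\ref{sigma-C-comm}) together with their diagonal-block analogues --- obtained by the same computation from (\ref{spinor-conj})--(\ref{complex-conj}) and Lemma \ref{Lie-alg-action}, namely $\sigma\circ\pi_{nl}\bigl(\begin{smallmatrix} A & 0 \\ 0 & 0 \end{smallmatrix}\bigr)=\pi_{nr}\bigl(\begin{smallmatrix} 0 & 0 \\ 0 & -A^* \end{smallmatrix}\bigr)\circ\sigma$ and $\sigma\circ\pi_{nl}\bigl(\begin{smallmatrix} 0 & 0 \\ 0 & D \end{smallmatrix}\bigr)=\pi_{nr}\bigl(\begin{smallmatrix} -D^* & 0 \\ 0 & 0 \end{smallmatrix}\bigr)\circ\sigma$ --- one finds $\sigma\circ\pi_{nl}(X)=\pi_{nr}(\theta X)\circ\sigma$, where $\theta$ is the $\BB C$-antilinear map $\theta\bigl(\begin{smallmatrix} A & B \\ C & D \end{smallmatrix}\bigr)=\bigl(\begin{smallmatrix} -D^* & B^* \\ C^* & -A^* \end{smallmatrix}\bigr)$. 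Conjugating by $\iota$ (using $\pi_{nl}(\iota)\pi_{nl}(X)=\pi_{nl}(\operatorname{Ad}(\iota)X)\pi_{nl}(\iota)$) gives $\hat\sigma\circ\pi_{nl}(X)=\pi_{nr}\bigl(\theta(\operatorname{Ad}(\iota)X)\bigr)\circ\hat\sigma$. A direct check shows that $\theta\circ\operatorname{Ad}(\iota)$ is exactly the antilinear involution of $\mathfrak{gl}(2,\HC)$ whose fixed-point set is the real form $\mathfrak{u}(2,2)$ of (\ref{u(2,2)-algebra}): for $X=\bigl(\begin{smallmatrix} A & B \\ B^* & D \end{smallmatrix}\bigr)$ with $A=-A^*$, $D=-D^*$ one computes $\operatorname{Ad}(\iota)X=\bigl(\begin{smallmatrix} D & B^* \\ B & A \end{smallmatrix}\bigr)$ and then $\theta(\operatorname{Ad}(\iota)X)=\bigl(\begin{smallmatrix} -A^* & B \\ B^* & -D^* \end{smallmatrix}\bigr)=X$, giving the desired intertwining.

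Finally I would combine the two steps. For $X\in\mathfrak{u}(2,2)$, using the $\mathfrak{gl}(2,\HC)$-invariance of $\langle\cdot,\cdot\rangle_{{\cal R}_n}$ in the form $\langle\pi_{nl}(X)\phi,\psi\rangle_{{\cal R}_n}+\langle\phi,\pi_{nr}(X)\psi\rangle_{{\cal R}_n}=0$ and the intertwining,
\begin{multline*}
(\pi_{nl}(X)f_1,f_2)_{{\cal R}_n}
=\langle\pi_{nl}(X)f_1,\hat\sigma f_2\rangle_{{\cal R}_n}
=-\langle f_1,\pi_{nr}(X)\hat\sigma f_2\rangle_{{\cal R}_n} \\
=-\langle f_1,\hat\sigma\pi_{nl}(X)f_2\rangle_{{\cal R}_n}
=-(f_1,\pi_{nl}(X)f_2)_{{\cal R}_n},
\end{multline*}
which is precisely the infinitesimal unitarity of (\ref{F-inn-prod}) on both ${\cal F}_n^+$ and ${\cal F}_n^-$. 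The main obstacle I anticipate lies in the first two steps: the careful bookkeeping needed to make the inversion $\iota$ and the tensor factor $Z\otimes\cdots\otimes Z$ cancel correctly on $S^3$, and --- more essentially --- the verification that the twist $\theta\circ\operatorname{Ad}(\iota)$ collapses to the identity exactly on $\mathfrak{u}(2,2)$, which is the structural reason the signature-$(2,2)$ real form (rather than some other real form of $\mathfrak{gl}(2,\HC)$) is the one that preserves the form. The right-handed statements for $(\pi_{nr},{\cal G}_n^{\pm})$ follow verbatim from the analogous identity $(g_1,g_2)_{{\cal R}'_n}=\langle\hat\sigma^{-1}(g_2)\,,g_1\rangle_{{\cal R}_n}$ and the same intertwining.
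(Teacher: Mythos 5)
Your proof is correct and follows essentially the same route as the paper's: both rewrite $(f_1,f_2)_{{\cal R}_n}$ as $\bigl\langle f_1,\sigma\circ\pi_{nl}\bigl(\begin{smallmatrix}0&1\\1&0\end{smallmatrix}\bigr)f_2\bigr\rangle_{{\cal R}_n}$ and then combine the $\mathfrak{gl}(2,\HC)$-invariance of the bilinear pairing (Proposition \ref{invariant}) with the commutation relations (\ref{sigma-B-comm})--(\ref{sigma-C-comm}). The only difference is minor: for the diagonal blocks of $\mathfrak{u}(2,2)$ the paper checks $U(2)\times U(2)$-invariance directly at the group level, whereas you extend the $\sigma$-commutation relations to the diagonal blocks and package everything into the single intertwining $\hat\sigma\circ\pi_{nl}(X)=\pi_{nr}(X)\circ\hat\sigma$ for $X\in\mathfrak{u}(2,2)$, which is consistent and makes explicit why this particular real form is the one preserved.
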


\begin{proof}
We will prove the invariance of (\ref{F-inn-prod}) only,
the other case is similar.
First, we relate (\ref{F-inn-prod}) to the bilinear pairing (\ref{pairing1}):
$$
(f_1,f_2)_{{\cal R}_n} = \bigl\langle f_1, \sigma \circ
\pi_{nl}\bigl(\begin{smallmatrix} 0 & 1 \\ 1 & 0 \end{smallmatrix}\bigr) f_2
\bigr\rangle_{{\cal R}_n}.
$$
Using the $\mathfrak{gl}(2,\HC)$-invariance of the bilinear pairing
(\ref{pairing1}) (Proposition \ref{invariant}) and relations
(\ref{sigma-B-comm})-(\ref{sigma-C-comm}), we find:
\begin{multline*}
\bigl( \pi_{nl} \bigl( \begin{smallmatrix} 0 & B \\ 0 & 0 \end{smallmatrix}
\bigr) f_1, f_2 \bigr)_{{\cal R}_n}
= \bigl\langle \pi_{nl} \bigl( \begin{smallmatrix} 0 & B \\ 0 & 0
\end{smallmatrix} \bigr)f_1, \sigma \circ
\pi_{nl}\bigl(\begin{smallmatrix} 0 & 1 \\ 1 & 0 \end{smallmatrix}\bigr) f_2
\bigr\rangle_{{\cal R}_n}  \\
= - \bigl\langle f_1, \pi_{nr} \bigl( \begin{smallmatrix} 0 & B \\ 0 & 0
\end{smallmatrix} \bigr) \circ \sigma \circ
\pi_{nl}\bigl(\begin{smallmatrix} 0 & 1 \\ 1 & 0 \end{smallmatrix}\bigr) f_2
\bigr\rangle_{{\cal R}_n}  \\
= - \bigl\langle f_1, \sigma \circ
\pi_{nl}\bigl(\begin{smallmatrix} 0 & 1 \\ 1 & 0 \end{smallmatrix} \bigr) \circ
\pi_{nl} \bigl( \begin{smallmatrix} 0 & 0 \\ B^* & 0 \end{smallmatrix} \bigr) f_2
\bigr\rangle_{{\cal R}_n}
= - \bigl( f_1, \pi_{nl} \bigl( \begin{smallmatrix} 0 & 0 \\ B^* & 0
\end{smallmatrix}\bigr) f_2 \bigr)_{{\cal R}_n}
\end{multline*}
and, similarly,
$$
\bigl( \pi_{nl} \bigl( \begin{smallmatrix} 0 & 0 \\ C & 0 \end{smallmatrix}
\bigr) f_1, f_2 \bigr)_{{\cal R}_n}
= - \bigl( f_1, \pi_{nl} \bigl( \begin{smallmatrix} 0 & C^* \\ 0 & 0
\end{smallmatrix}\bigr) f_2 \bigr)_{{\cal R}_n}.
$$
This proves the invariance of (\ref{F-inn-prod}) under
$\bigl(\begin{smallmatrix} 0 & B \\ B^* & 0 \end{smallmatrix}\bigr)
\in \mathfrak{u}(2,2)$, $B \in \HC$.
To show that the pairing (\ref{F-inn-prod}) is invariant under
$\bigl(\begin{smallmatrix} A & 0 \\ 0 & D \end{smallmatrix}\bigr)
\in \mathfrak{u}(2,2)$, $A,D \in \HC$, $A=-A^*$, $D=-D^*$,
one checks directly that (\ref{F-inn-prod}) is invariant under
the group $U(2) \times U(2)$ realized as (\ref{U(2)xU(2)}).
\end{proof}

\noindent
{\em Department of Mathematics, Yale University,
P.O. Box 208283, New Haven, CT 06520-8283}\\
{\em Department of Mathematics, Indiana University,
Rawles Hall, 831 East 3rd St, Bloomington, IN 47405}   

\end{document}